\newtheorem{thm}{Theorem}[section]
\newtheorem{lemma}[thm]{Lemma}
\newtheorem{cor}[thm]{Corollary}
\newtheorem{prop}[thm]{Proposition}
\theoremstyle{definition}
\newtheorem{defn}[thm]{Definition}
\def\CE{{\mathcal{E}}}
\newcommand{\spor}[1]{\operatorname{#1}}
\newcommand{\nspor}[1]{\!\operatorname{#1}}
\def\Z{{\mathbb Z}}
\def\F{{\mathbb F}}
\def\dim{\operatorname{dim}\nolimits}
\def\Res{\operatorname{Res}\nolimits}
\def\Ind{\operatorname{Ind}\nolimits}
\def\Inf{\operatorname{Inf}\nolimits}
\newcommand{\res}[2]{\!\downarrow^{#1}_{#2}}
\newcommand{\Indhg}[2]{\Ind_{#1}^{#2}}
\def\Hom{\operatorname{Hom}\nolimits}
\def\End{\operatorname{End}\nolimits}
\def\om{\Omega}
\newcommand{\ls}[2]{{^{#1}\!{#2}}}
\def\proj{\hbox{(proj)}}
\def\onto{\twoheadrightarrow}
\newcommand{\qbox}[1]{\quad\hbox{{#1}}\quad}
\newcommand{\wt}[1]{\widetilde{{#1}}}
\DeclareMathOperator{\GL}{GL} 
\DeclareMathOperator{\PSL}{L}   
\DeclareMathOperator{\PSU}{U}   
\DeclareMathOperator{\Syl}{Syl}    
\DeclareMathOperator{\Irr}{Irr}
\newcommand{\cO}{{\mathcal{O}}}
\newcommand{\fA}{{\mathfrak{A}}}
\newcommand{\fS}{{\mathfrak{S}}}
\newcommand{\IC}{{\mathbb{C}}}
\newcommand{\IZ}{{\mathbb{Z}}}
\newcommand{\tw}[1]{{}^#1\!}
\newcommand{\Atlas}{{\sc Atlas}}
\title{Endotrivial modules for the sporadic simple groups and their covers}
\author{Caroline Lassueur}
\address{Caroline Lassueur, FB Mathematik, TU Kaisers\-lautern, Post\-fach
  3049, 67653 Kaisers\-lautern, Germany} 
\email{lassueur@mathematik.uni-kl.de}  
\author{Nadia Mazza}
\address{Nadia Mazza, Department of Mathematics and Statistics, Lancaster
  University, Lancaster, LA1 4YF, UK}
\email{n.mazza@lancaster.ac.uk}    
\date{\today}
\subjclass[2010]{Primary 20C20; Secondary 20C34}
\thanks{The first author gratefully acknowledges financial support by ERC
  Advanced Grant 291512 and SNF Fellowship for Prospective Researchers PBELP2$_{-}$143516}
\begin{document}

\begin{abstract}
In a step towards the classification of endotrivial modules for
quasi-simple groups, we investigate endotrivial modules for the sporadic simple
groups and their covers.
A main outcome of our study is the existence of torsion endotrivial modules with dimension greater than one for several sporadic groups with $p$-rank greater than one. 
\end{abstract}

\maketitle

\pagestyle{myheadings}
\markboth{C. Lassueur \& N. Mazza}{Endotrivial modules for sporadic groups and their covers}


\section{Introduction}\label{sec:introduction}

Let $G$ be a finite group and $k$ a field of prime characteristic $p$
dividing the order of $G$. A $kG$-module $V$ is called endotrivial if
$V\otimes V^{*}\cong k\oplus Q$, with $Q$ a projective $kG$-module. The
tensor product over $k$ induces a group structure on the set of
isomorphism classes of indecomposable endotrivial $kG$-modules, called
the group of endotrivial modules and denoted $T(G)$. This group is
finitely generated and it is of particular
interest in modular representation theory as it forms an important part
of the Picard group of self-equivalences of the stable category of
finitely generated $kG$-modules. In particular the self-equivalences of
Morita type are induced by tensoring 
with endotrivial modules. 

Endotrivial modules have seen a considerable interest since defined by
Dade in 1978 in \cite{DADE1,DADE2} where he gives a classification for
abelian $p$-groups. In this case $T(G)$ is cyclic and generated by the module
$\Omega(k)$. Since then a full classification has been obtained
over $p$-groups (see \cite{CARLSON_ETSURVEY} and the references
therein). Contributions towards a general classification of endotrivial
modules  have already been obtained for  several families of general
finite groups 
(\cite{CMN1,CMN, CHM, MT, CMTpsol, CMT2,LMS,NAVROB,ROBsimple}), 
however the problem of computing $T(G)$ for an arbitrary finite group $G$
remains open in general. 
In particular the problem of determining the structure of the torsion
subgroup $TT(G)$ of $T(G)$ is the hardest part. In
\cite[Thm. A]{CARLSON_ETSURVEY} J. Carlson states that in the known
cases, but for few exceptions, $TT(G) = X(G)$, the group of
one-dimensional $kG$-modules. 

In this article, we study endotrivial modules for the sporadic simple groups and
their covering groups, i.e. perfect groups $G$ such that $G/Z(G)$ is
sporadic simple, in all characteristics $p$ dividing the order of
$G$. As a consequence we obtain a significant list of new examples where
$TT(G)$ is non-trivial, and a fortiori not isomorphic to $X(G)$. 

Our first main result is the determination of the torsion-free rank of
$T(G)$ for all covering groups $G$ of sporadic groups (see 
Proposition~\ref{prop:TFchar2} and Section~\ref{sec:TF}). Our second main
result is the full structure of $T(G)$ for all covering groups of
sporadic groups in characteristic~$2$ (see Section~\ref{sec:sporadic2}). 
Our third main result concerns
proper covering groups, i.e. cases where $Z(G)$ is non-trivial: in any
such case,  we either prove that $T(G)\cong T(G/Z(G))$ via inflation, or
compute $T(G)$ by explicit arguments (see Section~\ref{sec:sporadic2}
 and Section~\ref{sec:TT}).
Our fourth main result is the full structure of the group $T(G)$ in all
characteristics where the Sylow $p$-subgroups are cyclic (see Table~\ref{tbl:cyclic}), although in
this case the main theoretical ingredients are provided by \cite{MT}. 

At this point we are left with the computation of the torsion subgroup
$TT(G)$ in odd characteristic $p$ such that  $p^{2}$ divides the order
of $G$. In this paper we determine the full structure of $TT(G)$ in all 
characteristics for the following groups: $\spor{M}_{11}$, $\spor{M}_{12}$, 
$\spor{J}_{1}$, $\spor{M}_{22}$, $\spor{J}_{2}$, $\spor{M}_{23}$,
$\tw{2}\spor{F}_{4}(2)'$, $\spor{HS}$, $\spor{M}_{24}$,
$\spor{McL}$, $\spor{Suz}$, $\spor{Fi}_{22}$, $\spor{HN}$, $\spor{Ly}$, $\spor{J}_{4}$ and their covers.
In the remaining cases, we do not obtain the full structure of $TT(G)$ for all 
$p\,|\,|G|$, but we give as much information as possible. Typically, in most missing cases we can 
provide an upper bound in terms of abelian groups for $TT(G)$.

Our methods combine on the one hand traditional modular representation 
theory techniques such as the theory of vertices and sources 
(see \cite{CARLSON_ETSURVEY}) and on the other hand character
theory. More specifically we use the well-known fact that trivial source modules 
lift uniquely to characteristic zero and use the newly obtained characterisation 
of the torsion subgroup $TT(G)$ via ordinary characters of the group $G$  
(see Theorem~\ref{lem:Greenchars}). In particular this approach allows us to compute Green 
correspondence  at the level of ordinary characters, rather than at the level of modules, 
allowing us to make computer calculations via the GAP Character Table Libraries 
even for large sporadic groups. Whenever possible we provide the characters 
afforded by trivial source endotrivial module, so that using the known decomposition 
matrices, one can recover the composition factors of these modules. 
In addition, we rely on a recently developed approach by Balmer (\cite{Balmer}) 
which uses weak $H$-homomorphisms. Leaving aside the details, Balmer's 
method is useful in helping to find the trivial source endotrivial modules, though it
does not provide a complete answer in general.

Our paper is organised as follows. In Sections~\ref{sec:et} and
\ref{sec:sporadic}, we sum up useful results on endotrivial modules and
the sporadic groups. 
In Section~\ref{sec:sporadic2}, we determine the structure of $T(G)$
in characteristic $2$. For odd characteristics, in Section~\ref{sec:TF}
we determine the torsion-free rank of $T(G)$, and in
Section~\ref{sec:TT} the torsion subgroup $TT(G)$ when we have obtained its full
structure. We collect the results in Section~\ref{sec:tables} in
Table~\ref{tbl:sumup} if $p^{2}\,|\,|G|$ and in Table~\ref{tbl:cyclic}
if the Sylow $p$-subgroups are cyclic.\\

\textbf{Acknowledgements.}
The first author would like to thank Prof. Dr. Gunter Malle and the FB Mathematik of the TU
Kaiserslautern for hosting her as an SNF Fellow during the academic year
2013-2014, as well as the department of Mathematics of the University of
Lancaster for supporting her visit in October 2013.
The second author is thankful to the FB Mathematik of the TU
Kaiserslautern for hosting her for a short visit in Spring 2013, during
which this project started. 
Both authors are indebted to Jon Carlson for his help with
computer calculations. Finally note that the present version of the paper contains corrections to the 
proofs of Lemma 3.1(2), and Lemma 3.2(1), thanks to Gunter Malle who noticed errors.\\


\section{Endotrivial modules: background results}\label{sec:et}

Throughout, unless otherwise stated, we let $G$ be a finite group, $P$ a Sylow
$p$-subgroup of $G$, for some prime $p$ dividing the order of $G$, and
$k$ an algebraically closed field of characteristic $p$. We
write $N_G(P)$ for the normaliser of $P$ in $G$ and we adopt the same 
notation and conventions as in \cite{THEVbook}. In particular, all $kG$-modules 
are left modules and finitely generated. In addition, for a $kG$-module $M$, we 
put $M^*=\Hom_k(M,k)$ for the $k$-dual of $M$ and $\otimes=\otimes_k$. 
Also, $X(G)$ denotes the group of isomorphism classes of 1-dimensional $kG$-modules.

\subsection{Definitions and elementary properties}\label{ssec:defET}

\begin{defn}\label{def:etmodule}
A $kG$-module $M$ is {\em endotrivial} if $\End_k(M)\cong
k\oplus\proj$ as $kG$-modules, where (proj) denotes some projective
$kG$-module. 
\end{defn}

Here are the main basic properties of endotrivial modules.

\begin{lemma}\label{basics}
Let $M$ be a $kG$-module.
\begin{enumerate}
\item $M$ is endotrivial if and only if $M$ splits as the direct sum $M_0\oplus\proj$
for an indecomposable endotrivial $kG$-module $M_0$, unique up to
isomorphism. 
\item\label{item2} The relation
$$M\sim N~\Longleftrightarrow~M_0\cong N_0$$
on the class of endotrivial $kG$-modules is an equivalence relation and
every equivalence  class contains a unique indecomposable module up to
isomorphism. 
\item \label{item3}
If $M$ is endotrivial, then $M\res GH$ is endotrivial for
  any subgroup $H$ of $G$. Moreover if $H\geq P\in\Syl_{p}(G)$, then 
  $M$ is endotrivial if and only if $M\res GH$ is.
\item $M$ is endotrivial if and only if $M^*$ is endotrivial.
\item Given two endotrivial $kG$-modules $M,N$, then $M\otimes N$ is
  endotrivial. 
\item\label{item6} Let $\xymatrix{
0\ar[r]&M\ar[r]&Q\ar[r]&N\ar[r]&0}$ be a short exact sequence of
$kG$-modules with $Q$ projective. Then $M$ is endotrivial if and only if
$N$ is endotrivial.
\item If $M$ is endotrivial, then 
$$\dim_{k}(M)\equiv\left\{\begin{array}{ll} 
\pm 1\pmod{|G|_{p}}& \qbox{if $p$ is odd}\\
 \pm 1\pmod{\frac{1}{2}|G|_{2}}&\qbox{if $p=2$}
\end{array}\right.$$
where $|G|_p$ is the order of a Sylow $p$-subgroup of $G$. Also, if $M$ 
is moreover a trivial source module, then $\dim_{k}(M)\equiv 1\pmod{|G|_{p}}$. 
\end{enumerate}
\end{lemma}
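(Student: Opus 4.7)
The plan is to dispatch the seven items in order, relying on Krull--Schmidt and the standard behaviour of $\End_k(-)$, $(-)^*$, and $\otimes_k$ under restriction and duality. The only genuinely non-routine step is the dimension formula in (7), which I would reduce to the Sylow subgroup.

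For (1), I would work in the stable category $\stmod(kG)$, where the endotrivial hypothesis reads $M\otimes M^{*}\cong k$, i.e., $M$ is $\otimes$-invertible with inverse $M^{*}$. Since the tensor identity $k$ is indecomposable in $\stmod(kG)$, so is the invertible object $M$; equivalently, in $\modg(kG)$ the module $M$ decomposes as $M_0\oplus(\mathrm{proj})$ with $M_0$ the unique indecomposable non-projective summand, determined up to isomorphism by Krull--Schmidt. Item (2) is then immediate: the relation $\sim$ is formally an equivalence relation, and the unique indecomposable representative in each equivalence class is the $M_0$ from (1).

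For items (3)--(5), the key functorial facts are that $\Res^G_H$ commutes with $\End_k$, $(-)^*$, and $\otimes_k$, and sends projective $kG$-modules to projective $kH$-modules; this gives the ``only if'' direction of (3), while the converse when $H\geq P\in\Syl_p(G)$ uses the classical principle that projectivity of a $kG$-module is detected on any such $H$. For (4), $\End_k(M^*)\cong(\End_k M)^*\cong k\oplus Q^*$ with $Q^*$ projective. For (5), expand $(M\otimes N)\otimes(M\otimes N)^*\cong(M\otimes M^*)\otimes(N\otimes N^*)$ and use that tensoring any module with a projective yields a projective. For (6), the projectivity of $Q$ in the short exact sequence forces $M\cong\Omega N$ in $\stmod(kG)$, and since $\Omega$ preserves endotriviality on $\stmod(kG)$ (from $\Omega A\otimes(\Omega A)^{*}\cong A\otimes A^{*}$ stably), $M$ is endotrivial iff $N$ is.

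The main effort is (7), which I would reduce to a Sylow $p$-subgroup $P$ via (3). Over a $p$-group, the classical dimension formula for endotrivial modules gives $\dim\equiv\pm 1\pmod{|P|}$ when $p$ is odd and $\dim\equiv\pm 1\pmod{|P|/2}$ when $p=2$; this can be proved by lifting $M$ to an $\cO P$-lattice $L$ with $L\otimes L^{*}\cong\cO\oplus(\mathrm{free})$, forcing $|\chi_L(g)|^{2}=1$ for every non-identity $g\in P$, and then extracting the congruence on $\chi_L(1)=\dim M$. For the trivial source refinement, $M\res{G}{P}$ is trivial source and endotrivial, and I would invoke the standard fact that the only indecomposable trivial source endotrivial $kP$-module is $k$ itself, giving $M\res{G}{P}\cong k\oplus(\mathrm{proj})$ and hence $\dim M\equiv 1\pmod{|P|}$. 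The hardest step will be the $p$-group dimension formula, especially in the $p=2$ case where torsion endotrivial modules of ``half-dimension'' can occur.
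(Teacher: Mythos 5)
The paper does not actually prove Lemma~\ref{basics}: it defers all seven parts to \cite{CARLSON_ETSURVEY} and the references therein. So there is no in-paper argument to compare against; judged on its own, your proposal is a correct reconstruction of the standard proofs, and it is the route the cited sources take (Krull--Schmidt plus the monoidal formalism of $\stmod(kG)$ for (1)--(6), reduction to a Sylow subgroup for (7)).

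Two points deserve a little more care than your sketch gives them. First, in the converse direction of (3): knowing $\End_k(M)\res GH\cong k\oplus\proj$ and that projectivity is detected on $H\geq P$ tells you every indecomposable $kG$-summand of $\End_k(M)$ except one is projective, but you still need that the remaining summand is $k$ itself, i.e.\ that the trivial module splits off $M\otimes M^*$ \emph{as a $kG$-module}. This follows from the Benson--Carlson criterion ($k\mid M\otimes M^*$ over $kG$ if and only if $p\nmid\dim_k M$, and the hypothesis forces $(\dim_k M)^2\equiv 1\pmod{|P|}$), or from your own observation in (1) that $A\otimes A^*$ projective forces $A$ projective via the unit--counit identities; either way it should be said. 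Second, for the congruence in (7) you do not need $\cO P$-lattices or characters at all: $\dim_k\End_k(M)=(\dim_k M)^2=1+|G|_p\cdot m$ because every projective $kG$-module has dimension divisible by $|G|_p$, and the solutions of $x^2\equiv 1\pmod{p^n}$ are $x\equiv\pm1\pmod{p^n}$ for $p$ odd but only $x\equiv\pm1\pmod{2^{n-1}}$ for $p=2$ (since $(\IZ/2^n)^\times$ is not cyclic); this is exactly the asserted dichotomy, so the $p=2$ case you flag as hard is in fact elementary. Your argument for the trivial source refinement is correct: the indecomposable trivial source $kP$-modules are the transitive permutation modules $k[P/Q]$, and of these only $k$ has dimension prime to $p$, whence $M\res GP\cong k\oplus(kP)^m$.
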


For proofs of Lemma~\ref{basics}(1)-(7), we refer the reader to 
\cite{CARLSON_ETSURVEY} and the references therein.
Part~(\ref{item2}) is equivalent to saying that  two endotrivial modules
$M,N$ are equivalent if and only is they are isomorphic in the stable
module category of~$kG$. This leads us to the following definition: 

\begin{defn}\label{def:tg}
The set $T(G)$ of equivalence classes of endotrivial $kG$-modules
resulting from the relation $\sim$ of Lemma~\ref{basics}(\ref{item2})
forms an abelian group for the composition law
$$[M]+[N]:=[M\otimes N].$$
The zero element of $T(G)$ is the class~$[k]$ of the trivial module, and the additive
inverse of $[M]$ is $-[M]=[M^*]$. 
The group $T(G)$ is called the {\em group of endotrivial $kG$-modules}.
\end{defn}

If $P_*\onto M$ is a minimal projective resolution of a $kG$-module $M$,
we denote by $\Omega^n(M)$ the $(n-1)$-st kernel of the differentials, and
similarly $\Omega^{-n}(M)$ is the $(n-1)$-st cokernel in an injective
resolution $M\hookrightarrow I_*$ of $M$. By convention, $\Omega^0(M)$
is the projective-free part of $M$ and $\Omega(M)=\Omega^1(M)$. 
Thus part~(\ref{item6}) in Lemma~\ref{basics} provides us with an important
family of endotrivial modules, namely the modules $\Omega^n(k)$ for $n\in \IZ$.
Henceforth we write $\om$ for the equivalence class of $\om(k)$
in $T(G)$.  For $m,n\in\IZ$, $\Omega^{n}(k)\otimes\Omega^{m}(k)\cong
\Omega^{m+n}(k)\oplus \proj$, hence $\left <\Omega\right>\leq T(G)$ is a
cyclic subgroup, and, unless $G$ has $p$-rank~one, $\left
  <\Omega\right>\cong \IZ$. 

\subsection{Previously known results}\label{ssec:prevresults}

The abelian group $T(G)$ is known to be finitely generated \cite[Cor. 2.5]{CMN1} and so we can
write 
$$T(G)=TT(G)\oplus TF(G)$$
where $TT(G)$ is the torsion subgroup of $T(G)$ and $TF(G)$ is a torsion-free direct 
sum complement of $TT(G)$ in
$T(G)$.  In particular $TT(G)$ is finite. 
The rank of $TF(G)$ is the {\em torsion-free rank} of $T(G)$ and it is
written $n_G$. It is determined in~\cite{CMN1}.

\begin{lemma}[\cite{CMN1}]\label{lem:ranktf}
The torsion-free rank of $T(G)$ is equal to the number of
conjugacy classes of maximal elementary abelian $p$-subgroups of rank
$2$ if $G$ has $p$-rank $2$, or that number plus one if $G$ has $p$-rank
greater than $2$. 
\end{lemma}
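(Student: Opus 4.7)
The plan is to reduce to elementary abelian $p$-subgroups via a detection theorem, invoke Dade's classification there, and then compute the rank of the image of the resulting restriction map. Concretely, a detection theorem of Puig/Dade--Alperin type asserts that a $kG$-module $M$ is endotrivial if and only if $\Res^{G}_{E}M$ is endotrivial for every elementary abelian $p$-subgroup $E\leq G$, and moreover that two endotrivial $kG$-modules are equivalent in $T(G)$ if and only if all such restrictions agree in $T(E)$. This produces an injection
$$\rho\colon T(G)\hookrightarrow \bigoplus_{[E]}T(E),$$
with $[E]$ running over $G$-conjugacy classes of elementary abelian $p$-subgroups.

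By Dade's classification of $T(E)$ for $E$ elementary abelian, the group $T(E)$ is torsion when $E$ has rank~$1$, while $T(E)\cong\IZ$, generated by $\Omega_{E}$, when $E$ has rank $\geq 2$. Moreover, for an inclusion $E\subseteq E'$ of elementary abelians both of rank $\geq 2$, the restriction $T(E')\to T(E)$ sends the generator $\Omega_{E'}$ to $\Omega_{E}$ and is therefore the identity $\IZ\to\IZ$. Hence the coordinates of $\rho(M)$ on non-maximal rank-$\geq 2$ subgroups are redundant, and the torsion-free rank of $T(G)$ equals the $\IZ$-rank of the image of the composite
$$T(G)\xrightarrow{\rho}\bigoplus_{[E]}T(E)\twoheadrightarrow \bigoplus_{i=1}^{s}T(E_{i})\cong\IZ^{s},$$
where $E_{1},\dots,E_{s_{2}}$ represent the conjugacy classes of maximal elementary abelian $p$-subgroups of rank exactly~$2$, and $E_{s_{2}+1},\dots,E_{s}$ those of rank $\geq 3$.

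Now count. The class $\Omega\in T(G)$ restricts to $\Omega_{E_{i}}$ in every factor and so maps to the diagonal vector $(1,\dots,1)\in\IZ^{s}$. For each rank-$2$ class $E_{i}$, Carlson's relative syzygy yields an endotrivial module whose image is a nonzero multiple of $\Omega_{E_{i}}$ on the $i$-th coordinate and zero elsewhere: since $E_{i}$ is maximal of rank~$2$, it is not $G$-subconjugate to any other maximal $E_{j}$, so the rank variety of this relative syzygy avoids $V_{E_{j}}$ and its restriction to $E_{j}$ is projective. The resulting $s_{2}$ vectors are supported on the first $s_{2}$ coordinates and are $\IZ$-linearly independent. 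If $s=s_{2}$ (the $p$-rank of $G$ equals~$2$), the diagonal vector already lies in their $\Q$-span, so the image has $\IZ$-rank $s_{2}$. If $s>s_{2}$ (the $p$-rank of $G$ is $\geq 3$), the diagonal vector has a nonzero entry outside the support of the relative syzygies, forcing the rank to be $s_{2}+1$.

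The main obstacle is matching this lower bound by an upper bound on the rank of $\rho(T(G))$, namely that no extra independent directions occur in the rank-$\geq 3$ coordinates. This is handled by applying the isomorphism $T(E_{j})\xrightarrow{\sim} T(F)$ of the second paragraph to any rank-$2$ subgroup $F\subset E_{j}$, and exploiting that all rank-$\geq 3$ maximal elementary abelians are linked, through chains of such rank-$2$ intersections and $G$-conjugation, to a single ``degree'' coordinate tied to $\Omega$; hence no further generators can appear.
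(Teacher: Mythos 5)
The paper itself gives no proof of this lemma --- it is quoted from \cite{CMN1} --- so the comparison has to be with the argument given there. Your outline follows that argument: restrict to elementary abelian subgroups to kill torsion, use Dade's computation of $T(E)$ and the compatibility of the classes $\Omega_E$ under inclusion, obtain the lower bound from $\Omega$ together with the Alperin--Carlson relative syzygies attached to the maximal rank-$2$ classes, and obtain the upper bound from the connectivity of the poset $\catel{G}$ of elementary abelian subgroups of rank at least $2$. So the strategy is the right one.

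Three steps, however, are not correct as written. First, $\rho$ is \emph{not} injective: every non-trivial one-dimensional $kG$-module lies in its kernel, as does any non-trivial trivial source endotrivial module --- for instance the order-two class in $T(\spor M_{11})$ at $p=2$ of Section~\ref{sec:mathieu2} restricts trivially to every elementary abelian subgroup. What you need, and what is true, is only that $\ker\rho$ is finite (because $\ker(T(G)\to T(P))$ embeds into the finite group $X(N_G(P))$ and $\ker(T(P)\to\prod_E T(E))=TT(P)$), so that $\rho$ preserves the torsion-free rank. Second, an endotrivial module never restricts to a projective module on a non-trivial $p$-subgroup: its restriction to $E_j$ is of the form $\Omega^{n}(k)\oplus\proj$, so the intended claim is that its class in $T(E_j)$ is \emph{zero}, and the rank-variety reasoning you offer does not establish this; computing how the relative syzygies restrict to each $E_j$ is exactly the content of Alperin's construction and cannot be waved through (though one only needs the matrix of restrictions to have the right rank, not that each generator is supported on a single coordinate). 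Third, and most importantly, the upper bound in the rank~$\geq 3$ case is the substantive theorem: one must show that all maximal elementary abelian subgroups of rank $\geq 3$, together with every non-maximal member of $\catel{G}$, lie in a single connected component of $\catel{G}$, so that the corresponding coordinates of $\rho(M)$ all coincide. You assert this linkage in one sentence without argument; it is a non-trivial group-theoretic fact (of the same flavour as Theorem~\ref{thm:connected}) and is precisely where \cite{CMN1} does the work. As it stands, the proposal is a correct roadmap whose two decisive inputs are left unproved.
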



The next three results characterising the torsion-free rank of $T(G)$ are at the crossroad 
of local group structure and representation theory. 

\begin{thm}[\cite{GM,MacW}]\label{thm:connected}
Let $G$ be a finite $p$-group. Suppose that $G$ has a maximal elementary
abelian $p$-subgroup of order $p^2$. Then $G$ has $p$-rank at most $p$
if $p$ is odd, or at most $4$ if $p=2$.
\end{thm}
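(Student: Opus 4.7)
The plan is to exploit the existence of a maximal elementary abelian subgroup $E$ of rank~$2$ to constrain the structure of $G$ enough to bound its $p$-rank.

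The first step is a simple local observation: every element of order $p$ in $C_{G}(E)$ must already lie in $E$, since otherwise it would together with $E$ generate an elementary abelian subgroup strictly larger than $E$, contradicting maximality. Thus $\Omega_{1}(C_{G}(E)) = E$, and in particular $\Omega_{1}(Z(G)) \leq E$ has rank at most~$2$. The same argument applied to any elementary abelian $A \leq G$ of maximum rank shows that $\Omega_{1}(C_{G}(A)) = A$, i.e.\ such an $A$ is self-centralising. Moreover, for any such $A$, the fixed points $C_{A}(E)$ lie in $A \cap E$ and so have rank at most~$2$.

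The second step, which is the heart of the argument, is to bound $\rk{A}$ itself, by leveraging the interplay between $A$ and $E$ through the conjugation action on $\langle A, E \rangle$ together with the embedding $N_{G}(E)/C_{G}(E) \hookrightarrow \Aut(E) \cong \GL_{2}(\F_{p})$. For $p$ odd, the Sylow $p$-subgroup of $\GL_{2}(\F_{p})$ is cyclic of order~$p$, and a careful analysis carried out by Glauberman and Mazza~\cite{GM} uses this to produce the bound $\rk{G} \leq p$. For $p = 2$ the corresponding Sylow has order only~$2$, and a finer combinatorial case analysis due to MacWilliams~\cite{MacW} is needed to obtain the sharper bound $\rk{G} \leq 4$.

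The main obstacle is precisely this second step: the purely local constraint $\Omega_{1}(C_{G}(E)) = E$ does not by itself bound $\rk{G}$, since as an abstract $\F_{p}E$-module an elementary abelian section of $G$ can have small fixed points but very large dimension; it takes substantial combinatorial work to rule out such configurations inside a $p$-group with the maximality hypothesis on~$E$. The $p = 2$ case is particularly intricate, as one must classify the possible embeddings of a rank-$4$ elementary abelian subgroup into a $2$-group containing a maximal rank-$2$ elementary abelian, and I would defer to the cited works for the full technical execution.
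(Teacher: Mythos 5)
The paper offers no proof of this statement: it is quoted verbatim from \cite{GM,MacW} (Glauberman--Mazza for $p$ odd, MacWilliams's four-generator theorem for $p=2$), which is exactly what your proposal does after its preliminary reductions. Those reductions are correct --- maximality of $E$ gives $\Omega_1(C_G(E))=E$, hence $\Omega_1(Z(G))\leq E$ and $C_A(E)\leq A\cap E$ for any elementary abelian $A$ of maximal rank, and the Sylow $p$-subgroup of $\GL_2(\F_p)$ is indeed cyclic of order $p$ --- and your deferral of the substantive combinatorial argument to the cited sources coincides with the paper's own treatment, so there is nothing further to compare.
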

For completeness, the case $p=2$ is due to A. MacWilliams (often coined
the {\em four-generator theorem}), and for odd
primes it is work of G. Glauberman and the second author.

\begin{cor}\label{cor:torsionfreerank1}
Assume the $p$-rank of the group $G$ is greater than $p$ if $p$ is odd, 
or greater than $4$ if $p=2$. Then $T(G)$ has torsion-free rank one.
\end{cor}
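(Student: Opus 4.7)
The plan is to combine Lemma~\ref{lem:ranktf} with the contrapositive of Theorem~\ref{thm:connected}. By Lemma~\ref{lem:ranktf}, since $G$ has $p$-rank at least $p+1\geq 3$ (or at least $5$ when $p=2$), the torsion-free rank is
\[
n_{G} = m + 1,
\]
where $m$ is the number of $G$-conjugacy classes of maximal elementary abelian $p$-subgroups of $G$ of rank~$2$. So it suffices to show that $m=0$, i.e.\ that $G$ has no maximal elementary abelian $p$-subgroup of rank~$2$.

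Suppose, for contradiction, that $E\leq G$ is a maximal elementary abelian $p$-subgroup with $|E|=p^{2}$. Fix a Sylow $p$-subgroup $P$ of $G$ containing $E$. The key observation is that $E$ remains maximal among elementary abelian $p$-subgroups of $P$: any elementary abelian subgroup $E'\leq P$ properly containing $E$ would also be an elementary abelian $p$-subgroup of $G$ strictly containing $E$, contradicting the maximality of $E$ in $G$. Hence $P$ possesses a maximal elementary abelian subgroup of order $p^{2}$.

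Theorem~\ref{thm:connected} then forces $P$ to have $p$-rank at most $p$ when $p$ is odd, or at most $4$ when $p=2$. But the $p$-rank of $G$ equals the $p$-rank of $P$, which by hypothesis is strictly greater than $p$ (resp.\ strictly greater than~$4$), a contradiction. Therefore $m=0$ and $n_{G}=1$, as claimed.

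I do not expect any serious obstacle: the only subtlety worth flagging is the routine but essential point that maximality of $E$ inside $G$ transfers to maximality inside any Sylow $p$-subgroup containing $E$, which is what allows us to invoke the $p$-group statement of Theorem~\ref{thm:connected}.
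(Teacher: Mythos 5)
Your proof is correct and is exactly the intended argument: the paper states this corollary without proof, as an immediate consequence of combining Lemma~\ref{lem:ranktf} with (the contrapositive of) Theorem~\ref{thm:connected} applied to a Sylow $p$-subgroup. The one point you flag -- that maximality of $E$ among elementary abelian $p$-subgroups of $G$ implies maximality inside any Sylow $p$-subgroup containing it -- is the right detail to check, and your justification of it is sound.
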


\begin{thm}[\cite{CARLSONelab,MAZposet}]\label{thm:max-tf}
Let $G$ be a finite $p$-group. Then $n_G\leq p+1$ if $p$ is odd, or at
most $n_G\leq 5$ if $p=2$. Moreover, both upper bounds are optimal.
\end{thm}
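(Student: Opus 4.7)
The plan is to leverage Lemma~\ref{lem:ranktf}, which reduces the determination of $n_{G}$ to counting conjugacy classes of maximal elementary abelian subgroups of rank~$2$. Denote this count by $c(G)$; then $n_{G} = c(G)$ when the $p$-rank of $G$ equals~$2$ and $n_{G} = c(G)+1$ otherwise, so the two asserted bounds translate into upper bounds on $c(G)$ depending on the $p$-rank of $G$.

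I would stratify by $p$-rank. If the $p$-rank exceeds $p$ (respectively $4$), Theorem~\ref{thm:connected} forbids any maximal elementary abelian subgroup of rank~$2$, hence $c(G)=0$ and $n_{G}=1$. If the $p$-rank equals~$2$, every maximal elementary abelian subgroup already has rank~$2$, and the maximal rank-$2$ elementary abelian subgroups of $G$ coincide with the rank-$2$ elementary abelian subgroups of the characteristic subgroup $\Omega_{1}(G)$. For odd $p$ the structure theorem for exponent-$p$ groups of $p$-rank $2$ forces $\Omega_{1}(G)$ to be either elementary abelian (so $c(G)=1$) or extraspecial of order $p^{3}$ (so $c(G)=p+1$); the latter already realises the bound $n_{G}=p+1$.

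The substantive case is the intermediate range in which the $p$-rank of $G$ lies between $3$ and $p$ (respectively between $3$ and $4$); here one must show $c(G)\leq p$ (respectively $c(G)\leq 4$). I would work inside the normal subgroup $\Omega_{1}(Z_{2}(G))$, which meets every maximal elementary abelian subgroup non-trivially, and count $G$-orbits on the maximal rank-$2$ elementary abelian subgroups via the induced action of $G/C_{G}(\Omega_{1}(Z_{2}(G)))$. The order of this quotient is tightly constrained by Theorem~\ref{thm:connected}, and a case analysis along the lines of the poset-theoretic study of elementary abelian subgroups in \cite{MAZposet} pins down $c(G)$.

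The hardest step is this enumeration in the intermediate range: the relevant $p$-groups are not $2$-generated in general, so the fusion of maximal rank-$2$ elementary abelian subgroups under $G$ requires a delicate structural classification, rather than a direct appeal to Alperin's fusion theorem applied to a small generating set. Once the upper bound is in place, optimality for $p=2$ is settled by exhibiting an explicit $2$-group of $2$-rank $3$ or $4$ having exactly $4$ conjugacy classes of maximal rank-$2$ elementary abelian subgroups, following the constructions of \cite{CARLSONelab, MAZposet}.
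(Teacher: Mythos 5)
First, note that the paper does not actually prove Theorem~\ref{thm:max-tf}: the statement is imported wholesale from \cite{CARLSONelab,MAZposet}, and the only argument the paper supplies is the optimality remark that follows it. Your reduction via Lemma~\ref{lem:ranktf} and the stratification by $p$-rank do reflect how the cited proofs are organised, and the top stratum ($p$-rank exceeding $p$, respectively $4$) is correctly dispatched by Theorem~\ref{thm:connected}. But the substance of the theorem is exactly the part you defer: the bound $c(G)\leq p$ (respectively $c(G)\leq 4$) in the intermediate rank range is the main content of \cite{CARLSONelab,MAZposet}, and ``a case analysis along the lines of'' those papers is not a proof of it. Even your rank-$2$ stratum is incomplete for odd $p$: the dichotomy ``$\Omega_1(G)$ elementary abelian or extraspecial of order $p^3$'' omits the $3$-groups of maximal class, which form a separate family in Blackburn's classification of rank-$2$ groups and must be treated on their own (the bound $c(G)\leq p+1$ still holds for them, but not by your argument); moreover, when $\Omega_1(G)\cong p^{1+2}_+$ one only gets $c(G)\in\{2,p+1\}$ depending on the action of $G$ on $\Omega_1(G)/Z(\Omega_1(G))$, not $c(G)=p+1$ outright. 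So, as written, the proposal is an outline that ultimately cites the same sources the paper does rather than a self-contained proof.

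Second, your optimality step for $p=2$ aims at the wrong stratum. The extremal example recorded in the paper (and in the references) is $G=Q_8*D_8\cong 2^{1+4}_{-}$, which has $2$-rank \emph{two}: its five elementary abelian subgroups of order $4$ correspond to the five singular points of the quadratic form of minus type on $G/Z(G)\cong\F_2^{\,4}$, they are all maximal and all normal, so $c(G)=n_G=5$ directly from the rank-$2$ case of Lemma~\ref{lem:ranktf}. Hunting for a $2$-group of rank $3$ or $4$ with exactly four conjugacy classes of maximal rank-$2$ elementary abelian subgroups is not what the sources construct, and you would need to verify that such a group exists at all before resting the optimality claim on it; the safe route is the rank-$2$ extraspecial example, exactly parallel to the $p^{1+2}_+$ example you already use for odd $p$.
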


Optimality of the bound $n_G$ for $p$ odd is for instance obtained with
$G$ an extraspecial $p$-group of order $p^3$ and exponent $p$ if $p$
is odd, whereas for $p=2$, we can take for $G$ an extraspecial
$2$-group of order $32$ of the form $Q_8*D_8$.\\

We now present a collection of useful results, which have been proven in previous 
articles on endotrivial modules, see \cite{CARLSON_ETSURVEY} and the 
references therein.
Recall that a subgroup $H$ of $G$ is {\em strongly $p$-embedded} if $p$ divides 
the order of $H$ but $p$ does
not divide the order of $H\cap\ls gH$ for any $g\in G\setminus H$. In
particular, $N_G(P)$ is strongly $p$-embedded in $G$ if and only if the
Sylow $p$-subgroup $P$ is a trivial intersection (T.I. hereafter) subset of
$G$. Also, if $H$ is strongly $p$-embedded in $G$, then $H\geq N_G(P)$.

Useful information on $T(G)$ can sometimes be gathered by comparison
with known endotrivial modules for subgroups of $G$. For instance, if
$H$ is a subgroup of $G$, the restriction along the inclusion induces a
group homomorphism $\Res^G_H~:~T(G)\to T(H)$. 
In contrast, the induction $\Ind_H^G(M)$ of an endotrivial $kH$-module $M$ 
needs not be endotrivial for $G$.

\begin{lemma}[Omnibus Lemma]\label{lem:omnibus}
Let $H$ be a subgroup of $G$ containing the normaliser $N_G(P)$ of a
Sylow $p$-subgroup $P$ of $G$.
\begin{enumerate}
\item If $G$ has $p$-rank at least 2, then $\Omega$ generates an infinite cyclic 
direct summand of $TF(G)$ and  can be chosen as part of a set of generators for $TF(G)$.
In particular, if $T(G)$ has torsion-free rank $1$, then $TF(G)=\langle\Omega\rangle$. 
\item\label{omni2} 
The restriction map~$~\Res^G_H:~T(G)\longrightarrow~T(H)$ is  injective.
More precisely, if $M$ is an indecomposable endotrivial $kG$-module
and if $M\res GH = L\oplus\proj$ where $L$ is an indecomposable
$kH$-module, then $M$ is the $kG$-Green  correspondent of~$L$ and $L$ is
endotrivial.  
\item Assume that for all $x\in G$ the subgroup $\ls xP\cap P$ is non
trivial. Then, the kernel of the restriction map $\Res^G_P:T(G)\to T(P) $ is
generated by the isomorphism classes of $1$-dimensional $kG$-modules. In
particular, this holds whenever $G$ has a non-trivial normal $p$-subgroup.
\item If $H$ is strongly $p$-embedded in $G$, then $\Res^G_H:T (G)\longrightarrow T(H)$ 
is an isomorphism, the inverse
map being induced by the induction $\Ind_H^G$.
\item 
If $P=N_G(P)$ is neither cyclic, nor semi-dihedral, nor
  generalised quaternion, then $TT(G)=\{[k]\}$. 
\end{enumerate}
\end{lemma}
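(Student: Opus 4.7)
The plan is to reduce the statement to the known classification of endotrivial modules for finite $p$-groups. First, since $P=N_G(P)$ is self-normalising, I apply part~(\ref{omni2}) of the present lemma with $H=P$: this yields an injective restriction map $\Res^G_P\colon T(G)\longrightarrow T(P)$. Restricting to torsion subgroups gives an injection $TT(G)\hookrightarrow TT(P)$, so the problem is reduced to showing $TT(P)=\{[k]\}$ under the given hypotheses on $P$.

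Next, I invoke the Dade--Alperin--Carlson--Th\'evenaz classification of $T(P)$ for a finite $p$-group $P$, surveyed in \cite{CARLSON_ETSURVEY}. The outcome is that $TT(P)$ is trivial unless $P$ is cyclic, semi-dihedral, or generalised quaternion, in which exceptional cases the torsion subgroup is known explicitly. By hypothesis $P$ is none of these three types, hence $TT(P)=\{[k]\}$, and the injection forces $TT(G)=\{[k]\}$, as required.

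I do not anticipate any genuine technical obstacle here: the whole statement should fall out as an immediate corollary of part~(\ref{omni2}) once the $p$-group classification is invoked. The self-normalising hypothesis $P=N_G(P)$ is precisely what enables one to take $H=P$ (rather than some strictly larger $N_G(P)$), so that the target of the injective restriction is the $p$-group case; the exclusion of the three exceptional isomorphism types of $P$ is precisely what makes that target trivial. Were $P$ to belong to one of the excluded families, one would still obtain the injection, but would then need further local analysis to decide which torsion elements of $TT(P)$ actually survive in $TT(G)$.
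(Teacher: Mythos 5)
Your argument for part~(5) is correct and is precisely the reasoning the paper relies on: since $P=N_G(P)$, part~(2) applied with $H=P$ gives an injection $T(G)\hookrightarrow T(P)$, hence $TT(G)\hookrightarrow TT(P)$, and the Carlson--Th\'evenaz classification of endotrivial modules over $p$-groups says that $T(P)$ is torsion-free exactly when $P$ is neither cyclic, nor semi-dihedral, nor generalised quaternion. The paper does not write this argument out --- the Omnibus Lemma is presented as a compilation of results proved elsewhere, with a pointer to the survey \cite{CARLSON_ETSURVEY} and its references --- but the remark immediately following the lemma (``unless a Sylow $p$-subgroup $P$ of $G$ has normal $p$-rank one, then $T(P)$ is torsion-free and so $TT(G)$ is detected by restriction to $P$'') sketches exactly the detection-by-restriction logic you use, and the sentence after Lemma~\ref{lem:cext} records the $p$-group fact you invoke.

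The gap is one of coverage rather than of logic: the statement consists of five assertions and your proposal establishes only the fifth. Parts~(1)--(4) --- that $\Omega$ spans a direct $\IZ$-summand of $TF(G)$, that $\Res^G_H$ is injective for every $H\geq N_G(P)$ together with the Green-correspondence description of $M\res GH$, the identification of $\ker(\Res^G_P)$ with $X(G)$ under the non-trivial-intersection hypothesis, and the isomorphism $T(G)\cong T(H)$ for strongly $p$-embedded $H$ --- are nowhere addressed, and part~(2) is in fact consumed as an input to your argument, so it at least requires an independent justification (the standard one runs through Green correspondence applied to $\End_k(M)\cong k\oplus\proj$ and its restriction to $H$). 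If the intended target were part~(5) alone, your proof would be complete; as a proof of the full lemma it is not.
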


Lemma~\ref{lem:omnibus} shows that a large part of $TT(G)$ is generated
by the (stable) isomorphism classes of trivial source endotrivial
modules. Indeed, unless a Sylow $p$-subgroup $P$ of $G$ has normal
$p$-rank one, then $T(P)$ is torsion-free and so $TT(G)$ is detected by
restriction to $P$ in the sense of Lemma~\ref{lem:omnibus}(\ref{omni2}).\\

Many of the Sylow $p$-subgroups of sporadic groups are cyclic for $p>2$.
In fact if $G$ is quasi-simple with $G/Z(G)$ sporadic simple and $P\in
\Syl_{p}(G)$ is cyclic, then $P\cong C_{p}$ and $p\neq 2$. 
So \cite[Theorem 3.2]{MT} describes the structure of $T(G)$ in this case.

\begin{thm}[\cite{MT}]\label{thm:cyclic}
Let $G$ be a finite group with a non-trivial cyclic Sylow $p$-subgroup $P$ with $|P|>2$.
Let $Z$ be the unique subgroup of $P$ of order $p$ and let $H=N_G(Z)$. Then:
\begin{enumerate}
 \item[(1)]   $T(G)=\{\; [\Ind_H^G(M)] \,\mid\, [M]\in T(H) \;\}\cong T(H)\,.$
  \item[(2)] There is an exact sequence
$$\xymatrix{0\ar[r]&X(H)\ar[r]&T(H)\ar[r]^{\Res^H_P}&T(P)\ar[r]&0}$$
where $T(P)\cong \IZ/2$. This sequence splits if and only if $[\Omega^{2}_{H}(k)]$ is a square in $X(H)$. 
In particular, this sequence splits if the index $e:=|N_{G}(Z):C_{G}(Z)|$ is odd.
\end{enumerate}  
\end{thm}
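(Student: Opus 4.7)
The plan treats parts (1) and (2) separately, with Omnibus Lemma~\ref{lem:omnibus} as the main lever. For (1), it suffices to verify that $H=N_G(Z)$ is strongly $p$-embedded in $G$ and then apply Omnibus Lemma~\ref{lem:omnibus}(4), which yields $T(G)\cong T(H)$ with mutually inverse isomorphisms induced by $\Res^G_H$ and $\Ind_H^G$. The key observation is that, since $P$ is cyclic, $Z$ is characteristic in $P$ and is the unique subgroup of order~$p$ of every Sylow $p$-subgroup of $H$: each such Sylow is a conjugate of $P$ lying in $H$ (hence cyclic) and contains $Z$ by normality. Thus if $g\in G\setminus H$ and $x\in H\cap\ls gH$ had order $p$, then $\langle x\rangle=Z=\ls gZ$ would force $g\in N_G(Z)=H$, a contradiction.

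For the exact sequence in (2), the kernel of $\Res^H_P:T(H)\to T(P)$ equals $X(H)$ by Omnibus Lemma~\ref{lem:omnibus}(3), since $Z$ is a non-trivial normal $p$-subgroup of~$H$. Dade's classification for cyclic $p$-groups with $|P|>2$ gives $T(P)=\langle[\Omega_P(k)]\rangle\cong\IZ/2$, and surjectivity of $\Res^H_P$ follows because $[\Omega_H(k)]$ restricts to the generator $[\Omega_P(k)]$ (Heller shifts commute with restriction up to projective summands). The resulting extension of $\IZ/2$ by $X(H)$ splits iff some $[L]\in X(H)$ satisfies $2([\Omega_H(k)]+[L])=0$ in $T(H)$, i.e.\ iff $[\Omega_H^2(k)]=-2[L]$ for some~$[L]$; in multiplicative notation this is exactly the condition that $[\Omega_H^2(k)]$ be a square in $X(H)$ under tensor product of $1$-dimensional modules.

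For the sufficient condition `$e$ odd implies splits', the plan is to show that $[\Omega^2_H(k)]$ has order dividing $e$ in $X(H)$, since an odd-order element of an abelian group is always a square. Set $C:=C_G(Z)\trianglelefteq H$, so that $H/C$ is cyclic of order $e$; it then suffices to establish that the character of $\Omega^2_H(k)$ is trivial on $C$, equivalently that $\Omega^2_C(k)\cong k$ stably. Because $Z\leq Z(C)$ and $P$ is cyclic, $N_C(P)/C_C(P)$ is simultaneously a $p$-group (as it lies in the kernel of $\Aut(P)\twoheadrightarrow\Aut(Z)$) and a $p'$-group (as a subquotient of $N_C(P)/P$), hence trivial; Burnside's normal $p$-complement theorem then yields $C=K\rtimes P$ with $K\trianglelefteq C$ a normal $p'$-subgroup, and the Fong-Reynolds reduction identifies the simples of the principal block $B_0(kC)$ with those of $k(C/K)\cong kP$, i.e.\ only the trivial module. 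Hence $B_0(kC)$ is a one-edge Brauer tree algebra, in which $\Omega^2(k)\cong k$ stably, completing the argument. The main obstacle is this last structural computation for $B_0(kC)$; the rest is standard application of strong $p$-embedding, the Omnibus Lemma, Dade's theorem and Burnside's theorem.
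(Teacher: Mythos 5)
This statement is quoted from \cite{MT} (Theorem~3.2 there) and the paper offers no proof of its own, so there is nothing internal to compare against; judged on its own terms, your argument is correct and complete. Moreover it follows essentially the same route as the original proof in \cite{MT}: part (1) via the observation that every element of order $p$ in $H=N_G(Z)$ generates $Z$ (so $H$ is strongly $p$-embedded) together with Lemma~\ref{lem:omnibus}(4); part (2) via Lemma~\ref{lem:omnibus}(3) for the kernel, Dade's theorem for $T(P)\cong\IZ/2$, and the standard translation of splitting into $[\Omega^2_H(k)]\in 2X(H)$. Your treatment of the case $e$ odd is also the expected one and is sound: $N_C(P)/C_C(P)$ is trivial (being both a $p$-group, as it lies in the kernel of $\Aut(P)\to\Aut(Z)$, and a $p'$-group, as a quotient of $N_C(P)/P$), Burnside gives $C=K\rtimes P$ with $K=O_{p'}(C)$, the principal block of $kC$ is isomorphic to $kP$ so that $\Omega^2_C(k)\cong k$ stably, hence the linear character $[\Omega^2_H(k)]$ factors through the cyclic group $H/C$ of order $e$ and is a square whenever $e$ is odd.
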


\subsection{Character theory for endotrivial modules}\label{ssec:ETchars}
Finally we recall  known results needed for our investigations which
stem from ordinary character theory. 

\begin{thm}[{}{\cite[Theorem 1.3 and Corollary 2.3]{LMS}}]\label{thm:lift}
Let $(K,\cO,k)$ be a splitting $p$-modular system. Let $V$ be an endotrivial $kG$-module. Then
\begin{enumerate}
\item[(1)] $V$ is liftable to an endotrivial $\cO G$-lattice.
\item[(2)] Moreover, if $V$ is liftable to a $\IC G$-module affording the
  character $\chi$, then $|\chi(g)|=1$  for every $p$-singular element
  $g\in G$. 
\end{enumerate}
\end{thm}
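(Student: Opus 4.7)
The plan is to handle the two parts independently, since (2) is a quick character-theoretic consequence of the defining relation while (1) is the substantive content requiring lifting theory.

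For part (2), suppose $V$ lifts to an $\cO G$-lattice whose extension to $\IC$ affords the character $\chi$. Since $V$ is endotrivial, $V\otimes V^{*}\cong k\oplus Q$ for some projective $kG$-module $Q$; because projective modules and the trivial module both lift uniquely to $\cO G$-lattices, this decomposition lifts to $\wt V\otimes\wt V^{*}\cong \cO\oplus\wt Q$ with $\wt Q$ a projective $\cO G$-lattice. Extending scalars to $\IC$ and evaluating characters at a $p$-singular element $g\in G$ then gives
$$|\chi(g)|^{2}=\chi(g)\overline{\chi(g)}=\chi_{\wt V\otimes\wt V^{*}}(g)=1+\chi_{\wt Q}(g)=1,$$
since the character of a projective module vanishes on $p$-singular elements, and hence $|\chi(g)|=1$.

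For part (1), the strategy is to reduce the lifting problem to $p$-groups via vertices and sources. First reduce to $V$ indecomposable using Lemma~\ref{basics}(1), and let $P\in\Syl_p(G)$ be a vertex of $V$ (an indecomposable endotrivial module necessarily has a Sylow vertex, because $V\otimes V^{*}$ then inherits $V$'s vertex yet must have $k$ as a summand). By Lemma~\ref{basics}(\ref{item3}), a source $S$ of $V$ is then an indecomposable endotrivial $kP$-module. Invoking the Dade--Alperin--Carlson--Th\'evenaz classification of endotrivial modules for $p$-groups, every such $S$ arises via Heller translates and tensor products from explicit modules sitting as kernels inside permutation $kP$-modules; since permutation modules always lift to $\cO P$-lattices, one concludes that $S$ admits an endotrivial lift $\wt S$ as an $\cO P$-lattice.

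The main obstacle is the globalisation step: transporting the lift $\wt S$ of the source into a lift $\wt V$ of the full $kG$-module $V$. Two natural approaches suggest themselves. The first is a Puig-style source-algebra argument: because $V$ has endo-permutation source, its source algebra is controlled up to Morita equivalence by data involving $P$ and $\wt S$, and $\wt S$ can be transported along this equivalence to produce $\wt V$. The second is a direct obstruction calculation: the obstruction to lifting $V$ lies in $\Ext^{2}_{kG}(V,V)\cong H^{2}(G,k)$ (using $V\otimes V^{*}\cong k\oplus\proj$), and one would combine the existence of $\wt S$ with a transfer argument, restricting the class to $P$ where it is killed by $\wt S$, to force it to vanish globally. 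Either way, once $\wt V$ exists the calculation performed for part (2) yields $\wt V\otimes\wt V^{*}\cong\cO\oplus\wt Q$, confirming the lift is itself endotrivial.
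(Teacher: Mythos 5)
First, a remark on the target: the paper does not prove this statement at all --- it is quoted from \cite[Theorem~1.3 and Corollary~2.3]{LMS} --- so your attempt can only be judged on its own merits. Your part~(2) is essentially the standard argument and is correct: a lift $\wt V$ gives a lattice $\wt V\otimes\wt V^*$ reducing to $k\oplus Q$, the projective summand contributes $0$ to the character at a $p$-singular $g$, and the rank-one summand contributes a root of unity, which must equal $1$ because $|\chi(g)|^2$ is a nonnegative real number. (Your parenthetical ``the trivial module lifts uniquely'' is not quite accurate --- a rank-one lattice reducing to $k$ can carry any linear character of $p$-power order --- but this does not damage the conclusion.)

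Part~(1) is where the genuine gap lies, and you have in effect flagged it yourself: the ``globalisation step'' is the whole content of the theorem, and you do not carry it out. Offering two candidate strategies is not a proof, and neither sketch survives scrutiny as written. The obstruction-theoretic route must contend with the fact that the class in $H^2(G,k)\cong\Ext^2_{kG}(V,V)$ controlling the passage from a lift over $\cO/\mathfrak{p}^{n}$ to one over $\cO/\mathfrak{p}^{n+1}$ depends on the partial lift already chosen; its restriction to $P$ is the obstruction for \emph{that particular} lift of $S\oplus\proj$, and since $\Ext^1_{kP}(S,S)\cong H^1(P,k)\neq 0$ the source has many non-isomorphic lifts at each finite level, so the existence of \emph{one} lift $\wt S$ does not by itself kill the restricted class. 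The source-algebra route is not an argument at all in the form given. There is also a secondary gap earlier in your reduction: the assertion that every indecomposable endotrivial $kP$-module is built from kernels of surjections of permutation modules via Heller translates and tensor products is the statement that $T(P)$ is generated by relative syzygies, which fails (or at least requires separate verification) for the exceptional torsion classes when $P$ is generalised quaternion; since the theorem is stated for arbitrary finite $G$, these cases cannot be waved away. Finally, even granting a lift $\wt V$, endotriviality of the \emph{lattice} is a statement about $\End_\cO(\wt V)\cong\cO\oplus\proj$, which is not established by the character computation of part~(2); one must split off $\cO$ via the trace (using $\dim_k V\equiv\pm1\pmod p$) and argue that the complement, reducing to the projective module $Q$, is itself projective. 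In short: the skeleton (Sylow vertex, endotrivial source, lift the source, globalise) is the right one, but the load-bearing step is missing and the auxiliary claims feeding into it are not fully justified.
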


Further, if a $kG$-module $M$ is the $kG$-Green correspondent of a
$1$-dimensional $kN_{G}(P)$-module, then $M$ is a trivial source module
and it lifts uniquely to a trivial source  $\cO G$-lattice $\hat
M$. Denote by $\chi_{\hat M}$ the ordinary character afforded by $\hat
M$. Then endotriviality for $M$ can be read from the character table of
$G$ as follows: 

\begin{thm}[{}{\cite[Theorem 2.2]{LM}}]\label{lem:Greenchars}
Let $P\in Syl_{p}(G)$ and let $M$ be the $kG$-Green correspondent of a
1-dimensional $kN_{G}(P)$-module. Then $M$ is endotrivial if and only
if $\chi_{\hat M}(x)=1$  for all $p$-elements $x\in G\setminus\{1\}$. 
\end{thm}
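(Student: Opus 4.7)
The plan is to exploit the fact that $M$ is a trivial source module with vertex $P$, and then convert both implications into character-theoretic statements about its unique $\mathcal{O}G$-lift $\hat M$. For the setup, I note that any one-dimensional $kN_G(P)$-module is trivial on $P$ (a homomorphism from a $p$-group into $k^\times$ must be trivial in characteristic $p$), so it is a trivial source module with vertex $P$; Green correspondence then forces the same on $M$. For the forward direction, I would invoke Theorem~\ref{thm:lift}(2) to obtain $|\chi_{\hat M}(x)|=1$ for every nontrivial $p$-element $x$, and upgrade this to $\chi_{\hat M}(x)=1$ via the standard fact that a trivial source module has non-negative integer character values at $p$-elements. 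Concretely, restricting $\hat M$ to the cyclic $p$-subgroup $\langle x \rangle$ yields a permutation lattice $\bigoplus_i \mathcal{O}[\langle x \rangle/Q_i]$, and a direct fixed-point count shows that each summand contributes $1$ to $\chi_{\hat M}(x)$ if $Q_i=\langle x \rangle$ and $0$ otherwise. The value $\chi_{\hat M}(x)$ is therefore a non-negative integer of absolute value $1$, hence equal to $1$.

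For the converse, by Lemma~\ref{basics}(\ref{item3}) it is enough to show that $M\res{G}{P}$ is endotrivial as a $kP$-module. Since restriction preserves trivial source and, over the $p$-group $P$, every trivial source module is a genuine permutation module, I can write $M\res{G}{P} \cong k[X]$ for some finite $P$-set $X$. The hypothesis then translates into $|X^g|=\chi_{\hat M}(g)=1$ for all $g\in P\setminus\{1\}$. Choosing any nontrivial central element $z \in Z(P)$, the singleton $X^z$ is $P$-invariant by the centrality of $z$, so its unique element $x_0$ is fixed by all of $P$. Writing $X=\{x_0\}\sqcup X'$, the $P$-subset $X'$ satisfies $|(X')^g|=0$ for every $g\neq 1$, so $X'$ is a free $P$-set; hence $k[X']\cong (kP)^n$ is projective and $M\res{G}{P}\cong k\oplus (kP)^n$ is endotrivial, as required.

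The main obstacle is conceptual rather than technical: one must notice that the hypothesis $\chi_{\hat M}(x)=1$ is strictly stronger than the condition $|\chi_{\hat M}(x)|=1$ supplied by Theorem~\ref{thm:lift}, and that this extra equality is exactly what allows one to read off the permutation structure of $M\res{G}{P}$ in a single stroke. Once the trivial-source/permutation-module dictionary over a $p$-group is in place, both directions reduce to an elementary fixed-point count on the $P$-set $X$.
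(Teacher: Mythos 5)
Your argument is correct. Note that the paper does not actually prove this statement: it imports it from \cite[Theorem 2.2]{LM}, remarking only that the result ``relies on'' the Green--Landrock--Scott lemma (Lemma~\ref{lem:LandScott}); your proof is exactly the intended one, combining Theorem~\ref{thm:lift}(2) with the non-negativity of trivial source character values at $p$-elements for the forward direction, and the dictionary between trivial source $kP$-modules and permutation $P$-sets (plus the fixed-point count $\chi_{\hat M}(g)=|X^g|$, which is justified by the uniqueness of trivial source lifts) for the converse. The only step worth making fully explicit is that a $P$-set with $|X^g|=1$ for all $g\neq 1$ decomposes as one fixed point plus a free $P$-set, which you do correctly via a central element of $P$.
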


The latter proposition relies on the following result by Green, Landrock and Scott
on character values of trivial source modules, which we will also implicitly 
extensively use in computations in Sections~\ref{sec:sporadic2} and~\ref{sec:TT}. 

\begin{lemma}[{}{\cite[Part II, Lemma~12.6]{Lan}}]\label{lem:LandScott}
 Let $M$ be an indecomposable trivial source $kG$-module and $x\in G$ a
 $p$-element.  Then,
 \begin{enumerate}
  \item[\rm(1)] $\chi_{\hat M}(x)\geq 0$ is an integer (corresponding to the
   multiplicity of the trivial $k\!\left<x\right>$-module as a direct summand
   of $M\res{G}{\left<x\right>}$);
  \item[\rm(2)] $\chi_{\hat M}(x)\neq 0$ if and only if $x$ belongs to a vertex
   of $M$.
 \end{enumerate}
\end{lemma}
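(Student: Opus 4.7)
The plan is to reduce both parts to a fixed-point count on a permutation lattice. Since $M$ is indecomposable of trivial source with some vertex $P\leq G$, we have $M\mid\Ind_P^G(k)$, and by the uniqueness of lifts for trivial source modules the $\cO G$-lattice $\hat M$ likewise satisfies $\hat M\mid\Ind_P^G(\cO)$. Setting $H:=\langle x\rangle$, Mackey's formula yields $\Ind_P^G(\cO)\res GH\cong\bigoplus_{g\in H\backslash G/P}\Ind_{H\cap\ls gP}^H(\cO)$, a permutation $\cO H$-lattice. Hence $\hat M\res GH$ is itself a trivial source $\cO H$-lattice, and since $H$ is a $p$-group every such lattice is a permutation lattice. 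Thus $\hat M\res GH\cong\cO\Omega$ for some finite $H$-set $\Omega$.

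For part (1), evaluating the character on the canonical $\cO$-basis of $\cO\Omega$ gives $\chi_{\hat M}(x)=|\Omega^x|\in\IZ_{\geq 0}$. Decomposing $\Omega$ into $H$-orbits of the form $H/H'$ and reducing modulo $p$, one uses that $k[H/H']$ is indecomposable and isomorphic to $k$ as a $kH$-module exactly when $H'=H$; hence $|\Omega^x|$ equals the multiplicity of the trivial $kH$-module as a direct summand of $M\res GH$, which establishes (1).

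For part (2), the condition $\chi_{\hat M}(x)\neq 0$ is equivalent to $\Omega^x\neq\emptyset$. Since $\hat M\mid\Ind_P^G(\cO)$, whose permutation basis $G/P$ has $H$-fixed set $\{gP:\ls{g^{-1}}{x}\in P\}$, any $H$-fixed element of $\Omega$ produces such a fixed coset, forcing $x$ to be conjugate into $P$ and hence to lie in a vertex of $M$. The converse is the main obstacle, since the specific summand $\hat M$ a priori need not inherit the $H$-fixed points of the ambient induced module. I would settle this via the Brauer construction $M\mapsto M[H]:=M^H/\sum_{Q<H}\mathrm{tr}^H_Q(M^Q)$. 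On the permutation $kH$-module $k\Omega$ one sees directly that $M[H]$ has basis the $H$-fixed elements of $\Omega$, yielding $\dim_k M[H]=|\Omega^x|=\chi_{\hat M}(x)$; and Brou\'e's theorem characterises the vanishing of the Brauer quotient in terms of vertices, namely $M[H]\neq 0$ if and only if $H$ is contained in a vertex of $M$. Combining these two identities delivers (2).
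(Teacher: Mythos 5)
The paper gives no proof of this lemma---it is quoted directly from \cite[Part II, Lemma~12.6]{Lan}---so there is nothing internal to compare against; your argument is a correct, self-contained proof and follows the standard route for this result. Namely: the restriction of the trivial source lattice $\hat M$ to the cyclic $p$-group $\left<x\right>$ is a permutation lattice $\cO\Omega$ (summand of a Mackey decomposition, and $p$-permutation lattices over $p$-groups are permutation lattices), the character value is the fixed-point count $|\Omega^{x}|$, which by Krull--Schmidt equals the multiplicity of the trivial summand, giving (1); and Brou\'e's characterisation of vertices via the non-vanishing of the Brauer quotient $M[\left<x\right>]$, whose dimension is again $|\Omega^{x}|$, supplies the non-obvious implication of (2).
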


\section{Sporadic groups and their covers}\label{sec:sporadic}

\subsection{Notation and terminology}\label{ssec:nota}
We call \textit{sporadic simple} any of the 26 standard sporadic simple
groups together with the Tits simple group $\tw{2}{\,\spor F}_4(2)'$.
We use the $\Atlas$~\cite{ATLAS} notation, and in particular:
\begin{itemize}
  \item[$A\times B$] denotes a direct product of groups $A$ and $B$;
  \item[$A.B$] or $AB$ denotes a group having a normal subgroup
    isomorphic to $A$ with corresponding quotient isomorphic to $B$; 
   \item[$A:B$] denotes a split extension of $A$ by $B$; 
   \item[$A\ast B$] denotes a central product of $A$ by $B$; 
  \item[$m$] denotes a cyclic group of order $m$;
  \item[$p^{n}$] denotes an elementary abelian $p$-group of order $p^{n}$; 
  \item[$p_{+}^{1+2n}$] where $p$ is an odd prime, denotes an
    extraspecial group of order $p^{1+2n}$ and {exponent $p$};
  \item[$D_{2^{n}}$] denotes  a dihedral 2-group of order $2^{n}$;
 \item[$SD_{2^{n}}$] denotes a semi-dihedral 2-group of order $2^{n}$;
 \item[$Q_{8}$] denotes the quaternion 2-group of order~8;
 \item[$\fS_{n}$] denotes the symmetric group on $n$ letters;
 \item[$\fA_{n}$] denotes the alternating group on $n$ letters.
\end{itemize}
 If more than one symbol is used, we read them left to right,
    that is we write $A.B:C$ for $(A.B):C$. 
In addition $Z(G)$ denotes the centre of $G$ and $G'=[G,G]$ the
commutator subgroup of $G$. Also, a $p'$-group for a
    prime $p$ is a group of order not divisible by $p$.

\subsection{Covering groups and inflation}\label{ssec:covgrps}

A finite group $G$ is called {\em quasi-simple} if $G$ is a perfect central
extension of a simple group. If $G$ is quasi-simple, then $H :=
G/Z(G)$ is simple and $G$ is a perfect central extension of $H$. We then
call $G$ a {\em covering group of $H$}. According to
Notation~\ref{ssec:nota}, $m.H$ denotes a covering group of a group $H$,
which is a central extension of $H$ by a cyclic group of order $m$. The
integer $m$ is called the {\em degree} of the central extension.
If $\ell$ is a prime dividing $m$,
then there exists a covering group $K$ of $H$ such that $m.H$ is a
central extension of $K$ of degree $\ell$.   

\begin{lemma}\label{lem:cext}
Let $\xymatrix{1\ar[r]&A\ar[r]&G\ar[r]^-{\pi}&H\ar[r]&1}$ be a central
extension of $H$ by $A$, where $G$ is a perfect group, and let  
$P,Q$ be Sylow $p$-subgroups of $G$ and $H$
respectively. Then the following hold.
\begin{itemize}
  \item[(1)]  $X(N_H(Q))$ is isomorphic to a quotient group of
    $X(N_G(P))$. 
  \item[(2)] If $p$ divides $|A|$ and $TT(P)=\{[k]\}$, then
    $TT(G)=\{[k]\}$.
  \item[(3)] If $p$ does not divide $|A|$, then the torsion-free rank of
    $T(H)$ equals the torsion-free rank of $T(G)$. 
\end{itemize}  
\end{lemma}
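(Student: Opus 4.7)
The strategy for all three parts is to analyse how the central quotient $\pi \colon G \twoheadrightarrow H$ interacts with Sylow $p$-subgroups and their normalisers, noting throughout that $A \leq Z(G) \leq N_G(P)$. For part~(1), I would first replace $Q$ by $\pi(P)$ (permissible since $\pi(P)$ is itself a Sylow $p$-subgroup of $H$, with $A \cap P$ being the Sylow $p$-subgroup of the central subgroup $A$), and then prove that $\pi$ restricts to a surjection $N_G(P) \twoheadrightarrow N_H(Q)$ with kernel $A$. Surjectivity follows from a Frattini-type argument: given $h \in N_H(Q)$ and any lift $g \in \pi^{-1}(h)$, the subgroups $P$ and $gPg^{-1}$ are both Sylow $p$-subgroups of $\pi^{-1}(Q) = PA$, hence conjugate by some $x \in PA$; writing $x = ya$ with $y \in P$ and $a \in A$ and invoking centrality of $A$ yields $y \cdot gPg^{-1} \cdot y^{-1} = P$, so $yg \in N_G(P)$ and $\pi(yg) = \pi(y)\,h \in Q\,h$, whence $h \in \pi(N_G(P))$. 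The resulting isomorphism $N_G(P)/A \cong N_H(Q)$ induces a surjection of abelianisations; since in characteristic $p$ one has $X(N) \cong (N^{\mathrm{ab}})_{p'}$ as abstract abelian groups (because $k^\times$ contains no non-trivial $p$-th roots of unity), this descends to a surjection $X(N_G(P)) \twoheadrightarrow X(N_H(Q))$, as required.

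For part~(2), the hypothesis $p \mid |A|$ forces $Z := A \cap P$ to be a non-trivial central---hence normal---$p$-subgroup of $G$. The Omnibus Lemma~\ref{lem:omnibus}\,(3) then applies (the condition $\ls xP \cap P \supseteq Z \neq 1$ is satisfied for every $x \in G$), giving $\ker\bigl(\Res^G_P \colon T(G) \to T(P)\bigr) = X(G)$. As $G$ is perfect by hypothesis, $X(G) = \{[k]\}$, so $\Res^G_P$ is injective on $T(G)$; the assumption $TT(P) = \{[k]\}$ then forces $TT(G) = \{[k]\}$.

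For part~(3), the hypothesis $p \nmid |A|$ gives $A \cap P = 1$, so $\pi|_{P} \colon P \xrightarrow{\sim} Q$ is a group isomorphism. The crucial step is the fusion-correspondence: given $p$-subgroups $E_1, E_2 \leq P$ with $h\,\pi(E_1)\,h^{-1} = \pi(E_2)$ for some $h \in H$, any lift $g$ of $h$ satisfies $gE_1g^{-1} \cdot A = E_2 \cdot A$; since $A$ is central and of order coprime to $p$, this group is an internal direct product with $A$, so $gE_1g^{-1}$ and $E_2$ are both its unique Sylow $p$-subgroup and therefore coincide. Hence $G$ has the same $p$-rank as $H$, and the $G$-conjugacy classes of maximal elementary abelian $p$-subgroups of rank~$2$ biject with the corresponding $H$-conjugacy classes; Lemma~\ref{lem:ranktf} delivers the equality of torsion-free ranks. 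The most delicate of the three steps is this fusion-correspondence, which depends on the central and $p'$ structure of $A$; parts~(1) and~(2) are more direct, namely a Frattini argument combined with duality on $p'$-parts, and a clean application of the Omnibus Lemma once the central $p$-subgroup arising from $A$ is identified.
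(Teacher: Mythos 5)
Your proposal is correct and follows essentially the same route as the paper: part (1) via the surjection $N_G(P)\twoheadrightarrow N_H(Q)$ with kernel $A$ and passage to $p'$-parts of abelianisations, part (2) via Lemma~\ref{lem:omnibus}(3) together with perfectness of $G$, and part (3) via Lemma~\ref{lem:ranktf} and the central $p'$ splitting. You merely supply details the paper leaves implicit (the Frattini argument for surjectivity in (1) and the fusion correspondence in (3)), which is welcome but not a different method.
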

Recall that $T(P)$ is torsion-free if and only if $P$ is neither cyclic,
generalised quaternion nor semi-dihedral. 

\begin{proof} \vbox{\  }
\begin{itemize}
\item[(1)] By assumption, $\pi$ induces surjective homomorphisms 
$\xymatrix{N_G(P)\ar[r]^\pi&N_H(Q)}$ with kernel $A$, and 
$\xymatrix{PN_G(P)'\ar[r]^\pi&QN_H(Q)'}$ with kernel $O_{p}(A)(A\cap
N_G(P)')$. So $\pi$ induces a surjective homomorphism
$N_G(P)/PN_G(P)'\twoheadrightarrow N_H(Q)/QN_H(Q)'$ with kernel
isomorphic to $A/O_p(A)(A\cap N_G(P)')$. Therefore $X(N_H(Q))$ is
isomorphic to a quotient of $X(N_G(P))$.
  \item[(2)] Since $T(P)$ is torsion-free, then $TT(G)$ is generated by
    the classes of the trivial source endotrivial modules. 
Now since $p\,|\,|A|$,  Lemma~\ref{lem:omnibus}(3)  implies that such module must have dimension $1$,
but because $G$ is perfect, only the trivial module has dimension $1$.
  \item[(3)] The torsion-free ranks of $T(G)$ and $T(H)$ are determined
    by the number of conjugacy classes of maximal elementary abelian
    subgroups of rank $2$. The claim follows from the fact that  $A$ is
    central in $G=A.H$.
\end{itemize}  
\end{proof}

When investigating endotrivial modules we often rely on an operation called inflation.
Namely, if $N$ is a normal subgroup of the group $G$, the restriction along the
natural projection map $G\rightarrow G/N$ makes a $k[G/N]$-module $V$
into a $kG$-module $\Inf_{G/N}^G(V)$ on which $N$ acts trivially. This operation is 
the {\em inflation} from $G/N$ to $G$. If $p\,\nmid |N|$, then inflation
of an endotrivial $k[G/N]$-module is an endotrivial
$kG$-module. Moreover, if $N$ is central, independently from the order
of $N$, inflation commutes with Green correspondence in the following sense.

\begin{lemma}\label{lem:inf}
Let $G$ be a finite group and let $N$ be a normal subgroup of $G$.
\begin{itemize}
\item[(1)]  Let $M$ be an endotrivial $k[G/N]$-module with $\dim_{k}(M)>1$. Then
  $\Inf_{G/N}^G(M)$ is an endotrivial $kG$-module if and only if $p$
  does not divide $|N|$.
In particular, if $p$ does not divide $|N|$, then inflation induces a 
well-defined group homomorphism
  $${\Inf_{G/N}^G:T(G/N)\to T(G):}[M]\mapsto[\Inf_{G/N}^{G}(M)]\,.$$ 
\item[(2)] Assume that $N\leq Z(G)$ and let $P\in\Syl_{p}(G)$. Let $V$
  be a $kG$-module with vertex $P$ such that
  $V=\Inf_{G/N}^{G}(\overline{V})$ for some $k[G/N]$-module
  $\overline{V}$.  Let $\Gamma(-)$ and $\overline{\Gamma}(-)$ denote the
  Green correspondence from $G$ to $N_G(P)$ and from $G/N$ to
  $N_{G}(P)/N$ respectively.  Then
  $\Inf_{N_{G}(P)/N}^{N_{G}(P)}(\overline{\Gamma}(\overline{V}))\cong
 \Gamma(\Inf_{G/N}^{G}(\overline{V}))$. 
\end{itemize}  
\end{lemma}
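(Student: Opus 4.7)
The plan for (1) is to reduce endotriviality-preservation under $\Inf_{G/N}^G$ to whether inflation preserves projectivity. Starting from $M\otimes M^*\cong k\oplus Q$ with $Q$ projective over $k[G/N]$, and using that $\Inf$ commutes with tensor product and $k$-duality, one obtains $\Inf(M)\otimes\Inf(M)^*\cong k\oplus\Inf(Q)$; so $\Inf(M)$ is endotrivial iff $\Inf(Q)$ is $kG$-projective for every projective $Q$. Since projectives are summands of free modules and $\Inf(k[G/N])\cong\Ind_N^G(k)$, this reduces to the projectivity of $\Ind_N^G(k)$ in $kG$, which by Frobenius reciprocity is equivalent to $k$ being $kN$-projective, hence to $p\nmid|N|$ by Maschke's theorem. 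Conversely, when $p\,|\,|N|$ and $\dim_k M>1$, any nontrivial $p$-subgroup $R\leq N\cap P$ acts trivially on $\Inf(M)$, so $\Inf(M)\res GR\cong(\dim_k M)\cdot k$; by Lemma~\ref{basics}(\ref{item3}) this restriction would itself be endotrivial, but a direct sum of more than one copy of $k$ over a nontrivial $p$-group is not endotrivial (its tensor square $(\dim_k M)^2\cdot k$ contains no projective summand, since $k$ is not $kR$-projective). The functoriality of $\Inf_{G/N}^G\colon T(G/N)\to T(G)$ when $p\nmid|N|$ then follows since $\Inf$ commutes with $\otimes$ and preserves projectives, so it respects the equivalence relation defining $T$.

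For (2), the strategy is to exploit the characterisation of the Green correspondent $\Gamma(V)$ as the unique indecomposable summand of $V\res G{N_G(P)}$ with vertex $P$. Setting $\pi\colon G\twoheadrightarrow\overline{G}=G/N$ and $\overline{P}=PN/N\in\Syl_p(\overline{G})$, I would first verify, using $N\leq Z(G)$ and a Sylow argument inside $PN$, that $N_G(P)/N=N_{\overline{G}}(\overline{P})$, so that $\overline{\Gamma}$ really is the Green correspondence between $\overline{G}$ and $\overline{H}:=N_G(P)/N$. Next, I would show that $\overline{V}$ has vertex $\overline{P}$: if its vertex $\overline{Q}$ were strictly contained in $\overline{P}$, then applying $\Inf$ to a realisation $\overline{V}\mid\Ind_{\overline{Q}}^{\overline{G}}(\overline{S})$ and using the natural isomorphism $\Inf\circ\Ind_{\overline{Q}}^{\overline{G}}\cong\Ind_{\pi^{-1}(\overline{Q})}^G\circ\Inf$ would force $V$ to be projective relative to $\pi^{-1}(\overline{Q})$, whose $p$-part has order $|\overline{Q}|\cdot|N|_p<|\overline{P}|\cdot|N|_p=|P|$, contradicting that $V$ has vertex $P$.

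The core step is the natural isomorphism $\Inf_{\overline{G}}^G(\overline{V})\res G{N_G(P)}\cong\Inf_{\overline{H}}^{N_G(P)}\!\bigl(\overline{V}\res{\overline{G}}{\overline{H}}\bigr)$, which is a direct check on module actions. Under it, the Green decomposition $\overline{V}\res{\overline{G}}{\overline{H}}=\overline{\Gamma}(\overline{V})\oplus\overline{X}$, with $\overline{X}$ a sum of indecomposable summands of vertex strictly smaller than $\overline{P}$, transforms into an analogous decomposition of $V\res G{N_G(P)}$. Inflation is fully faithful, since for any $k\overline{H}$-modules $\overline{A},\overline{B}$ one has $\Hom_{kN_G(P)}(\Inf\overline{A},\Inf\overline{B})=\Hom_{k\overline{H}}(\overline{A},\overline{B})$, and hence preserves indecomposability. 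The same inflation--induction compatibility used above, now applied inside $N_G(P)$, shows that $\Inf$ takes vertex $\overline{P}$ to vertex $P$ and strictly smaller vertices to strictly smaller ones. Therefore $\Inf(\overline{\Gamma}(\overline{V}))$ is the unique indecomposable summand of $V\res G{N_G(P)}$ with vertex $P$, and equals $\Gamma(V)$ by uniqueness in the Green correspondence. The main technical obstacle is this careful tracking of vertices through the inflation functor, which crucially relies on the centrality $N\leq Z(G)$ for the good interaction of $\pi^{-1}$ with induction.
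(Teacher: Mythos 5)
Your proposal is correct, and it diverges from the paper in instructive ways. For part (1) your ``only if'' direction is essentially the paper's argument: both observe that a non-trivial $p$-subgroup of $N$ acts trivially on the inflation, which is incompatible with endotriviality once $\dim_k(M)>1$ (the paper phrases it as the inflation of the non-zero projective part of $\End_k(M)$ failing to be projective; your restriction to $R\leq N\cap P$ and the non-endotriviality of $(\dim_k M)\cdot k$ is the same obstruction). For the ``if'' direction the paper argues differently: since $p\nmid|N|$, a Sylow $p$-subgroup $P$ of $G$ maps isomorphically onto one of $G/N$, and $\Res^G_P\Inf_{G/N}^G(M)$ is identified with the restriction of $M$ to that Sylow subgroup, so Lemma~\ref{basics}(\ref{item3}) applies directly; your route instead reduces to the preservation of projectivity via $\Inf_{G/N}^G(k[G/N])\cong\Ind_N^G(k)$ and Maschke. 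Both are standard and correct; the paper's is marginally shorter, yours makes the role of projectivity explicit and yields the functoriality statement with no extra work (one small imprecision: the relevant question is whether $\Inf(Q)$ is projective for the \emph{specific} non-zero $Q$ with $M\otimes M^*\cong k\oplus Q$, not ``for every projective $Q$'', but your separate direct argument for $p\mid|N|$ covers this anyway). For part (2) the paper gives no argument at all, simply citing \cite[Proposition~2.9]{DANZKUEL}; you supply a complete and correct proof: the identification $N_{G/N}(PN/N)=N_G(P)/N$ via centrality and Sylow's theorem in $PN$, the vertex computation for $\overline{V}$ using $\Inf\circ\Ind_{\overline{Q}}^{\overline{G}}\cong\Ind_{\pi^{-1}(\overline{Q})}^G\circ\Inf$, the compatibility of inflation with restriction to $N_G(P)$, and the order count $|\overline{P}|\cdot|N|_p=|P|$ that lets you separate the Green correspondent from the lower-vertex summands all check out. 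This self-contained argument is a genuine addition relative to the paper's citation.
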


\begin{proof} \vbox{\  }
\begin{itemize}
\item[(1)] 
Set $H:=G/N$ and let $P,Q$ be Sylow $p$-subgroups of $G$ and $H$ respectively.
If $p$ divides $|N|$, then non-trivial $p$-subgroups of $N$ acts trivially on $\Inf_H^G(L)$ for every 
projective $kH$-module $L$. Therefore it follows by definition that the inflation 
$\Inf_H^G(V)$ of an endotrivial
$kH$-module $V$ is not an endotrivial $kG$-module. Now suppose that $p$ does 
not divide $|N|$, choose an isomorphism $\phi:P\to Q$ and denote  by $\Res_\phi$  
the restriction along $\phi$ of a $kQ$-module. 
Then, because  $\Res^G_P\Inf_H^G(M)\cong\Res_\phi\Res^H_Q(M)$ for any 
$kH$-module $M$, by Lemma~\ref{basics}(\ref{item3}) inflation 
preserves endotrivial modules. The claim follows.
\item[(2)] See \cite[Proposition~2.9]{DANZKUEL}.
\end{itemize}  
\end{proof}

\subsection{T.I. Sylow $p$-subgroups}\label{ssec:TI}

We end this section with the list of the finite simple groups
which have noncyclic trivial intersection (T.I.) Sylow $p$-subgroups. 
Recall that $P\subset G$ is T.I. if $P\cap \ls gP=\{1\}$ for
any $g\in G\setminus N_G(P)$. In particular, if $|P|$ is prime, then $P$
is T.I.. 

\begin{thm}\cite[Proposition~4.6]{MICHLER}\label{thm:ti}
Let $G$ be a nonabelian simple group with a noncyclic T.I. Sylow
$p$-subgroup $P$. Then $G$ is isomorphic to one of the following groups.
\begin{enumerate}
\item $\PSL_{2}(q)$ where $q=p^n$ with $n\geq2$.
\item $\PSU_{3}({q})$ where $q$ is a power of $p$.
\item $\ls2{\,\spor B}_2(2^{2m+1})$ and $p=2$.
\item $\ls2{\,\spor G}_2(3^{2m+1})$ with $m\geq1$ and $p=3$.
\item $\PSL_{3}(4)$ or $\spor M_{11}$ and $p=3$.
\item $\ls2{\,\spor F}_4(2)'$ or $\spor{McL}$ and $p=5$.
\item $\spor J_4$ and $p=11$.
\end{enumerate}
\end{thm}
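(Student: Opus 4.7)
\medskip

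\noindent\textbf{Proof plan.} My approach relies fundamentally on the classification of finite simple groups, since no purely local argument is known to isolate this list. The starting point is the equivalence, already recalled in Section~\ref{sec:et}: a Sylow $p$-subgroup $P$ of $G$ is T.I. if and only if $N_G(P)$ is strongly $p$-embedded in $G$. Hence the task reduces to determining which nonabelian finite simple groups admit a strongly $p$-embedded subgroup whose Sylow $p$-subgroup is noncyclic, and this in turn reduces the problem to inspecting the (well-known) list of simple groups with a strongly $p$-embedded subgroup, as provided by CFSG together with the work of Bender, Suzuki and others.

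Given that list, I would split the verification along the families appearing in CFSG. First, for groups of Lie type in the defining characteristic $p$, the Sylow $p$-subgroup coincides with the unipotent radical $U$ of a Borel subgroup $B$, and the Bruhat decomposition makes T.I.-ness of $U$ equivalent to the Lie rank being $1$. This yields exactly the families $\PSL_{2}(q)$, $\PSU_{3}(q)$, $\tw{2}\spor{B}_{2}(2^{2m+1})$ and $\tw{2}\spor{G}_{2}(3^{2m+1})$; imposing that $P$ be noncyclic gives the parameter restrictions stated in (1)--(4). Second, for alternating groups, a direct inspection of cycle structures in $\fA_{n}$ shows that a noncyclic Sylow $p$-subgroup is T.I. only when the group already occurs as a $\PSL_{2}(q)$ via an exceptional isomorphism, so no new groups arise. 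Third, for sporadic simple groups, one checks case by case from the \Atlas{} (or from the subgroup structure of the maximal subgroups containing $N_G(P)$) which Sylow $p$-subgroups are noncyclic and T.I.; only $\spor M_{11}$ at $p=3$ and $\spor J_{4}$ at $p=11$ survive.

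The remaining and genuinely delicate part is the cross-characteristic case for groups of Lie type, namely showing that a group of Lie type in characteristic $r\neq p$ has noncyclic T.I. Sylow $p$-subgroups in only the three exceptional instances $\PSL_{3}(4)$ at $p=3$, $\tw{2}\spor{F}_{4}(2)'$ at $p=5$ and $\spor{McL}$ at $p=5$ (the last being more naturally grouped with the sporadics). Here the strategy is to use the detailed analysis of Sylow $p$-normalisers in classical and exceptional groups: one exhibits, whenever possible, two non-conjugate elementary abelian $p$-subgroups of $P$ of rank $\geq 2$ whose fusion is controlled by a proper parabolic or Levi subgroup, contradicting T.I.-ness. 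This eliminates all but finitely many small parameter values, which are then dispatched individually. This step is expected to be the main obstacle, since it requires combining the generic rank-$2$ construction with a careful treatment of the low-rank small-field exceptions where the generic argument breaks down; the three surviving cases are precisely those small exceptions.

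Once each family has been treated, assembling the results yields exactly the list (1)--(7), completing the classification.
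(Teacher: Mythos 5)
The paper offers no proof of this statement: it is imported verbatim from Michler \cite[Proposition~4.6]{MICHLER} and used as a black box, so there is no internal argument to measure your attempt against. Judged on its own terms, your plan follows the standard CFSG route, but it contains one genuine logical gap in the very first reduction.

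You pass from ``$P$ is T.I.'' to ``$G$ admits a strongly $p$-embedded subgroup with noncyclic Sylow $p$-subgroup'' and propose to read off the answer from the CFSG list of such groups. These two conditions are not equivalent. The correct equivalence (the one recalled in Section~\ref{ssec:prevresults} of the paper) is that $P$ is T.I. if and only if $N_G(P)$ \emph{itself} is strongly $p$-embedded; a group can possess a strongly $p$-embedded subgroup $H\supsetneq N_G(P)$ while $P$ fails to be T.I. The discrepancy is not hypothetical: the CFSG list of simple groups with a strongly $p$-embedded subgroup and $p$-rank at least $2$ contains, in addition to the groups of the theorem, the alternating groups $\fA_{2p}$ for $p\geq 5$ and $\spor{Fi}_{22}$ for $p=5$. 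Neither has T.I. Sylow $p$-subgroups --- in $\fA_{2p}$ two distinct elementary abelian Sylow $p$-subgroups can share the cyclic group generated by a single $p$-cycle, and $\spor{Fi}_{22}$ is conspicuously absent from the list you are trying to prove (the paper treats $\spor{Fi}_{22}$ at $p=5$ by entirely different means). So your plan, executed as written, produces a strictly larger list; it must be supplemented by a final pass verifying, for each candidate, that $N_G(P)$ is strongly $p$-embedded and discarding the failures.

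Two secondary points. First, in the cross-characteristic paragraph, exhibiting two non-conjugate elementary abelian subgroups of $P$ does not contradict T.I.-ness; what you need is a nontrivial element of $P$ lying in two distinct Sylow $p$-subgroups, equivalently a nontrivial $Q\leq P$ with $N_G(Q)\not\leq N_G(P)$. Second, the proposal is a programme rather than a proof: the step you yourself identify as the main obstacle (eliminating the generic cross-characteristic cases) is only announced, and is in any case redundant once the strongly $p$-embedded classification is invoked correctly, since that classification already confines the cross-characteristic and sporadic possibilities to a short finite list on which T.I.-ness can be checked directly.
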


\smallskip

We are now ready to combine both Sections~\ref{sec:et}
and~\ref{sec:sporadic} in order to investigate endotrivial modules for
the sporadic groups and their covers. 
We start with the case $p=2$ and whenever we think it more appropriate, we
use the algebra softwares MAGMA~\cite{MAGMA} or GAP~\cite{GAP4,CTblLib}. For
the clarity of exposition, we summarise the results in 
Table~\ref{tbl:sumup} and Table~\ref{tbl:cyclic} in Section~\ref{sec:tables}.

\section{Endotrivial modules for sporadic groups in characteristic
  $2$}\label{sec:sporadic2} 

In this section, we prove the results stated in Section~\ref{sec:tables}
for a field $k$ of characteristic $2$. 
The $2$-local structure of simple groups has  been thoroughly analysed.
 In particular, by a celebrated result due to
Brauer and Suzuki it is well-known that a Sylow $2$-subgroup of a simple
group cannot be generalised quaternion (\cite{BS59}), while
\cite[Corollary~2, p.~144]{Suz86} asserts that a Sylow $2$-subgroup of a
nonabelian simple group cannot be cyclic. In view of these results it
seems reasonable to believe that if $G$ is a nonabelian simple group
then $T(G)$ has torsion-free rank one in characteristic $2$. We can prove
slightly better.

\begin{prop}\label{prop:TFchar2}
Let $G$ be a finite quasi-simple group with $G/Z(G)$ sporadic simple,
then the torsion-free rank of $T(G)$ is one, that is,
$TF(G)=\langle\om\rangle\cong\IZ$. 
\end{prop}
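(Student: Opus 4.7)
The plan is to invoke Lemma~\ref{lem:ranktf}, which expresses the torsion-free rank $n_G$ as the number of conjugacy classes of maximal elementary abelian $2$-subgroups of rank $2$ in $G$, incremented by $1$ when the $2$-rank of $G$ exceeds $2$. Consequently it suffices to show that either the $2$-rank of $G$ is $2$ and there is a unique conjugacy class of such maximal rank-$2$ subgroups, or else the $2$-rank of $G$ is at least $3$ and every elementary abelian subgroup of rank $2$ embeds into one of strictly larger rank.

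The first step is to dispose of the ``large'' sporadic groups via Corollary~\ref{cor:torsionfreerank1}. For every sporadic simple group $H$ whose $2$-rank exceeds $4$ (a long list including $\spor{M}_{24}$, $\spor{J}_{3}$, $\spor{J}_{4}$, $\spor{Co}_{i}$, $\spor{HN}$, $\spor{Fi}_{22}$, $\spor{Fi}_{23}$, $\spor{Fi}_{24}'$, $\spor{Th}$, $\spor{Suz}$, $\spor{B}$, $\spor{M}$ and others), the corollary applied to $H$ immediately gives $n_{H}=1$. By Lemma~\ref{lem:cext}(3), this transports to any cover $G$ with $|Z(G)|$ odd; when $|Z(G)|$ is even, the $2$-rank of $G$ is at least that of $H$ and hence still exceeds $4$, so Corollary~\ref{cor:torsionfreerank1} applies to $G$ directly.

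The residual cases correspond to those sporadic simple groups $H$ of $2$-rank at most $4$, which an inspection of the \textsc{Atlas}~\cite{ATLAS} limits to $\spor{M}_{11}$, $\spor{M}_{12}$, $\spor{J}_{1}$, $\tw{2}\spor{F}_{4}(2)'$, $\spor{M}_{22}$, $\spor{J}_{2}$, $\spor{M}_{23}$, $\spor{HS}$, $\spor{McL}$, and $\spor{Ly}$, together with their covers. For each such $G$ I would examine a Sylow $2$-subgroup $P$ explicitly, either by hand from its known structure or by computer calculation with \textsc{Gap}~\cite{GAP4}. The Sylow of $\spor{M}_{11}$ is semi-dihedral of order $16$ and admits a unique conjugacy class of Klein four subgroups, so $n_{\spor{M}_{11}} = 1$. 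For each of the remaining groups (whose $2$-rank is $3$ or $4$) one verifies that every Klein four subgroup of $P$ is contained in an elementary abelian subgroup of rank at least $3$, so no rank-$2$ maximal elementary abelians exist, whence $n_{H}=1$.

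The main obstacle will be the explicit case-by-case verification for the small proper covering groups $G$ with $2 \mid |Z(G)|$, such as $2.\spor{M}_{12}$, $2.\spor{M}_{22}$, $4.\spor{M}_{22}$, $2.\spor{J}_{2}$, $2.\spor{HS}$, and $2.\spor{McL}$, where the unique central involution $z$ modifies the poset of elementary abelians: every maximal elementary abelian of $P$ of rank $\geq 2$ must contain $z$, so counting in $P$ reduces to counting in $P/\langle z\rangle$. This reduction, combined with direct calculations in \textsc{Gap} on the Sylow $2$-subgroup of each cover, suffices to dispatch the remaining cases.
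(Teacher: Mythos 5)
Your overall strategy coincides with the paper's: apply Lemma~\ref{lem:ranktf}, use Theorem~\ref{thm:connected}/Corollary~\ref{cor:torsionfreerank1} to dispose of everything of $2$-rank greater than $4$, settle $\spor{M}_{11}$ via its semi-dihedral Sylow $2$-subgroup, and check by hand that the remaining groups of $2$-rank $3$ or $4$ have no maximal elementary abelian subgroup of order $4$. However, two of your reduction steps fail as stated. First, for a cover $G$ with $2\mid|Z(G)|$ you assert that the $2$-rank of $G$ is at least that of $H=G/Z(G)$, so that Corollary~\ref{cor:torsionfreerank1} transfers. Central extensions by a group of even order can \emph{decrease} the $2$-rank, because the preimage of an elementary abelian subgroup need not be elementary abelian (the basic example is $Q_8$ over $2^2$); one only has $\rk{G}\le \rk{H}+1$. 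The phenomenon occurs among the very groups at hand: the paper's residual list contains $4.\nspor{M}_{22}$ but not $2.\nspor{M}_{22}$, i.e.\ $2.\nspor{M}_{22}$ has $2$-rank $5$ while its degree-$2$ central extension $4.\nspor{M}_{22}$ has $2$-rank at most $4$. So the $2$-rank of each quasi-simple cover has to be read off or computed directly (the paper inspects \cite[Table 5.6.1]{GLS3}); it cannot be imported from the simple quotient. The same confusion undermines your reduction for the small covers: every maximal elementary abelian subgroup of $P$ does contain the central involution $z$, but maximal elementary abelian subgroups of $P$ do not correspond to those of $P/\langle z\rangle$ — a maximal elementary abelian of the quotient need not lift to one of $P$, and maximality is not preserved in either direction. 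Since you ultimately fall back on explicit machine computation for these cases (as the paper does with its ``routine verification''), this part is reparable, but the stated reduction should not be relied upon.

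Second, your inventory of the residual cases is incorrect: $\spor{J}_3$, $\spor{Co}_3$ and $\spor{O'N}$ (together with their covers $3.\nspor{J}_3$ and $3.\nspor{O'N}$) have $2$-rank $4$ and appear in the paper's list of groups requiring the maximal-Klein-four check, but they are absent from yours, so as written they are never verified. (Conversely, including $\tw{2}\spor{F}_4(2)'$, whose $2$-rank exceeds $4$, is harmless but unnecessary.) The case analysis is therefore incomplete until the omitted groups are added and checked.
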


\begin{proof}
Using Theorems~\ref{thm:connected} and~\ref{thm:max-tf}, we need to show
that if $G$ has a nonabelian Sylow $2$-subgroup $P$ of rank $2,3$ or
$4$, then $P$ is either semi-dihedral, or $P$ has rank greater than $2$
and no maximal elementary abelian subgroup of order $4$. 
By inspection of \cite[Table 5.6.1]{GLS3}, we are left with the
following groups.
If the $2$-rank is $2$, then $G=\spor M_{11}$ and has a semi-dihedral
Sylow $2$-subgroup of order $16$. It follows from~\cite[Thm.~6.5]{CMT2}
that $n_{G}=1$. 
Otherwise $G$ is one of $\spor{M}_{12},2.\nspor{M}_{12},
\spor{M}_{22}, 4.\nspor{M}_{22},3.\nspor{M}_{22}, 12.\nspor{M}_{22},
\spor{M}_{23}, \spor{J}_{1}, \spor{J}_{2}, 2.\nspor{J}_{2},\\ J_{3},
3.\nspor{J}_{3}, \spor{Co}_{3}, \spor{HS}, \spor{McL},
3.\nspor{McL},\spor{Ly},\spor{O'N},3.\nspor{O'N}$ and its $2$-rank is
$3$ or $4$. A routine verification of each of these cases shows that a
Sylow $2$-subgroup of $G$ has no maximal elementary abelian subgroup of
order $4$.
\end{proof}

By Lemma~\ref{lem:omnibus} and the specific $2$-local structure of
sporadic simple groups and their covers, the torsion subgroup of $T(P)$
is often trivial, and the Sylow $2$-subgroups are often self-normalising. This
leads us to our next result.

\begin{lemma}\label{lem:sn2sylow}
If $G$ is one of the groups $\spor M_{12}$,
$2.\nspor{M}_{12}$, $\spor M_{22}$, $2.\nspor{M}_{22}$,
$4.\nspor{M}_{22}$, $6.\nspor{M}_{22}$, $12.\nspor{M}_{22}$,
$2.\spor{J}_{2}$, $\spor M_{23}$, $\spor M_{24}$, $\spor{HS}$,
$2.\nspor{HS}$,  $\spor{McL}$, $\spor{He}$, $\spor{Ru}$, $2.\nspor{Ru}$,
$\spor{O'N}$, $\spor{Co}_{3}$, $\spor{Co}_{2}$, $\spor{Fi}_{22}$,
$2.\nspor{Fi}_{22}$, $6.\nspor{Fi}_{22}$, $\spor{Ly}$, $\spor{Th}$,
$\spor{Fi}_{23}$, $\spor{Co}_{1}$, $2.\nspor{Co}_{1}$, $\spor J_{4}$,
$\spor{Fi}'_{24}$, $\spor B$, $2.\nspor{B}$, $\spor M$, or
$\tw{2}\spor{F}_{4}(2)'$, then $TT(G)=\{[k]\}$. 
\end{lemma}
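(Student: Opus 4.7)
The strategy is to split the long list into two parts and apply two distinct tools from Section~\ref{sec:et}. For the sporadic simple groups $H$ in the list (those with trivial centre), I would invoke Lemma~\ref{lem:omnibus}(5), which requires verifying that a Sylow $2$-subgroup $P$ of $H$ is self-normalising and is neither cyclic, semi-dihedral, nor generalised quaternion. For the proper covers $G=m.H$ in the list (all of which have $2\mid m$), I would apply Lemma~\ref{lem:cext}(2): it reduces the problem to checking that a Sylow $2$-subgroup of $G$ has $T(P)$ torsion-free, equivalently that $P$ is not cyclic, semi-dihedral, nor generalised quaternion.

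For the simple groups, the constraint on the isomorphism type of $P$ is essentially handled in the proof of Proposition~\ref{prop:TFchar2}: the only sporadic simple group whose Sylow $2$-subgroup is semi-dihedral is $\spor M_{11}$, which is absent from our list, and the Brauer--Suzuki theorem together with Suzuki's theorem excludes cyclic and generalised quaternion Sylow $2$-subgroups from any nonabelian simple group. The self-normalising condition $N_H(P)=P$ is then verified case by case, either from the list of maximal $2$-local subgroups recorded in the \Atlas\ and in \cite{GLS3}, or via direct computation with \GAP; the absence from our list of groups such as $\spor J_1$ (where $N_{J_1}(P)/P\cong 7{:}3$) reflects precisely the failure of this last condition, and such cases will have to be treated by other methods elsewhere in the paper.

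For a proper cover $G=m.H$ with $2\mid m$, a Sylow $2$-subgroup $P$ of $G$ is an extension of a Sylow $2$-subgroup $Q$ of $H$ by the nontrivial $2$-part $A_2$ of $Z(G)$. For every simple $H$ occurring in a cover appearing in the list, $Q$ has $2$-rank at least~$3$; since $1\neq A_2$ is central in $P$, this forces $P$ itself to have $2$-rank at least~$3$, which already rules out the cyclic and generalised quaternion cases (both of $2$-rank~$1$) and the semi-dihedral case (of $2$-rank~$2$). Hence $T(P)$ is torsion-free and Lemma~\ref{lem:cext}(2) yields $TT(G)=\{[k]\}$.

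The main obstacle is not mathematical depth but the case-by-case verification that $N_H(P)=P$ for each simple group in the list, particularly for the larger cases ($\spor B,\spor M,\spor{Fi}'_{24},\spor{Co}_1,\spor J_4,\ldots$), where one must rely either on the published analysis of their $2$-local structure in \cite{GLS3} or on computer calculations with the character table libraries of \GAP. The Tits group $\tw2\spor F_4(2)'$ and the Mathieu groups $\spor M_{23},\spor M_{24}$ can be handled directly from the \Atlas\ lists of maximal subgroups, after which the lemma follows uniformly.
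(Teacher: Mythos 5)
Your proposal follows essentially the same route as the paper's proof: the authors likewise dispose of the groups with self-normalising Sylow $2$-subgroups via Lemma~\ref{lem:omnibus}(5), citing \cite[Table 1]{AnWil} for the self-normalisation check (rather than a case-by-case \Atlas/\GAP\ verification), and handle the covers possessing a non-trivial normal central $2$-subgroup via Lemma~\ref{lem:cext}(2); the exclusion of cyclic, generalised quaternion and semi-dihedral Sylow $2$-subgroups is exactly as you describe. The only organisational difference is that the paper reads off self-normalisation for most of the double covers directly from the same table, whereas you route \emph{all} proper covers through Lemma~\ref{lem:cext}(2) --- both work, and your split is arguably cleaner. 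One intermediate claim in your cover argument is false as stated, though easily repaired: a central extension of a group of $2$-rank at least $3$ by $C_2$ need \emph{not} itself have $2$-rank at least $3$ (for instance $2^{1+4}_{-}\cong D_{8}*Q_{8}$ has $2$-rank $2$ while its central quotient is elementary abelian of rank $4$). What you actually need --- and what does follow --- is only that $P$ is not cyclic, generalised quaternion or semi-dihedral: each of those groups has centre of order $2$ and its quotient by that centre has $2$-rank at most $2$, contradicting the fact that $P/A_2\cong Q$ has $2$-rank at least $3$. With that one sentence corrected, your argument is complete and matches the paper's.
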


\begin{proof}
Let $P\in\Syl_{2}(G)$. 
If $G$ is one of  $\spor M_{12}$, $2.\nspor{M}_{12}$,
$\spor M_{22}$, $2.\nspor{M}_{22}$, $4.\nspor{M}_{22}$, $\spor M_{23}$,
$\spor M_{24}$, $\spor{HS}$, $2.\nspor{HS}$,  $\spor{McL}$, $\spor{He}$,
$\spor{Ru}$, $2.\nspor{Ru}$, $2.\nspor{Suz}$, $6.\nspor{Suz}$,
$\spor{O'N}$, $\spor{Co}_{3}$, $\spor{Co}_{2}$, $\spor{Fi}_{22}$,
$2.\nspor{Fi}_{22}$, $6.\nspor{Fi}_{22}$, $\spor{Ly}$, $\spor{Th}$,
$\spor{Fi}_{23}$, $\spor{Co}_{1}$, $2.\nspor{Co}_{1}$, $\spor J_{4}$,
$\spor{Fi}'_{24}$, $\spor B$, $2.\nspor{B}$, $\spor M$, or
$\tw{2}\spor{F}_{4}(2)'$, then one can read from \cite[Table 1]{AnWil}
that $P$ is self-normalising.  
Therefore, $TT(G)=\{[k]\}$ by Lemma~\ref{lem:omnibus}. If $G$ is one of
$6.\nspor{M}_{22}$, $12.\nspor{M}_{22}$, $2.\spor{J}_{2}$,
$2.\nspor{Suz}$, $6.\nspor{Suz}$, or $6.\nspor{Fi}_{22}$ then $G$ has a
non-trivial normal $2$-subgroup and so $TT(G)=\{[k]\}$ by
Lemma~\ref{lem:cext}.  
\end{proof}

\begin{lemma}\label{lem:coverschar2}
If $G$ is one of the $3$-fold covers $3.\nspor{M}_{22}$,
$3.\nspor{Suz}$, $3.\nspor{O'N}$,  $3.\spor{Fi}_{22}$, or
$3.\spor{Fi}'_{24}$, then $G$ has no faithful endotrivial
module. Therefore $T(G)\cong T(G/Z(G))\cong \IZ$ via inflation.  
\end{lemma}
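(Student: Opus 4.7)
The plan is to prove that $\Inf:T(G/Z(G))\to T(G)$ is an isomorphism, equivalently that $G$ has no faithful indecomposable endotrivial module. Set $Z:=Z(G)\cong C_{3}$ and $H:=G/Z$. Since $|Z|=3$ is coprime to $p=2$, Lemma~\ref{lem:inf}(1) gives a well-defined homomorphism $\Inf:T(H)\to T(G)$, and inflation preserves projectivity under this coprimality (as can be seen by restriction to a Sylow $2$-subgroup, which maps isomorphically to its image in $H$), so $\Inf(\Omega_H)=\Omega_G$. By Lemma~\ref{lem:sn2sylow} for $H\in\{\spor M_{22},\spor{O'N},\spor{Fi}_{22},\spor{Fi}'_{24}\}$ and by a parallel computation for $H=\spor{Suz}$, we have $TT(H)=\{[k]\}$, whence $T(H)=\langle\Omega_H\rangle\cong\IZ$ by Proposition~\ref{prop:TFchar2}. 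In particular $\Inf$ is injective, with image $\langle\Omega_G\rangle=TF(G)$. Since $T(G)=TT(G)\oplus\langle\Omega_G\rangle$, surjectivity of $\Inf$ reduces to the claim $TT(G)=\{[k]\}$.

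To establish $TT(G)=\{[k]\}$, use the injection $T(G)\hookrightarrow T(N_G(P))$ of Lemma~\ref{lem:omnibus}(\ref{omni2}), for $P\in\Syl_{2}(G)$. Because the Sylow $2$-subgroup $\bar P$ of $H$ is self-normalising in each case, $N_G(P)=PZ$; centrality of $Z$ combined with $\gcd(|P|,|Z|)=1$ then forces $N_G(P)=P\times Z$. The decomposition of $k[P\times Z]$-modules by the three characters of $Z$ yields $T(P\times Z)\cong T(P)\times X(Z)$, and since $P$ has $2$-rank at least~$3$, $T(P)$ is torsion-free. Hence $TT(G)\hookrightarrow TT(P\times Z)=X(Z)\cong C_{3}$, and therefore $TT(G)\in\{\{[k]\},C_{3}\}$. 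If $TT(G)=C_{3}$, its non-trivial element corresponds by Green correspondence to a non-trivial $\mu\in X(N_G(P))$ whose $kG$-Green correspondent $V_{\mu}$ is faithful (since $\mu|_{Z}\neq1$).

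The main obstacle is to rule out the endotriviality of such $V_{\mu}$. By Theorem~\ref{lem:Greenchars}, $V_{\mu}$ is endotrivial if and only if $\chi_{\hat V_{\mu}}(x)=1$ for every $2$-element $x\neq1$ of $G$. Using the non-negativity and vertex criteria of Lemma~\ref{lem:LandScott}, the character $\chi_{\hat V_{\mu}}$ is identified via character-theoretic Green correspondence as the unique trivial source constituent of $\Ind_{N_G(P)}^{G}(\hat\mu)$ with vertex~$P$, so it can be extracted from the ordinary character table of~$G$. A direct computation using the $\GAP$ Character Table Library shows, for each of the five groups, that $\chi_{\hat V_{\mu}}$ takes a value different from~$1$ at some $2$-singular class. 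This rules out the existence of a faithful endotrivial $kG$-module, yielding $TT(G)=\{[k]\}$ and hence the isomorphism $T(G)\cong T(H)\cong\IZ$ realised by inflation.
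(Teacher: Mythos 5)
Your overall strategy is reasonable in outline but differs from the paper's, and as written it has two genuine gaps. First, the claim that the Sylow $2$-subgroup of $H$ is self-normalising ``in each case'' is false for $H=\spor{Suz}$: there $|N_H(\bar P):\bar P|=3$, so for $G=3.\nspor{Suz}$ you do not get $N_G(P)=P\times Z$ but a group with $X(N_G(P))$ of order $9$, and your bound $TT(G)\hookrightarrow X(Z)\cong C_3$ breaks down; you would then have to treat all eight non-trivial characters of $X(N_G(P))$, including the non-faithful ones, for which you need the separate fact that $TT(\spor{Suz})=\{[k]\}$ --- itself a non-trivial computation precisely because the Sylow $2$-subgroup is not self-normalising. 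Second, and more seriously, the decisive step --- ``a direct computation shows that $\chi_{\hat V_\mu}$ takes a value different from $1$'' --- is asserted rather than performed, and it is not routine: to apply Theorem~\ref{lem:Greenchars} you must first determine which constituents of $\Ind_{N_G(P)}^{G}(\hat\mu)$ actually make up the character of the Green correspondent, and ``the unique trivial source constituent with vertex $P$'' is not something one reads off an ordinary character table; pinning this down is exactly the delicate point in the paper's computations elsewhere. Note also that Theorem~\ref{lem:Greenchars} tests values on non-trivial $2$-elements, not on arbitrary $2$-singular classes.

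The paper avoids both problems with a softer argument: a faithful endotrivial module lifts to an $\cO G$-lattice whose character $\chi$ has all irreducible constituents in faithful blocks and satisfies $|\chi(g)|=1$ for every $2$-singular $g$ by Theorem~\ref{thm:lift}; but for each of the five groups there is a $2$-singular class $C$ on which every faithful $\psi\in\Irr(G)$ takes values in $m\IZ$ for a fixed $m\neq\pm1$ (usually $m=0$), so $\chi(c)\in m\IZ$ cannot be a root of unity. This requires no Green correspondence, no identification of trivial source constituents, and no analysis of $N_G(P)$, and it excludes all faithful endotrivial modules at once rather than only the trivial source ones. If you wish to keep your approach, you must (i) repair the $N_G(P)$ analysis for $3.\nspor{Suz}$ and (ii) actually exhibit, for each group, the character data disqualifying the candidate Green correspondents.
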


\begin{proof}
Assume $M$ is a faithful endotrivial $kG$-module and let $\chi$ be the 
ordinary character of $G$ belonging to a lift of $M$. Then all
irreducible constituents of $\chi$ lie in a common faithful block of
$kG$ and by Theorem~\ref{thm:lift} the values of $\chi$ on $2$-singular
conjugacy classes are roots of unity. However, it can be read from the
GAP Character Table Libraries \cite{CTblLib} (or the $\Atlas$
\cite{ATLAS}) that $G$ has a $2$-singular class $C$ such that for all
faithful characters $\psi\in\Irr(G)$ we have $\psi(c)\in m\IZ$ with
$m\in \IZ\setminus\{\pm 1\}$, $c\in C$, hence a contradiction. 
 More precisely for $G=3.\nspor{M}_{22}$, take $(m,C)=(2,6c)$,
 for $G=3.\nspor{Suz}$ take $(m,C)=(0,6q)$, for $G=3.\nspor{O'N}$
 take $(m,C)=(0,6c)$, for $G=3.\spor{Fi}_{22}$ take
 $(m,C)=(0,6w)$, for $G=3.\spor{Fi}'_{24}$ take
 $(m,C)=(0,6ad)$. (Notation for conjugacy classes is that of GAP
 \cite{CTblLib}.) 
\end{proof}

We are left with the computation of the torsion subgroup $TT(G)$ in
characteristic $2$ for the following groups: 
$\spor M_{11}$, $\spor{J}_{1}$, $\spor{J}_{2}$,
$\spor{J}_{3}$, $3.\nspor{J}_{3}$, $3.\nspor{McL}$, $\spor{Suz}$,
$\spor{HN}$. We proceed case by case. 

\subsection{The Mathieu group $\spor M_{11}$}\label{sec:mathieu2}\

The group $G=\spor M_{11}$ has a semi-dihedral Sylow $2$-subgroup $P$ of
order $16$ and $P=N_G(P)$. From~\cite[Theorems~6.4 and~6.5]{CMT2}, we
know that $T(\spor M_{11})=\langle\om,[M]\rangle\cong\Z\oplus\Z/2$
where $M$ is the $kG$-Green correspondent of the translated relative
syzygy $\om(\om_{P/C}(k))$ for a noncentral subgroup $C$ of $P$ of order
$2$. Moreover $M$ is self-dual. This module was explicitly described by Kawata and 
Okuyama  \cite[Thm. 5.1]{KAW} as the relative syzygy
$M\cong\om(\om_{G/C}(k))$. Computations with MAGMA show that $\dim_{k}(M)=41$.

\subsection{The Janko groups: $\spor{J}_{1}$, $\spor{J}_{2}$,
$\spor{J}_{3}$ and $3.\nspor{J}_{3}$}\label{sec:janko2}\

Let $G=\spor J_{1}$ and $P$ a Sylow $2$-subgroup of $G$. 
Then $N_G(P)\cong P:7:3$ (cf \cite{ATLAS}) and $X(N_G(P))\cong \IZ/3$ is
generated by a $1$-dimensional $kN$-module $\lambda$ with
$\lambda^{\otimes3}=k$. 
The socle and Loewy series of the $kG$-Green correspondent
$\Gamma(\lambda)$ of $\lambda$ have been computed in \cite[Cor
5.8]{LaMi}, from which we gather that $\Gamma(\lambda)$ has dimension
$133$ and affords the ordinary character $\chi_{12}$. By Lemma~\ref{basics}(7),
$\Gamma(\lambda)$ cannot be endotrivial as $133\not\equiv 1\pmod{|P|}$. 
Therefore $TT(\spor J_1)=\{[k]\}$.

The groups $\spor J_2$ and $\spor J_3$ have very similar $2$-local
structure. Put $G$ for any of these Janko groups.
The group $G$ has a Sylow $2$-subgroup $P=(Q_8*D_8):2^2$ of order $2^7$
and $2$-rank $4$. The index of $P$ in its normaliser $N$ is $3$, so that
$X(N)\cong \IZ/3$. Moreover $G$ has a maximal subgroup $H$ containing $N$
and $H$ is isomorphic to $(Q_8*D_8):\fA_5$.
The permutation module $k[H/P]$ gives all the indecomposable
trivial source $kH$-modules up to isomorphism and MAGMA computations
give that $k[H/P]$ splits into a
direct sum of modules with dimensions $1,4,5,5$.
Therefore $k$ is the only torsion endotrivial $kH$-module and a fortiori
$TT(G)=\{[k]\}$ for $G=\spor J_2$ and $G=\spor J_3$.
For $G=3.\spor J_3$ we read from \cite{ATLAS} that $G$ has a maximal
subgroup $C(2A)$ which is the centraliser of the central involution of
$P$. Using MAGMA, we obtain that
$C(2A)\cong3\times(Q_8*D_8).\fA_5$ and its derived subgroup is
$(Q_8*D_8).\fA_5$. Therefore $X(C(2A))\cong\IZ/3$ and
$TT(C(2A))\cong\Z/3$. Referring the reader to the method developed in
\cite{Balmer}, as explained in Section~\ref{sec:TT}, a
further MAGMA computation shows that $TT(3.\nspor{J}_{3})\cong\IZ/3$. In
other words the Green correspondents of the two non-trivial
$1$-dimensional $k[C(2A)]$-modules are endotrivial. 

\subsection{The group $3.\nspor{McL}$} \label{sec:3mcl2}\
Let $G=3.\nspor{McL}$ and $P$ a Sylow $2$-subgroup of $G$. Then $|P|=2^7$
and we have $X(N_G(P))\cong \IZ/3$ since a Sylow $2$-subgroup of
$\spor{McL}$ is self-normalising.
Because $TT(\spor{McL})=\{[k]\}$ by Lemma~\ref{lem:sn2sylow}, 
a non-trivial trivial source endotrivial $kG$-module must be faithful.
We see from the GAP Character Table Libraries \cite{CTblLib} that
$G$ has three $2$-blocks of full defect: the principal block $B_{0}$ and
two faithful blocks $B_{7}$ and $B_{8}$. 
However all simple modules in $B_{7}$ and $B_{8}$ have even dimension,
so that by Lemma~\ref{basics}(7) none of these blocks can contain
endotrivial modules, whence $TT(3.\nspor{McL})=\{[k]\}$.

\subsection{The group $\spor{Suz}$}\label{sec:suz2}\

The group $\spor{Suz}$ has a Sylow $2$-subgroup $P$ of order $2^{13}$,
rank $6$ and index $3$ in its normaliser $N$, so that $X(N)\cong \IZ/3$. Furthermore $N$ is
contained in a maximal subgroup $H$ of $\spor{Suz}$ of the form
$H = 2^{1+6}_{-}.\spor U_4(2)$ and $|H:N|=135$. Let $1_{10}\in X(N)$ be 
the 10-th linear character of $N$ (according to the GAP character tables \cite{CTblLib} notation), 
which has order $3$. Then 
$$\Ind_{N}^{H}(1_{10})=\chi_{8}+\chi_{12}+\chi_{13}+\chi_{18}\,,$$
where $\chi_{8}, \chi_{12}, \chi_{13}, \chi_{18}\in\Irr(H)$ have degree $15,30,30$ 
and $60$ respectively. Since all four of these characters have classes $2a$ 
and $2b$ in their kernels, the $kH$-Green correspondent $\Gamma_{H}(1_{10})$ 
cannot be endotrivial by Theorem~\ref{lem:Greenchars}. Therefore, neither is the $kG$-Green correspondent $\Gamma_{G}(1_{10})$ and we obtain $TT(\spor{Suz})=\{[k]\}$.

\subsection{The group $\spor{HN}$ (also denoted $\spor
  F_5$ or $\spor
  F_{5+}$ )}\label{sec:hn2}\ 

The group $G=\spor{HN}$ has a Sylow $2$-subgroup $P$ of order
$2^{14}$ and $G$ has two maximal subgroups which are
centralisers of involutions. We gather from \cite{ATLAS} that the
centraliser of a 2-central involution has the form
$H = 2^{1+8}_+.(\fA_5\times \fA_5).2$.
So $H$ must contain $N_G(P)$ and we have $TT(H)=X(H)$ since
$H$ has a non-trivial normal $2$-subgroup. Now $X(H)=H/PH'=1$ because
$H'\geq (\fA_5\times \fA_5)'=\fA_5\times \fA_5$. Therefore, by injectivity of
the restriction map $T(G)\to T(H)$ we conclude that $TT(G)=\{[k]\}$.


\section{The torsion-free rank of $TF(G)$ in odd characteristic and
  $p$-rank $\geq 2$}\label{sec:TF} 

Throughout this section, we assume that $p$ is odd. We compute the
torsion-free rank $n_{G}$ of $T(G)$, for $G$ a quasi-simple group such
that $G/Z(G)$ is sporadic simple and has a noncyclic Sylow
$p$-subgroup. Recall from Lemma~\ref{lem:ranktf} that $n_G$ is equal to
the number of conjugacy classes of maximal elementary abelian
$p$-subgroups of rank $2$ if $G$ has $p$-rank~$2$, or that number plus
one if $G$ has $p$-rank greater than $2$. 

\subsection{Sporadic groups with $p$-rank $2$}\label{sec:prank2}
First we assume that the $p$-rank is exactly $2$. If $P\in\Syl_{p}(G)$,
then a Sylow $p$-subgroup $P$ of $G$ is either elementary abelian of rank $2$, or
$P$ is an extraspecial $p$-group of order $p^3$ and exponent $p$ (see
\cite{GLS3}).

Suppose that $P$ is elementary abelian. Then $n_{G}=1$ and
$TF(G)=\langle\Omega\rangle\cong \IZ$. This occurs in the following cases:
$$\begin{aligned}\label{al:2spor}
\hbox{characteristic $3$,}\quad&\spor M_{11}\;,\;\spor
M_{22}\;,\;2.\nspor M_{22}\;,\;4.\nspor M_{22}\;,\;\spor
M_{23}\;,\;\spor{HS}\;,\;2.\spor{HS},\\
\hbox{characteristic $5$,}\quad&\spor J_2\;,\;2.\spor
J_2\;,\;\spor{He}\;,\;\spor{Suz}\;,\;2.\nspor{Suz}\;,\;
3.\nspor{Suz}\;,\;6.\nspor{Suz}\;,\;\\
&\spor{Fi}_{22}\;,\;2.\nspor{Fi}_{22}\;,\;
3.\nspor{Fi}_{22}\;,\;
6.\nspor{Fi}_{22}\;,\;\spor{Fi}_{23}\;,\;\spor{Fi}'_{24}\;,\;3.\spor{Fi}'_{24}\;,\;\tw2\spor F_4(2)',\\
\hbox{characteristic $7$,}\quad&\spor{Co}_{1}\;,\;
2.\nspor{Co}_{1}\;,\;\spor{Th}\;,\;\spor B\;,\;2.\spor B,\\
\hbox{characteristic $11$,}\quad&\spor M.
\end{aligned}$$

Suppose now that $P$ is extraspecial of order $p^3$ and exponent $p$, that is $P=p_{+}^{1+2}$ according to
Notation~\ref{ssec:nota}. We
use the presentation 
$$P=\langle x,y,z\,|\, x^{p}=y^{p}=z^{p}=[x,z]=[y,z]=1, [x,y]=z \rangle$$
where $[x,y]=xyx^{-1}y^{-1}$. Then $Z(P)=P'=\Phi(P)=\langle z\rangle$ 
has order $p$, where $\Phi(P)$
is the Frattini subgroup. The outer automorphism group 
of $P$ is isomorphic to $\GL_{2}(p)$ and the matrix 
$\bigl(\begin{smallmatrix}r' & r \\s' & s\end{smallmatrix}\bigr)$ with
determinant $d$ corresponds to the class of automorphisms sending $x$ to
$x^{r'}y^{s'}$, $y$ to $x^{r}y^{s}$ and $z$ to $z^{d}$.

Ruiz and Viruel \cite{RV} have classified the saturated fusion systems
on $P$ providing us with the information we need in order to compute
$n_{G}$. That is, the number of $G$-conjugacy classes of
the $p+1$ elementary abelian $p$-subgroups of rank~$2$. 
We refer the reader to \cite{RV} for details, and only give the results
that apply in our context specifically.   We recall  that a $p$-subgroup $Q$ is
{\em $p$-centric} in $G$ if $Z(Q)$ is a Sylow $p$-subgroup of $C_G(Q)$,
and $Q$ is {\em $p$-radical} if $N_G(Q)/QC_G(Q)$ has no non-trivial
normal $p$-subgroup. Moreover the set of $p$-radical $p$-centric subgroups 
of  $P=p_{+}^{1+2}$ is closed under $G$-conjugation for any finite group $G$, 
and if this set is empty, then the $p$-fusion is controlled by the normaliser of $P$.

\begin{prop}\label{prop:prank2}
Let $p$ be an odd prime and $G$ be a quasi-simple group such that
$G/Z(G)$ is sporadic simple. If the $p$-rank of $G$ is $2$, then the
torsion-free part of $T(G)$ is as given in Table~\ref{tbl:spor2}.   
\end{prop}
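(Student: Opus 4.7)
The plan is to use Lemma~\ref{lem:ranktf} to reduce the computation of $n_G$ to a counting problem on the Sylow $p$-subgroup $P = p_{+}^{1+2}$, and then invoke the Ruiz--Viruel classification \cite{RV} of saturated fusion systems on $P$ to carry it out case by case.

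First I would observe that since $P$ has exponent $p$ and $Z(P)=\Phi(P)$ has order $p$, the $p+1$ maximal subgroups of $P$ are precisely its subgroups of order $p^{2}$; each contains $Z(P)$ and, being of exponent $p$, is elementary abelian of rank $2$. Because $G$ has $p$-rank $2$, none of these lies inside a larger elementary abelian subgroup of $G$, so each is maximal elementary abelian in $G$. Conversely, every maximal elementary abelian $p$-subgroup of rank $2$ in $G$ is $G$-conjugate to one of these $p+1$ subgroups, after conjugation into $P$. Thus $n_{G}$ is the number of $G$-orbits on this set of $p+1$ subgroups.

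Next, I would translate $G$-conjugacy among these subgroups into fusion-theoretic language. Counting $G$-orbits amounts to counting orbits of the fusion system $\cF:=\cF_{P}(G)$ on the set of rank-$2$ subgroups of $P$. By Alperin's fusion theorem, the relevant conjugations are generated by the normaliser automorphisms $N_{G}(Q)/QC_{G}(Q)$ as $Q$ ranges over the $\cF$-essential (equivalently, $p$-centric and $p$-radical) subgroups of $P$, together with $N_{G}(P)/PC_{G}(P) \leq \GL_{2}(p)$ acting on $P/Z(P)$. For each pair $(G,p)$ listed in Table~\ref{tbl:spor2}, I would then identify $\cF_{P}(G)$ with one of the fusion systems in \cite{RV}, read off the $\cF$-essential subgroups and the corresponding outer automorphism groups $\Out_{\cF}(Q)$, and compute the orbit count directly. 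For the covering groups with $p\nmid|Z(G)|$, Lemma~\ref{lem:cext}(3) reduces the count to that for the simple quotient.

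The main obstacle is the case-by-case identification of $\cF_{P}(G)$ and the subsequent orbit count. When $\cF$ has no essential subgroup, the fusion is controlled by $N_{G}(P)$ and the orbit count reduces to a transparent $\GL_{2}(p)$-calculation on the $p+1$ lines in $P/Z(P)$. When essential subgroups exist, one must combine this linear action with the permutation action of each $\Out_{\cF}(Q)$ on the $p+1$ lines (fixing the line corresponding to $Q$); the resulting orbit count is straightforward but has to be verified individually for each entry of Table~\ref{tbl:spor2}. Once $n_{G}$ is determined, the torsion-free part $TF(G)\cong \IZ^{n_{G}}$ follows from Lemma~\ref{lem:omnibus}(1), which ensures that $\Omega$ always contributes an infinite cyclic direct summand.
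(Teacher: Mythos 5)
Your proposal takes essentially the same route as the paper: reduce via Lemma~\ref{lem:ranktf} to counting $G$-classes among the $p+1$ rank-$2$ subgroups of $P=p_{+}^{1+2}$, then carry out the count case by case from the Ruiz--Viruel data (the paper likewise reads off the $p$-centric $p$-radical subgroups and the image of $N_{G}(P)/PC_{G}(P)$ in $\GL_{2}(p)$ from \cite{RV}, your Alperin-fusion-theorem phrasing being just a reformulation of that). The only thing to add is the case $P\cong p^{2}$ elementary abelian, which occurs for many entries of Table~\ref{tbl:spor2} and gives $n_{G}=1$ immediately, and a word on the covers with $p\mid|Z(G)|$ (e.g.\ $3.\nspor{M}_{22}$), where the Sylow subgroup changes type and Lemma~\ref{lem:cext}(3) does not apply as stated.
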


\begin{table}[htbp]
 \caption{$TF(G)$ for sporadic quasi-simple groups in $p$-rank $2$, $p>2$}
  \label{tbl:spor2}
\[\begin{array}{|c|l|l||c|l|l||c|l|l|}
\hline
     G& P& TF(G)&  G& P& TF(G)& G & P & TF(G) \cr
\hline
   \spor M_{11}& 3^2 &  \IZ&                                               \spor{McL} &5_+^{1+2}  &    \textcolor{black}{\IZ}   &     	         3.\nspor{Fi}_{22}& 5^2& \IZ \cr
   \spor M_{12} & 3_+^{1+2} &  \textcolor{black}{\IZ^{3}}  &             3.\nspor{McL} &5_+^{1+2} &   \textcolor{black}{\IZ}    & 		6.\nspor{Fi}_{22}& 5^2& \IZ \cr
  2.\nspor M_{12}& 3_+^{1+2} &  \textcolor{black}{\IZ^{3}}  &
  \spor{He}& 3_+^{1+2}& \textcolor{black}{\IZ^{2}}    &
  \spor{Th} & 5_+^{1+2} & \textcolor{black}{\IZ}\cr 
  \spor M_{22} & 3^2&  \IZ  &                                                       \spor{He}& 5^2&   \IZ &							\spor{Th} & 7^2 & \IZ  \cr
 2.\nspor M_{22}& 3^2&  \IZ  &                    				   \spor{He}&  7_+^{1+2}  & \textcolor{black}{\IZ^{3}}&			\spor{Fi}_{23}& 5^2& \IZ \cr
 3.\nspor M_{22}& 3_+^{1+2}& \textcolor{black}{\IZ} &                      \spor{Ru}& 3_+^{1+2}& \textcolor{black}{\IZ}   &				\spor{Co}_{1}  & 7^2 & \IZ \cr
  4.\nspor M_{22}& 3^2&  \IZ  &                       			   2.\nspor{Ru}& 3_+^{1+2}& \textcolor{black}{\IZ}  &			2.\nspor {Co}_{1}  & 7^2 & \IZ \cr
 6.\nspor M_{22}& 3_+^{1+2}& \textcolor{black}{\IZ}&
 \spor{Ru}& 5_+^{1+2} & \textcolor{black}{\IZ^{2}}  &
 \spor J_{4} & 3_+^{1+2} &  \textcolor{black}{\IZ}     \cr 
12.\nspor M_{22}& 3_+^{1+2}& \textcolor{black}{\IZ}  &
2.\nspor{Ru}& 5_+^{1+2} & \textcolor{black}{\IZ^{2}}  &		\spor
J_{4} & 11_+^{1+2} &  \textcolor{black}{\IZ}  \cr 
 \spor J_{2} &  3_+^{1+2} &   \textcolor{black}{\IZ} &                   	   \spor{Suz}& 5^2&  \IZ &							\spor{Fi}_{24}'& 5^2& \IZ   \cr
2.\nspor J_{2} &  3_+^{1+2} &  \textcolor{black}{\IZ}    &                    2.\nspor{Suz}& 5^2&  \IZ & 						         3.\nspor{Fi}_{24}'& 5^2& \IZ \cr
 \spor J_{2} & 5^2 &  \IZ   &                    				   3.\nspor{Suz}& 5^2&  \IZ &							\spor{Fi}_{24}'& 7_+^{1+2} & \textcolor{black}{\IZ^{3}} \cr
2.\nspor J_{2} & 5^2 &  \IZ   &           					   6.\nspor{Suz}& 5^2&  \IZ &							3.\nspor{Fi}_{24}'& 7_+^{1+2} & \textcolor{black}{\IZ^{3}}\cr
  \spor M_{23}& 3^2&    \IZ  &                  				   \spor{O'N}& 7_+^{1+2}& \textcolor{black}{\IZ^{3}}   & 			\spor B & 7^2 & \IZ  \cr
  \spor{HS}& 3^2&  \IZ   &                 					   3.\nspor{O'N}& 7_+^{1+2}&\textcolor{black}{\IZ^{3}}  &		2.\nspor B & 7^2 & \IZ  \cr
2.\nspor{HS}& 3^2&  \IZ   &                     				   	  \spor{Co}_{3} & 5_+^{1+2} &   \textcolor{black}{\IZ}   &		\spor M & 11^2 & \IZ \cr
 \spor{HS}&5_+^{1+2} &   \textcolor{black}{\IZ^{2}}   &
 \spor{Co}_{2} & 5_+^{1+2} &  \textcolor{black}{\IZ}    &
 \spor M & 13_{+}^{1+2} &  \textcolor{black}{\IZ^{2}} \cr 
 2.\nspor{HS}&5_+^{1+2} &    \textcolor{black}{\IZ^{2}}  &
 \spor{Fi}_{22}& 5^2& \IZ		&
 \tw2\spor F_4(2)'&3_+^{1+2}&   \textcolor{black}{\IZ^{2}}  \cr 
 \spor M_{24} & 3_+^{1+2} &\textcolor{black}{\IZ^{2}}  &           	   2.\nspor{Fi}_{22}& 5^2& \IZ	&		                           	\tw2\spor F_4(2)'& 5^2  &  \IZ \cr
\hline
\end{array}\]
\end{table}

 \begin{proof}
By Lemma~\ref{lem:ranktf}, the rank of $TF(G)$ is equal to $n_{G}$. 
Let $P$ be a Sylow $p$-subgroup of $G$ with $p$-rank~$2$. 

If $P$ is elementary abelian, then $n_{G}=1$ and
$TF(G)=\langle\Omega\rangle\cong \IZ$ in the $28$ cases listed above. \par
Now assume $P\cong p^{1+2}_+$. So $P$ has $p+1$ elementary abelian
$p$-subgroups of rank $2$ and $n_{G}$ is the number of $G$-conjugacy
classes of such subgroups. We run through the possibilities for $p$
and $G$ according to the classification. The results mentioned below are
taken or deduced from \cite[Table 1.1, Table 1.2, Remark 1.4]{RV}. For
convenience, let us write $\CE$ for the set of the elementary abelian
$p$-subgroups of rank $2$ of $P$.

\textbf{The case $p=3$.}
In this case $|\CE|=4$. 

If $G\in\{\spor{Ru}, \spor J_{4}\}$, then all four subgroups in $\CE$
are $G$-conjugate, giving $n_{G}=1$.  

If $G=\tw2\spor F_4(2)'$, then the subgroups in $\CE$ split into two
conjugacy classes, giving $n_{G}=2$. 

If $G\in\{\spor M_{12},2.\nspor M_{12}\}$, then fusion in $G$ is the
same as in $\PSL_{3}(3)$ so that $\CE$ splits into three conjugacy
classes, giving $n_G=3$. 
More precisely, $\{\langle x,z \rangle\}$ and $\{\langle y,z \rangle\}$
are two $G$-conjugacy classes and both subgroups are $3$-centric $3$-radical.
Moreover $N_{G}(P)/PC_{G}(P)\cong2^2$ can be realised as the subgroup of 
diagonal matrices of $\GL_{2}(3)$, so that  $\langle xy,z \rangle$ and 
$\langle xy^{2},z\rangle$ become conjugate via the matrix 
$\bigl(\begin{smallmatrix}1 & 0 \\0 & 2\end{smallmatrix}\bigr)$. 

If $G\in \{3.\nspor M_{22},6.\nspor M_{22},12.\nspor
M_{22}\}$, then $3$ divides the order of the centre of $G$, and because
all the subgroups of order $3$ of a Sylow $3$-subgroup of $\spor
M_{22}$ are conjugate in $\spor M_{22}$ (cf \cite{ATLAS}), the four
subgroups of $\CE$ in each of the above covers of $\spor M_{22}$ are
$G$-conjugate. Therefore $n_G=1$. 

If $G\in\{\spor M_{24}, \spor{He}\}$, then $\CE$ splits into two
conjugacy classes of size two (cf  \cite[Proof of Lemma 4.8]{RV}), one of
which is formed by the two $3$-centric $3$-radical subgroups of $P$ in
$G$. So $n_{G}=2$. 

If $G=\spor J_{2}$, then all four subgroups in $\CE$
are $G$-conjugate, giving $n_{G}=1$. Indeed, there is no $3$-centric $3$-radical subgroup and  $N_{G}(P)/PC_{G}(P)\cong8$
can be  realised as the subgroup of $\GL_{2}(3)$ generated by the matrix
$\bigl(\begin{smallmatrix}1 & 1 \\2 & 1\end{smallmatrix}\bigr)$, which
sends $x$ to $xy^{2}$, $y$ to $xy$ and  $z$ to $z^{2}$. In consequence all four elements of $\CE$ are conjugate.

\textbf{The case $p=5$.} In this case $|\CE|=6$.

If $G=\spor{Th}$, then all six subgroups in $\CE$ are $G$-conjugate,
giving $n_{G}=1$.   

If $G\in\{\spor{Ru}, 2.\nspor{Ru}\}$, then $\CE$ splits into two
conjugacy classes of size two and four (cf \cite[Proof of
Lemma 4.8]{RV}). The conjugacy class of size $2$ is formed by the
$5$-centric $5$-radical subgroups of $P$ in $G$, and the other four are
conjugate in $N_G(P)$. Note indeed that $N_{G}(P)/PC_{G}(P)\cong
4^{2}:2$. 

In all the remaining cases \cite{RV} show that there are no $5$-centric
$5$-radical subgroups of $P$ in $G$, saying that the $p$-fusion is
controlled by $N_G(P)$. In other words the $G$- and $N_G(P)$-conjugacy
classes of the subgroups in $\CE$ coincide, and their number gives us $n_G$.
We split our analysis into four, according to the structure of
$N_{G}(P)/PC_{G}(P)$ as a subgroup of $\GL_{2}(5)$. 

If $G=\spor{Co}_{3}$, then $N_{G}(P)/PC_{G}(P)\cong D_{48}$ and all six
subgroups in $\CE$ are $G$-conjugate. Therefore $n_{G}=1$. 

If  $G=\spor{Co}_{2}$, then $N_{G}(P)/PC_{G}(P)\cong 4.\fS_{4}$ and all
six subgroups in $\CE$ are $G$-conjugate. Therefore $n_{G}=1$. 

If $G\in\{\spor{HS},2.\nspor{HS}\}$, then $N_{G}(P)/PC_{G}(P)\cong
D_{16}$ and we calculate that $\CE$ splits into two $G$-conjugacy
classes. Therefore $n_G=2$.  

If $G\in\{\spor{McL},3.\nspor{McL}\}$, then $N_{G}(P)/PC_{G}(P)\cong
3:8$ and all six subgroups in $\CE$ are $G$-conjugate. Therefore
$n_{G}=1$. 

\textbf{The case $p=7$.} In this case $|\CE|=8$. 
We split the analysis into three according to the structure of
$N_{G}(P)/PC_{G}(P)$, which is explicitly described as a subgroup of
$\GL_{2}(7)$ in \cite[Proof of Lemma 4.16]{RV} allowing us to compute
$n_{G}$ by hand. There are five groups to consider.

If  $G=\spor{He}$, then $N_{G}(P)/PC_{G}(P)\cong\fS_{3}\times 3$ and
$\CE$ splits into three conjugacy classes: two with three subgroups and
one of size two, giving $n_{G}=3$. In this case there are three
$G$-conjugate $7$-centric $7$-radical subgroups.  

If $G\in\{\spor{Fi}_{24}',3.\nspor{Fi}_{24}'\}$, then
$N_{G}(P)/PC_{G}(P)\cong\fS_{3}\times 6$ and $\CE$ splits into
three conjugacy classes: two with three subgroups and one of size two,
giving $n_{G}=3$. In this case there are six $7$-centric $7$-radical
subgroups split into two $G$-conjugacy classes of size $3$. 

If $G\in\{\spor{O'N},3.\nspor{O'N}\}$, then $N_{G}(P)/PC_{G}(P)\cong
D_{8}\times 3$ and $\CE$ splits into three conjugacy classes: two with
two subgroups and one of size four, giving $n_G=3$. In this case there
are four $7$-centric $7$-radical subgroups split into two $G$-conjugacy
classes of size $2$.

\textbf{The case $p=11$.} In this case $G=\spor J_{4}$ and
Theorem~\ref{thm:ti} says that $P$ is T.I., which implies
$N_G(P)=N_G(Z(P))$. 
In this case there are no $11$-centric $11$-radical
subgroups, and so the $11$-fusion is controlled by $N_G(P)$. 
From \cite{BENSONthesis, ATLAS}, we get that
$N_G(P)=11_{+}^{1+2}:(5\times (2.\fS_{4}))$ and $N_{G}(P)/PC_{G}(P)\cong
5\times (2.\fS_{4})$. 
Hence the twelve
elementary abelian $11$-subgroups of $P$ of rank two are $G$-conjugate,
and so $n_G=1$.

\textbf{The case $p=13$.} In this case $G=\spor M$  and
the fourteen elementary abelian $13$-subgroups of $P$ of rank two are
split into two $G$-conjugacy classes, namely one of size six, consisting
of $13$-centric $13$-radical subgroups, and another of size eight,
giving $n_{G}=2$. Here $N_{G}(P)/PC_{G}(P)\cong 3\times (4.\fS_{4})$
(cf \cite[Proof of Lemma 4.18]{RV}). 
\end{proof}

\subsection{Groups with $p$-rank greater or equal to
  $3$}\label{sec:prankgt2} 

\begin{prop}\label{lem:rktf=1}
Let $G$ be a quasi-simple group such that $G/Z(G)$ is sporadic simple
and let  $p=char(k)=3$. If   $G$ is one of the groups $3.\nspor{J}_{3}$,
$\spor{McL}$, $3.\nspor{McL}$, $\spor{Suz}$, $2.\nspor{Suz}$,
$3.\nspor{Suz}$, $\spor{O'N}$,  $\spor{Co}_{3}$,  $\spor{Co}_{2}$,
$\spor{Fi}_{22}$, $2.\nspor{Fi}_{22}$, $3.\nspor{Fi}_{22}$, $\spor{HN}$,
$\spor{Ly}$, $\spor{Th}$, $\spor{Fi}_{23}$, $\spor{Co}_{1}$,
$2.\nspor{Co}_{1}$, $\spor{Fi}'_{24}$, $3.\nspor{Fi}'_{24}$, $\spor{B}$,
$2.\nspor{B}$, $\spor{M}$,  then $TF(G)=\langle\Omega\rangle\cong \IZ$. 
\end{prop}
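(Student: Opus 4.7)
The aim is to verify that $n_G = 1$ for each of the $23$ groups listed, since then Lemma~\ref{lem:omnibus}(1) yields $TF(G) = \langle\om\rangle \cong \IZ$. The principal lever is Corollary~\ref{cor:torsionfreerank1}: with $p = 3$, a $3$-rank strictly greater than~$3$ automatically forces $n_G = 1$. My plan is therefore to check, in each case, that the $3$-rank of $G$ exceeds~$3$ (except possibly for one delicate case treated by hand).

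First, for the four two-fold covers $G \in \{2.\nspor{Suz},\,2.\nspor{Fi}_{22},\,2.\nspor{Co}_{1},\,2.\nspor{B}\}$ the centre has order coprime to $3$, so Lemma~\ref{lem:cext}(3) immediately reduces the computation of the torsion-free rank to that of the simple quotient, which itself appears in the list. Second, for each of the simple groups $\spor{McL}$, $\spor{Suz}$, $\spor{O'N}$, $\spor{Co}_{3}$, $\spor{Co}_{2}$, $\spor{Fi}_{22}$, $\spor{HN}$, $\spor{Ly}$, $\spor{Th}$, $\spor{Fi}_{23}$, $\spor{Co}_{1}$, $\spor{Fi}'_{24}$, $\spor{B}$, $\spor{M}$ I would read off the $3$-rank from \cite[Table~5.6.1]{GLS3}, supplemented by the $3$-local maximal subgroups tabulated in the ATLAS~\cite{ATLAS}; in every case one finds $m_3(G) \geq 4$, so Corollary~\ref{cor:torsionfreerank1} applies. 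Third, for the three-fold covers $3.\nspor{McL}$, $3.\nspor{Suz}$, $3.\nspor{Fi}_{22}$, $3.\nspor{Fi}'_{24}$, the Sylow $3$-subgroup $P$ of the cover is a central extension of a Sylow $\bar{P}$ of the simple quotient by the central $C_3$. Since each of $\spor{McL}$, $\spor{Suz}$, $\spor{Fi}_{22}$, $\spor{Fi}'_{24}$ already has $m_3 \geq 4$, the same inspection of \cite[Table~5.6.1]{GLS3} (or a direct argument: any elementary abelian subgroup $\bar{E} \leq \bar{P}$ of rank $r$ lifts to an abelian subgroup $\tilde{E}$ of $P$ of order $3^{r+1}$, and its $3$-socle $\Omega_1(\tilde{E})$ is elementary abelian of rank at least~$r$, applied to a well-chosen $\bar E$) yields $m_3(P) \geq 4$, and Corollary~\ref{cor:torsionfreerank1} again concludes.

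The only delicate case is $G = 3.\nspor{J}_3$: the Sylow $3$-subgroup of $\spor{J}_3$ is of the form $3^{2+1+2}$ of order $3^5$ but of $3$-rank only~$2$, so the argument above does not automatically push the rank of a Sylow of $3.\nspor{J}_3$ above~$3$. Here the Sylow $3$-subgroup $P$ of $3.\nspor{J}_3$ (of order $3^6$) is a central extension of $3^{2+1+2}$ by $C_3 = Z(3.\nspor{J}_3)$, and I would analyse this extension directly, using GAP or MAGMA or from an explicit presentation available in the literature, to determine $m_3(P)$ together with the $G$-conjugacy classes of its maximal elementary abelian $3$-subgroups of rank~$2$. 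If $m_3(P) \geq 4$, Corollary~\ref{cor:torsionfreerank1} applies as above; otherwise $m_3(P) = 3$ and one has to verify via Lemma~\ref{lem:ranktf} that every rank-$2$ elementary abelian subgroup of $P$ is contained in a rank-$3$ one, so that there are no maximal rank-$2$ elementary abelian subgroups and $n_G = 0 + 1 = 1$. This is the main obstacle of the proof: unlike all the other cases, it cannot be handled by a uniform appeal to tables and requires a hands-on inspection of the extension class of $P$ over $3^{2+1+2}$.
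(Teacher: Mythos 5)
Your strategy coincides with the paper's: its entire proof consists of reading off from \cite[Table 5.6.1]{GLS3} that every group in the list (covering groups included) has $3$-rank at least $4$, whence Theorem~\ref{thm:connected} excludes maximal elementary abelian subgroups of order $9$ and Lemma~\ref{lem:ranktf} gives $n_G=1$; your use of Lemma~\ref{lem:cext}(3) for the $2$-fold covers is a harmless extra reduction. However, two of your subsidiary arguments do not hold up. The ``direct argument'' for the $3$-fold covers is false: the preimage $\tilde E$ in $3.G$ of an elementary abelian $\bar E$ of rank $r$ is a central extension of $\bar E$ by $C_3$ and need not be abelian, and even its exponent-$3$ subgroup $\Omega_1(\tilde E)$ need not be abelian --- the Sylow $3$-subgroup $3^{1+2}_+$ of $3.\nspor{M}_{22}$, which is the preimage of the elementary abelian Sylow $3^2$ of $\spor{M}_{22}$ and has $3$-rank $2$, is a counterexample occurring in this very paper. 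Choosing $\bar E$ ``well'' only guarantees an isotropic subspace of roughly half the rank for the commutator pairing, which is not enough; for the covers you really do need to quote the table, as the paper does.

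The more serious issue is that $3.\nspor{J}_{3}$ is left unresolved: you describe two possible computations and what you would conclude from each outcome, but you carry out neither, so your proof establishes the statement for only $22$ of the $23$ groups. The premise of your worry is also off: $\spor{J}_{3}$ has $3$-rank $3$, not $2$ (its Sylow $3$-subgroup of order $3^5$ has non-cyclic centre, cf.\ Table~\ref{tab:spor2} and the proposition following it), and the table consulted by the paper already records $m_3(3.\nspor{J}_{3})\geq 4$, so this case is in fact no different from the others and needs no special treatment.
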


\begin{proof}
In all cases the $3$-rank of $G$ is greater or equal to $4$, see
\cite[Table 5.6.1]{GLS3}. Therefore $n_{G}=1$ by
Theorem~\ref{thm:connected} and the claim follows from
Lemma~\ref{lem:ranktf}. 
\end{proof}

The remaining cases are the quasi-simple groups $G$ such that $G/Z(G)$
is sporadic simple and $G$ has $p$-rank between $3$ and $p$.

For convenience, we introduce a complementary notation taken from
\cite{ATLAS}. Namely, we write $nX^e$, where $n,e$ are positive integers and $X$ a
capital letter, to denote an elementary abelian group $n^e$ spanned by
elements belonging to the $G$-conjugacy class labelled $nX$ in \cite{ATLAS}.

\begin{prop}
Let $p\geq 3$ and $G$ be a quasi-simple group such that $G/Z(G)$ is 
sporadic simple.  If the $p$-rank of $G$ is greater or equal to $3$ and smaller 
or equal to $p$, then $G$ and $TF(G)$ are as listed in Table~\ref{tab:spor2}.
\end{prop}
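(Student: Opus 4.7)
The plan is to apply Lemma~\ref{lem:ranktf}: since the $p$-rank of $G$ is at least $3$, the torsion-free rank equals $n_G = c_G + 1$, where $c_G$ denotes the number of $G$-conjugacy classes of maximal elementary abelian $p$-subgroups of rank $2$. Consequently $TF(G)\cong \mathbb{Z}^{c_G+1}$, and the task reduces to computing $c_G$ for each pair $(G,p)$ appearing in Table~\ref{tab:spor2}.

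As a first reduction, whenever $p\nmid |Z(G)|$ we may replace $G$ by the sporadic simple quotient $G/Z(G)$ by Lemma~\ref{lem:cext}(3); central $p'$-extensions do not affect the count $n_G$. Only a handful of cases involve a central $p$-subgroup, and for these any central subgroup of order $p$ is contained in every maximal elementary abelian $p$-subgroup, so the enumeration and fusion analysis remain controlled by the simple quotient. Thus the core problem becomes the enumeration of $G$-conjugacy classes of maximal elementary abelian $p$-subgroups of rank $2$ in the sporadic simple groups whose $p$-rank lies strictly between $2$ and $p+1$ (the upper bound being Theorem~\ref{thm:connected}).

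Next, for each such $(G,p)$ I would proceed as follows. First, fix a Sylow $p$-subgroup $P$ of $G$, whose isomorphism type is recorded in \cite[Table~5.6.1]{GLS3} and in \cite{AnWil}; enumerate its elementary abelian $p$-subgroups $E$ of order $p^2$. For each such $E$, test maximality in $G$: if no element of order $p$ outside $E$ centralises $E$, then $E$ is maximal. This can be checked from the centraliser of a representative of each $G$-class of involutions of order $p$ inside $P$, data available in the ATLAS \cite{ATLAS} and the GAP character table library \cite{CTblLib}. Once the list of maximal rank-$2$ elementary abelians is assembled, the $G$-fusion is computed from the action of $N_G(P)/PC_G(P)\leq \GL_2(p)$, as was done in the proof of Proposition~\ref{prop:prank2}, combined with fusion contributed by larger $p$-local subgroups when $P$ is not $p$-centric $p$-radical. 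Collecting these counts yields $c_G$ and hence the entries of Table~\ref{tab:spor2}.

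The main obstacle is the case-by-case character of this enumeration. For the larger sporadic groups, and in particular for entries involving $\spor B$, $\spor M$, or high primes, both the maximality test and the fusion analysis may require detailed scrutiny of several maximal $p$-local subgroups, typically using the structural descriptions in \cite{ATLAS} and \cite{GLS3}, and in the hardest cases explicit verification via \GAP\ or \MAGMA. The upside is that the constraint $3\leq \mathrm{rk}_p(G)\leq p$ together with Theorem~\ref{thm:connected} severely restricts the possibilities, so that only finitely many configurations arise and the computation terminates.
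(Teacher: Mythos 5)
Your reduction is the same as the paper's: by Lemma~\ref{lem:ranktf} everything comes down to counting $G$-classes of maximal rank-$2$ elementary abelian $p$-subgroups, and central $p'$-extensions are discarded via Lemma~\ref{lem:cext}(3). From that point on, however, you only describe an algorithm; you do not actually establish a single entry of Table~\ref{tab:spor2}, and for a statement whose content is a finite table of facts, the case-by-case verification \emph{is} the proof. The paper's argument consists precisely of those verifications: for $\spor{J}_3$ at $p=3$ the Sylow subgroup has non-cyclic centre, so no rank-$2$ elementary abelian subgroup is maximal; for $\spor{Ly}$ and $\spor{B}$ at $p=5$ one uses the local subgroup $5^3.\PSL_3(5)$ and a Jordan-form argument to show every element of order $5$ centralises a non-trivial subgroup of the natural module and hence lies in no maximal rank-$2$ subgroup; for $\spor{HN}$ and $\spor{M}$ one inspects the normalisers of elements of order $p$; and for $3.\nspor{O'N}$, $\spor{Co}_1$ and $2.\nspor{Co}_1$ the count is done by \MAGMA. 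In almost every case the outcome is that there are \emph{no} maximal rank-$2$ subgroups, so $n_G=1$; only $\spor{Co}_1$ at $p=5$ contributes one class, giving $\IZ^2$. None of this content can be recovered from your text.

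Two specific points in your plan would also mislead you if executed literally. First, the fusion step ``computed from the action of $N_G(P)/PC_G(P)\leq\GL_2(p)$ as in Proposition~\ref{prop:prank2}'' rests on the identification $\Out(p^{1+2}_+)\cong\GL_2(p)$, which is specific to extraspecial Sylow subgroups of order $p^3$; every Sylow subgroup occurring in Table~\ref{tab:spor2} has order at least $p^4$, so this description of the fusion is unavailable (and in general fusion of subgroups is not controlled by $N_G(P)$ alone). The paper sidesteps fusion entirely by showing the set of maximal rank-$2$ subgroups is empty, or by direct machine computation. Second, your claim that when $p$ divides $|Z(G)|$ ``the enumeration and fusion analysis remain controlled by the simple quotient'' fails in the one case where it matters, $3.\nspor{O'N}$ at $p=3$: the $3$-rank drops from $4$ for $\spor{O'N}$ (cf.\ Proposition~\ref{lem:rktf=1}) to $3$ for the cover, because preimages of elementary abelian subgroups need not be elementary abelian, so the poset of elementary abelian subgroups genuinely changes; this case has to be computed directly and cannot be read off from $\spor{O'N}$.
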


\begin{table}[htbp]
\caption{Sporadic quasi-simple groups with $p$-rank between $3$ and
  $p$.}\label{tab:spor2}
\[\begin{array}{|c|c|c|c||c|c|c|c||c|c|c|c|}
\hline
     G& p& p\text{-rank} & TF(G)&  G& p& p\text{-rank}& TF(G)& G & p & p\text{-rank}& TF(G) \cr
\hline
   \spor{J}_{3} & 3& 3& \IZ &			\spor{Ly} &5 & 3& \IZ   &  		         \spor{M} & 5& 4& \IZ \cr
   3.\nspor{O'N} & 3& 3& \IZ  &            		\spor{HN} &5 &3&  \IZ   &     	 \spor{M}        & 7& 3&  \IZ \cr
    \spor{Co}_{1} & 5& 3&  \IZ^{2}&       		\spor{B} &5 & 3&  \IZ &     	                  & & &  \cr  
   2.\nspor{Co}_{1} & 5& 3&  \IZ^{2} & 		2.\nspor{B} &5 & 3& \IZ &     	         & & & \cr  
\hline
\end{array}\]
\end{table}

\begin{proof}
A Sylow $3$-subgroup of $\spor J_3$ has a non-cyclic center and
therefore $\spor J_3$ cannot have a maximal elementary abelian
$3$-subgroup of rank $2$. 
For $3.\nspor{O'N}$, $\spor{Co}_{1}$ and
$2.\nspor{Co}_{1}$, we used MAGMA \cite{MAGMA} to calculate the number 
of conjugacy
classes of the elementary abelian $p$-subgroups of rank $2$, for the
appropriate prime $p$, and then invoked Lemma~\ref{lem:ranktf}.
For the remaining cases, we gather information from \cite{ATLAS,GLS3}.
Let $P$ be a Sylow $p$-subgroup of $G$. If $G=\spor{Ly}$ and $p=5$ then 
$P$ is contained in the normaliser 
$N_G(5A^3)=5^3.\PSL_3(5)$ of an elementary abelian subgroup $5A^3$. So
$\PSL_3(5)$ acts as automorphism group of a $3$-dimensional vector space
$\F_5^3$ and by linear algebra, using Jordan forms, any $5$-element of
$\PSL_3(5)$ must centralise a non-trivial subgroup in $5^3$ (i.e. must have
a non-trivial eigenspace on $\F_5^3$). 
Therefore each element of order $5$ in $P$ has a centraliser in
$P$ of rank at least $3$, saying that $P$ has no maximal elementary
abelian subgroup of rank $2$. A similar argument applies to $\spor B$
and $2.\nspor B$, as $\spor B$ has a local subgroup of the form
$N_G(5B^3)=5^3.\PSL_3(5)$. 
If $G=\spor{HN}$ and $p=5$ we look at the normalisers of
the elements of order $5$ as given in \cite{GLS3}. In the Sylow
$5$-subgroup of $N_G(5A)=(D_{10}\times \PSU_3(5)):2$ each element of
order $5$ is centralised by a subgroup of $5$-rank at least $3$. From
$N_G(5B)$ we see that $C_P(5B)$ has rank at least $3$ and similarly for
$\overline{5C}$ and $5E$. Consequently $G$ cannot have a maximal
elementary abelian 5-subgroup of rank $2$.
Finally, if  $G=\spor M$ and $p=5$ then
$P$ is a Sylow $5$-subgroup of a local subgroup of the form
$5^{1+6}_+.(2.\spor{J}_2).4$ which has no maximal elementary abelian
subgroups of rank $2$ and so neither does $\spor M$. If $p=7$ then there
are two conjugacy classes of elements of order $7$. The normalisers have
the form $((7.3)\times\spor{He}).2$ and $7^{1+4}_+.(2.\fA_7\times3).2$ and
none of these has a maximal elementary abelian $7$-subgroup of rank $2$.
\end{proof}


\section{The torsion subgroup $TT(G)$ of $T(G)$ in odd
  characteristic.}\label{sec:TT}  

Let $p$ be an odd prime. We consider in this section groups $G$ with a noncyclic Sylow
$p$-subgroup $P$. (If $P$ is cyclic, then we refer to Section~\ref{sec:tables}.) 
By Lemma~\ref{lem:omnibus}, the torsion
subgroup $TT(G)$ identifies with a subgroup of $X(N_{G}(P))$ via
restriction and consists precisely of the $kG$-Green correspondents of
the $1$-dimensional $kN_{G}(P)$-modules that are endotrivial, i.e. 
the trivial source endotrivial modules. Recall that for a group $G$
we write $X(G)$ for the group of isomorphism classes of $1$-dimensional
$kG$-modules, and it is isomorphic to the $p'$-group $G/G'P$ where
$G'=[G,G]$ is the commutator subgroup of $G$.

Simple endotrivial modules for covering groups of sporadic groups have
been determined in \cite[\S 7]{LMS}, where the authors prove that many
of them are trivial source modules, hence elements of $T(G)$. 
We now continue our analysis of $TT(G)$ along similar lines, using Green
correspondence and induction and restriction of characters.

In the rest of this section we use the following notation. If $G$ is a
given sporadic group and $\ell.G$ an $\ell$-fold cover of $G$, we write
$P$ and $\wt P$ for a Sylow $p$-subgroup of $G$ and of $\ell.G$
respectively, and set  $N=N_{G}(P)$ and $\wt N=N_{\ell.G}(\wt P)$.
Also, for a subgroup $H$ of $G$ containing $N$ and for
an indecomposable $kN$-module $V$, then $\Gamma_{H}(V)$ denotes the
$kH$-Green correspondent of $V$. 
Similarly for $\ell.G$.

We use the structure of the Sylow normalisers and maximal subgroups as
given in \cite{ATLAS,GL,GLS3} and the GAP Character Table Libraries
\cite{CTblLib}. These are especially useful in determining $X(N)$.
In our computations we denote by $\chi$ ordinary irreducible characters of a group. 
We index the ordinary irreducible characters and the blocks according 
to the GAP Character Table Libraries \cite{CTblLib}, with the exception that 
we denote the principal block by $B_{0}$ instead of $B_{1}$. Accordingly we denote by
 $e_{i}$ the central primitive idempotent corresponding to the block $B_{i}$.
We denote by $1_{H}\in\Irr(H)$ the trivial character of a group $H$ and 
$1_{i}\in X(H)$ denotes a non-trivial $1$-dimensional $kH$-module. 
We also identify these with the corresponding ordinary and Brauer
characters, as well as the corresponding simple $kG$-module and its
class in $X(H)$.

Throughout we use the dimensional criterion for trivial source
endotrivial modules of Lemma~\ref{basics}(7),
the possible character values for characters of endotrivial modules 
given by Theorem~\ref{thm:lift} and Theorem~\ref{lem:Greenchars}, and
the possible values of characters of trivial source modules given by
Lemma~\ref{lem:LandScott}.  We split the cases according to the values
of $p$ and the arguments we used to obtain $TT(G)$. \\

First, as for the $2$-fold covers in characteristic $2$, the $3$-fold
covers in characteristic $3$ cannot have non-trivial torsion endotrivial
modules.  
\begin{lemma}\label{lem:3foldcov}
Let $p=3$ and let $G$ be one of the covering groups $3.\nspor{M}_{22}$,
$6.\nspor{M}_{22}$, $12.\nspor{M}_{22}$, $3.\nspor{J}_{3}$,
$3.\nspor{McL}$, $3.\nspor{Suz}$, $6.\nspor{Suz}$, $3.\nspor{O'N}$,
$3.\nspor{Fi}_{22}$, $6.\spor{Fi}_{22}$, $3.\nspor{Fi}'_{24}$. Then
$TT(G)=\{[k]\}$. 
\end{lemma}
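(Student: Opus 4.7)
The plan is to reduce the statement to a single application of Lemma~\ref{lem:cext}(2). Each group $G$ listed is, by construction, a perfect central extension of a sporadic simple group by a cyclic centre of order $3$, $6$, or $12$, so $p=3$ divides $|Z(G)|$ in every case. Taking $A=Z(G)$, the first hypothesis $p\mid|A|$ of Lemma~\ref{lem:cext}(2) is automatic, and it remains only to verify the second hypothesis $TT(P)=\{[k]\}$ for a Sylow $3$-subgroup $P$ of $G$.

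To establish this, I would recall that for odd $p$, the group $T(P)$ is torsion-free precisely when $P$ is noncyclic. I would then run through the eleven covers and identify the Sylow $3$-subgroup in each case. For $3.\nspor{M}_{22}$, $6.\nspor{M}_{22}$, $12.\nspor{M}_{22}$, $3.\nspor{McL}$, $3.\nspor{Fi}_{22}$, and $6.\nspor{Fi}_{22}$, Table~\ref{tbl:spor2} records $P\cong 3_+^{1+2}$. For $3.\nspor{J}_{3}$, $3.\nspor{Suz}$, $6.\nspor{Suz}$, $3.\nspor{O'N}$, and $3.\nspor{Fi}'_{24}$, the $3$-rank is at least $3$ by Proposition~\ref{lem:rktf=1} (or equivalently \cite[Table~5.6.1]{GLS3}). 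In all eleven cases $P$ is noncyclic, hence $TT(P)=\{[k]\}$.

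With both hypotheses in place, Lemma~\ref{lem:cext}(2) yields $TT(G)=\{[k]\}$ at once. In contrast to the characteristic-$2$ analogue Lemma~\ref{lem:coverschar2}, there is no real obstacle here and no ordinary-character computation on $p$-singular classes is needed. The reason is structural: since $p=3$ divides $|Z(G)|$, the group $G$ has a non-trivial normal $p$-subgroup, so by Lemma~\ref{lem:omnibus}(3) the kernel of $\Res^G_P:T(G)\to T(P)$ is contained in $X(G)$, which vanishes because $G$ is perfect. Thus any hypothetical non-trivial element of $TT(G)$ would restrict non-trivially to $TT(P)$, contradicting $TT(P)=\{[k]\}$. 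This is precisely the converse situation of Lemma~\ref{lem:coverschar2}, where $p=2$ did \emph{not} divide the order of the centres of the $3$-fold covers and a delicate argument on a carefully chosen $2$-singular class was required instead.
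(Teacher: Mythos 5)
Your proposal is correct and is essentially the paper's own proof: the paper disposes of all eleven covers in one line by observing that each has a non-trivial normal $3$-subgroup and invoking Lemma~\ref{lem:cext}(2), exactly as you do (your closing paragraph merely unwinds the proof of that lemma via Lemma~\ref{lem:omnibus}(3) and perfectness of $G$). One bookkeeping slip, harmless to the argument: for $3.\nspor{McL}$, $3.\nspor{Fi}_{22}$ and $6.\nspor{Fi}_{22}$ at $p=3$ the Sylow $3$-subgroup is not $3_+^{1+2}$ (those entries of Table~\ref{tbl:spor2} refer to $p=5$); these groups have $3$-rank at least $4$ and belong with your second list, but all that is needed is that $P$ is noncyclic, which holds throughout.
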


\begin{proof}
In all cases $G$ has a non-trivial normal $3$-subgroup so that
$TT(G)=\{[k]\}$ by Lemma~\ref{lem:cext}(2).  
\end{proof}

Next we treat covering groups $G$ such that $p\,\nmid\,|Z(G)|$.

\begin{lemma}\label{lem:nofaithful}
Assume  $p=3$ and  $G$ is one of the covering groups $2.\nspor{M}_{12}$,
$4.\nspor{M}_{22}$, $2.\nspor{J}_{2}$, $2.\spor{HS}$, $2.\nspor{Suz}$,
$2.\nspor{Fi}_{22}$, $2.\nspor{Co}_{1}$, $2.\nspor{B}$, or $p=5$ and
$G$ is one of the covering groups $2.\nspor{J}_{2}$, $2.\nspor{HS}$,
$2.\nspor{Ru}$, $2.\nspor{Suz}$, $3.\nspor{Suz}$, $6.\nspor{Suz}$,
$2.\nspor{Co}_{1}$, $3.\nspor{Fi}'_{24}$, $2.\nspor{B}$, or $p=7$ and
$G$ is one of the covering groups $2.\nspor{Co}_{1}$,
$3.\nspor{Fi}'_{24}$, $2.\nspor{B}$.  
Then $G$ has no faithful endotrivial module. 
Furthermore in all cases $T(G)\cong T(G/Z(G))$ via inflation, except if
$G=4.\nspor{M}_{22}$ in which case $T(G)\cong T(2.\nspor{M_{22}})$. 
\end{lemma}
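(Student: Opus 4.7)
The plan is to adapt, in odd characteristic, the character-theoretic argument used in the proof of Lemma~\ref{lem:coverschar2}. The key observation is that in every case listed we have $\gcd(p,|Z(G)|)=1$, which opens the door both to a non-existence argument for faithful endotrivial modules and to an identification of $T(G)$ with $T(G/Z(G))$ through inflation.

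First I would establish the non-existence of a faithful endotrivial $kG$-module. Assume for contradiction that $M$ is one, and let $\chi$ be the ordinary character afforded by a lift of $M$, which exists by Theorem~\ref{thm:lift}. Since $M$ is indecomposable, $\chi$ is a non-negative integer combination of ordinary irreducible characters lying in a common block of $\mathcal{O}G$, which is a \emph{faithful} block because $M$ is faithful (using that $G$ is quasi-simple, so every normal subgroup is central). Hence every constituent of $\chi$ is a faithful irreducible character of $G$. The goal is then to exhibit, for each pair $(G,p)$ in the statement, a $p$-singular conjugacy class $C$ together with an integer $m\in\IZ\setminus\{\pm 1\}$ such that $\psi(c)\in m\IZ$ for every faithful $\psi\in\Irr(G)$ and every $c\in C$. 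This immediately yields $\chi(c)\in m\IZ$, contradicting $|\chi(c)|=1$ asserted by Theorem~\ref{thm:lift}(2). The pair $(C,m)$ is located by direct inspection of the GAP Character Table Libraries~\cite{CTblLib}; this case-by-case verification, following the template of the proof of Lemma~\ref{lem:coverschar2}, is the main obstacle, since one must check that a suitable class exists in each of the twenty groups in the statement.

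Second, I would deduce the isomorphism $T(G)\cong T(G/Z(G))$ via inflation (respectively $T(4.\nspor{M}_{22})\cong T(2.\nspor{M}_{22})$ in the exceptional case). Since $p\nmid|Z(G)|$, Lemma~\ref{lem:inf}(1) yields a well-defined inflation homomorphism $\Inf\colon T(G/N)\to T(G)$ for every central subgroup $N\leq Z(G)$. Its injectivity is a formal consequence of the fact that inflation is fully faithful and preserves projectivity, which is immediate from the decomposition $kG=e_{N}kG\oplus(1-e_{N})kG$ afforded by the central idempotent $e_{N}=\tfrac{1}{|N|}\sum_{n\in N}n$. For surjectivity, take any indecomposable endotrivial $kG$-module $M$. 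Schur's lemma provides a central character $\omega\colon Z(G)\to k^{\times}$ through which $Z(G)$ acts on $M$, so that $M$ is the inflation of a faithful endotrivial module of the quasi-simple quotient $G/\ker(\omega)$. In all cases but $G=4.\nspor{M}_{22}$, every such proper quotient appears itself in the list of the lemma, and Step~1 therefore forces $\ker(\omega)=Z(G)$, so that $M$ is inflated from $G/Z(G)$.

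The exceptional case $G=4.\nspor{M}_{22}$ with $p=3$ is handled similarly, with $2.\nspor{M}_{22}$ playing the role of $G/Z(G)$: the subgroups of $Z(G)=C_{4}$ are $1$, $C_{2}$ and $C_{4}$, but only $4.\nspor{M}_{22}$ itself belongs to the $p=3$ part of the list. Step~1 therefore excludes only central characters of order $4$, so that $C_{2}\leq\ker(\omega)$ and $M$ is inflated from $G/C_{2}\cong 2.\nspor{M}_{22}$, yielding $T(4.\nspor{M}_{22})\cong T(2.\nspor{M}_{22})$ as asserted.
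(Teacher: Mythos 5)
Your proposal follows essentially the same route as the paper: the paper likewise rules out faithful endotrivial modules by locating, for each pair $(G,p)$, a $p$-singular class $C$ and an integer $m\in\IZ\setminus\{\pm1\}$ with $\psi(c)\in m\IZ$ for all faithful $\psi\in\Irr(G)$, contradicting Theorem~\ref{thm:lift}, and then invokes Lemma~\ref{lem:inf} for the inflation isomorphisms (your central-character analysis of the quotients, including the $4.\nspor{M}_{22}$ exception, is a correct expansion of what the paper leaves implicit). The only caveat is that your template of an integer $m$ fails for $2.\nspor{HS}$ at $p=3$, where the paper must argue separately that all faithful characters take values in $3i\IZ$ on class $12a$ --- still incompatible with $|\chi(c)|=1$, but outside the form you stipulate.
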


\begin{proof}
Since $p\,\nmid\,|Z(G)|$ faithful endotrivial modules must afford
characters in faithful blocks. The proof is similar to that of
Lemma~\ref{lem:coverschar2}. Except for $2.\nspor{HS}$ in characteristic
$3$,  \cite{CTblLib} shows in all cases
that $G$ has a $p$-singular conjugacy class $C$ such that for all
faithful characters $\psi\in\Irr(G)$ we have $\psi(c)\in m\IZ$ with
$m\in\IZ\setminus\{\pm1\}$, $c\in C$. Hence we obtain a contradiction with the
possible values of characters of endotrivial modules given by
Theorem~\ref{thm:lift}.
More accurately, we can choose $m$ and $C$ as follows. For $p=3$ take
$m=0$ if $G$ is one of $2.\nspor{M}_{12}$,  $4.\nspor{M}_{22}$,
$2.\nspor{J}_{2}$, $2.\nspor{Suz}$, $2.\nspor{Fi}_{22}$,
$2.\nspor{Co}_{1}$, or $2.\nspor{B}$, and for $C$ choose $12a$, $6b$, $12a$,
$12a$, $12a$, $12a$, $6c$ respectively. For $p=3$ and $G=2.\nspor{M}_{12}$ take
$(m,C)=(2,6c)$. If $p=5$ take $m=0$ if $G$ is one of $2.\nspor{J}_{2}$,
$2.\nspor{HS}$, $2.\nspor{Suz}$, $3.\nspor{Suz}$, $6.\nspor{Suz}$,
$2.\nspor{Co}_{1}$, or $2.\nspor{B}$ and for $C$ choose $20a$, $20a$, $20a$,
$15e$, $15e$, $20a$, $20b$ respectively. For $p=5$, if $G=2.\nspor{Ru}$ take
$(m,C)=(5,5b)$, and if $G=3.\nspor{Fi}'_{24}$ take
$(m,C)=(5,15c)$. If $p=7$ take $(m,C)=(2,7a)$ for
$G=2.\nspor{Co}_{1}$, take $(m,C)=(3,7a)$ for
$G=3.\nspor{Fi}'_{24}$, and take $(m,C)=(0,14b)$ for
$G=2.\nspor{B}$. 
Finally if $G=2.\nspor{HS}$ and $p=3$, then all faithful characters
take value in $3i\IZ$ on class $12a$, hence the result. (Notation for
conjugacy classes is that of GAP.)  
The claim on $T(G)$ is straightforward from Lemma~\ref{lem:inf}.
\end{proof}

We now go through the list of sporadic simple groups by increasing order, 
including non-trivial covers not treated by Lemmas~\ref{lem:3foldcov} and~\ref{lem:nofaithful} above, 
and compute the torsion subgroup $TT(G)$ for as many groups $G$ and characteristics $p$ as possible.
In the cases when we do not obtain the full structure of $TT(G)$ we give
bounds instead.

Let us point out that the results for the groups $\spor{He}$ and $\spor{Suz}$ 
in characteristic~$5$,  hereafter mentioned as `MAGMA computation', partly use a 
method recently developed by Paul Balmer \cite{Balmer} and
a subsequent algorithm by Carlson (private communication). Balmer's method
determines the kernel of the restriction $TT(G)\to TT(H)$ to a subgroup
$H$ of $G$ by showing that this kernel is isomorphic to the set of the
so-called weak $H$-homomorphisms. In particular, for $H=P$ the set of weak
$P$-homomorphisms is isomorphic to the subgroup of $TT(G)$ spanned by
the trivial source endotrivial modules. We omit here the technicalities, to
avoid further deviation from our present scope, referring the reader to
\cite{Balmer}. The main drawback of this method is that it cannot always
be used in practice, and when it does, then it often
relies on tedious computer calculations. Both authors are thankful to
Jon Carlson with his help in implementing an algorithm in MAGMA and
running it on specific groups.
\label{page:balmer}

\subsection{The group $G=\spor{M_{11}}$ in characteristic
  $3$}\label{sec:m11tors} 

By Theorem~\ref{thm:ti}, the group $G$ has an abelian T.I. Sylow
$3$-subgroup $P\cong 3^2$ and $N\cong 3^2 : SD_{16}$.
Thus $X(N)\cong SD_{16}/(SD_{16})'\cong (\IZ/2)^2$.
It follows from Lemma~\ref{lem:omnibus}(3),(4) and
Proposition~\ref{prop:prank2} that $TT(G)\cong X(N)\cong (\IZ/2)^{2}$.

 The indecomposable modules representing the classes in $TT(G)$ are 
 the $kG$-Green correspondents of the modules in $X(N)$. Their Loewy and 
 socle series, as well as the characters they afford, are described
 in~\cite[(4.3)]{KWgreencorresp}. Their dimensions are $1$, $55$, $55$
 and  $10$.

\subsection{The group $G=\spor{M_{12}}$ and its double cover in
  characteristic $3$}  \label{sec:m12tors}  

In this case $P=3^{1+2}_+$ and we have a chain of subgroups 
$$P\leq N\leq N_2\leq \spor M_{12}
\qbox{where $N\cong 3_+^{1+2}:2^2$ and $N_2\cong 3^2:\GL_2(3)$.}$$  
So $X(N)\cong (\IZ/2)^2$ and $X(N_2)\cong \IZ/2$.
Since $|N_2:N|=4$, induction from $kN$ to $kN_2$ of a $1$-dimensional
$kN$-module is a module of dimension $4$. Hence the $kN_{2}$-Green
correspondents
$\Gamma_{N_{2}}(1_{N}),\Gamma_{N_2}(1_1),\Gamma_{N_2}(1_2),\Gamma_{N_2}(1_3)$
of the four $1$-dimen\-sional modules $1_{N},1_1,1_2,1_3\in X(N)$ have
dimension $1,1,4$ and $4$, respectively, because $N_2$ has exactly two
non-isomorphic $1$-dimensional modules. In particular we conclude that
$\Gamma_{N_2}(1_2)$, $\Gamma_{N_2}(1_{3})$ are not endotrivial, for $4\not\equiv
1\pmod{|P|}$. A fortiori $\Gamma_G(1_2)$, $\Gamma_G(1_3)$ are not endotrivial either. It
remains to prove that $\Gamma_G(1_1)\notin TT(G)$. Note that the principal block $B_{0}$ is the unique block of $kG$ with full defect. The Loewy and
socle series of $\Gamma_G(1_1)$ have been computed in \cite[6.1]{KWloewy}.
In particular, if $\psi_{1}$ denotes the linear character corresponding
to $\Gamma_{N_2}(1_1)$, then we see that
$e_{0}\cdot\Ind_{N_{2}}^{G}(\psi_{1})$ yields that $\Gamma_{G}(1_1)$ is
the trivial source module affording the character $\chi_9+\chi_{13}$ of degree
$175\not\equiv 1\pmod{|P|}$. Whence
$\Gamma_{G}(1_1)\notin TT(G)$ and $TT(G)=\{[k]\}$.

\subsection{The group $G=\spor{M_{22}}$ and its covers in characteristic
  $3$}\label{sec:m22tors} 

A Sylow $3$-subgroup $P$ of $G$ is elementary abelian of order $9$
and $N\cong 3^{2}:Q_{8}$. So 
$X(N)\cong (\IZ/2)^2$. In order to obtain $TT(G)$, we first  consider the 
group $2.G=2.\spor{M}_{22}$. A Sylow $3$-subgroup $\wt P$ of $2.G$ is elementary 
abelian of order $9$ and $X(\wt N)\cong \IZ/2\oplus \IZ/4$. We gather from \cite[Thm. 7.1]{LMS} and \cite[Sec. 2]{LM} that
$2.G$ has six simple trivial source endotrivial modules. Namely,  the trivial module $k$, a self-dual module $S_{55}$ of dimension 55 affording the character $\chi_{5}\in\Irr(2.G)$, and four faithful modules:  $S_{10}$, $S_{10}'$, both of dimension $10$, affording  the
characters $\chi_{13}, \chi_{14} \in \Irr(2.G)$, and $S_{154}$, $(S_{154})^*$ both of dimension $154$, affording the characters $\chi_{19}, \chi_{20} \in \Irr(2.G)$.
It follows directly that $TT(2.G)\cong X(\wt N)\cong\IZ/2\oplus\IZ/4$. Furthermore, by Lemma~\ref{lem:inf}(2), Green
correspondence preserves the faithfulness of blocks, hence $TT(G)\cong (\IZ/2)^{2}$. The group $T(4.\nspor{M}_{22})$ is isomorphic to $T(2.\nspor{M}_{22})$
via inflation by Lemma~\ref{lem:nofaithful}. However, notice that
$TT(4.\nspor{M}_{22})\ncong X(\wt N)\cong \IZ/2\oplus\IZ/8$ in this
case. Finally the groups $TT(3.\nspor{M}_{22})$, $TT(6.\nspor{M}_{22})$
and $TT(12.\nspor{M}_{22})$ are all trivial by Lemma~\ref{lem:cext}(2).


\subsection{The group $G=\spor{J_{2}}$}\label{sec:j2}\

\textbf{Characteristic $3$}.
In this case $P\cong 3_{+}^{1+2}$, $N\cong 3_{+}^{1+2}:8$, $X(N)\cong \IZ/8$ and $B_{0}(kG)$ is the unique $3$-block with full defect.
We pick $1_{4}$ to be a character of order $4$ of $X(N)$, then
$e_{0}\cdot\Ind_{N}^{G}(1_{4})=\chi_{12}+\chi_{13}+2\chi_{21}$. By the
criteria on degrees and character values $\Gamma_{G}(1_{4})$ cannot  be
endotrivial. It follows that $TT(G)$ can at most be cyclic of order
$2$. Let $1_{7}$ denote the linear character of order $2$ of $N$. We
have $e_{0}\cdot\Ind_{N}^{G}(1_{7})=\chi_{4}+\chi_{5}+\chi_{12}+2\chi_{13}+\chi_{16}+\chi_{17}+\chi_{20}+2\chi_{21}$. Again
using the possible degrees and values of trivial source characters given
by Lemma~\ref{lem:LandScott} it is easy to see that $\Gamma_{G}(1_{7})$
must afford the character $\chi_{4}+\chi_{5}+\chi_{13}$, which takes
value $1$ on all the non-trivial $3$-elements and is thus endotrivial
Theorem~\ref{lem:Greenchars}. We conclude that $TT(G)\cong
\IZ/2$. 

\textbf{Characteristic $5$}.
In this case $P$ is elementary abelian of rank $2$ and $N\cong
5^{2}:D_{12}$ so that $X(N)\cong (\IZ/2)^{2}$.  The principal $5$-block of
$kG$ is the unique block with full defect. Let  $X(N)=\{1_{N}, 1_{1},
1_{3},1_{4}\}$. Inducing the non-trivial characters in $X(N)$ we get the
following. 
First $e_{0}\cdot\Ind_{N}^{G}(1_{1})=\chi_{14}+\chi_{15}+\chi_{19}$, so
that  by the degree and character value criteria we obtain that
$\Gamma_{G}(1_{1})$ affords $e_{0}\cdot\Ind_{N}^{G}(1_{1})$, which does
not take value $1$ on all non-trivial $5$-elements. 
Second $e_{0}\cdot \Ind_{N}^{G}(1_{3})=\chi_{14}+\chi_{15}+\chi_{16}+\chi_{17}$. Again by the degree
criteria, we must have that $\Gamma_{G}(1_{3})$ affords
$e_{0}\cdot \Ind_{N}^{G}(1_{3})$, as it takes value $1$ on all the non-trivial
$5$-elements. By Theorem~\ref{lem:Greenchars}, $\Gamma_{G}(1_{3})$
is endotrivial whereas $\Gamma_{G}(1_{1})$ is not. Whence $TT(G)\cong
\IZ/2$.  


\subsection{The group $G=\spor{M_{23}}$ in characteristic
  $3$}\label{sec:m23tors} 

In this case $P$ is elementary abelian of order $9$ and 
$N\cong 3^{2}:Q_{8}.2$. It follows that $X(N)\cong (\IZ/2)^2$.

By \cite[Thm. 7.1]{LMS},  $\spor{M}_{23}$ has a simple self-dual
endotrivial module $S_{253}$ of dimension $253$.
Hence $\IZ/2\cong \left\langle [S_{253}]\right\rangle \leq TT(G)$.

Now, the natural permutation $kG$-module on $23$ points has a
$22$-dimensional composition factor $S_{22}$, which must be a direct
summand. Therefore $S_{22}$ is a trivial source module with vertex $P$
and its $kN$-Green correspondent is simple (see \cite[\S
3.7]{DANZKUEL}). 
Thus it must have dimension at most $2$. Since $22\equiv 1\pmod{3}$, we
conclude that $S_{22}$ is the $kG$-Green correspondent of a
$1$-dimensional $kN$-module. But as $22\not\equiv 1\pmod{|P|}$, the
module $S_{22}$ is not endotrivial.  Consequently $TT(G)\cong \IZ/2$.

\subsection{The group $G=\tw2\,{\spor F}_4(2)'$}\label{sec:2f42'}\

\textbf{Characteristic $3$}. In this case $P$ is extraspecial of order
$27$ and exponent $3$ and $N\cong 3^{1+2}_+ : D_8$. 
Hence $X(N)\cong (\IZ/2)^2$ and we denote $1_{N}, 1_{1},1_{2},1_{3}$  the four
non-isomorphic $1$-dimensional $kN$-modules. 

The group $G$ has a maximal subgroup $H\cong \PSL_{3}(3).2$ of index $1600$
which contains $N$ and $X(H)=\{1_{H},1_{a}\}$ has order $2$. 
We may assume that $\Gamma_{H}(1_{N})=1_{H}$ and
$\Gamma_{H}(1_{1})=1_{a}$. 
Induction from $N$ to $H$ yields
$$\Ind_{N}^{H}(1_{2})=\chi_{5}+\chi_{13}\qbox{and}
\Ind_{N}^{H}(1_{3})=\chi_{6}+\chi_{14}\,.$$ 
Since
$\chi_{5}(1)=\chi_{6}(1)=13$ and $\chi_{13}(1)=\chi_{14}(1)=39$,  and 
$13,39, 13+39\not\equiv1\pmod{|P|}$,
the $kH$-Green correspondents $\Gamma_H(1_{2})$ and $\Gamma_H(1_{3})$ 
cannot be endotrivial.
Similarly, $\Ind_{H}^{G}(1_{a})=\chi_{7}+\chi_{19}$ where
$\chi_{7}(1)=300\not\equiv 1\pmod{|P|}$, $\chi_{19}(1)=1300\not\equiv
1\pmod{|P|}$ and $(\chi_{7}+\chi_{19})(1)\not\equiv1\pmod{|P|}$ so that
the $kG$-Green correspondent of $1_{1}$ is not endotrivial. 
Therefore $TT(H)\cong \IZ/2$ and $TT(G)\cong \{[k]\}$.

\textbf{Characteristic 5}. In this case $P$ is elementary abelian of
rank $2$ and ${N\cong(5^2):(4.\fA_{4})}$. 
Theorem \ref{thm:ti} says that $P$ is a T.I. set, so that by
Lemma~\ref{lem:omnibus}(4),(5) we get that
$TT(G)\cong TT(N)\cong X(N)$ because $N$ is strongly
$p$-embedded in $G$. Finally we compute that $X(N)\cong \IZ/6$.

\subsection{The group $G=\spor{HS}$}\label{sec:hstors3}\

\textbf{Characteristic 3}. 
In this case $P\cong 3^{2}$ and ${N\cong
2\times(3^{2}.SD_{16})}$. So $X(N)\cong (\IZ/2)^{3}$. 
By \cite[Thm. 7.1]{LMS}, $G$ has three simple endotrivial modules
$S_{4},S_{5},S_{6}$ all of dimension $154$, and affording the characters
$\chi_{4},\chi_{5},\chi_{6}$ respectively. From \cite[Sec.2]{LM} we have
$TT(G)= \langle[S_{4}], [S_{5}],[S_{6}]\rangle\cong (\IZ/2)^{2}$.  
Alternatively, a MAGMA computation (not involving Balmer's method described above) 
shows that there exists a $kN$-module $1_{a}\in X(N)$ such that its $kG$-Green 
correspondent is of dimension~$175\not\equiv 1\pmod{9}$, 
so that $TT(G)$ certainly cannot have order $8$.

\textbf{Characteristic 5}. 
In this case $P\cong 5_{+}^{1+2}$, and $N\cong 5_{+}^{1+2}:(8.2)$. Also $G$ has a
maximal subgroup $H^*\cong \PSU_{3}(5).2$ containing $N$.  We get  $X(N)\cong
\IZ/2\oplus \IZ/4$, $X(H^*)\cong \IZ/2$. We note that $N$ is strongly $5$-embedded in $H^*$, so that $TT(H^*)\cong \IZ/2\oplus\IZ/4$ by Lemma~\ref{lem:omnibus}.  
Now if $1_{a}\in X(H^*)$ is non-trivial, we have $e_{0}\cdot \Ind_{H^*}^{G}(1_{a})=\chi_{2}+\chi_{5}$, where $\chi_{2}(1)=22,\chi_{5}(1)=154$. Therefore 
$\Gamma_{G}(1_{1})=e_{0} \cdot \Ind_{H^*}^{G}(1_{a})$ and affords $\chi_{2}+\chi_{5}$, but is not endotrivial since its dimension is $176\not\equiv 1\pmod{|P|}$.
From the group structure of $X(N)$, we see that $TT(G)\leq \IZ/4$.  Now the linear character $1_{5}\in X(N)$ has order $4$ and 
$$e_{0}\cdot \Ind_{N}^{G}(1_{5})=\chi_{8}+\chi_{9}+\chi_{10}+\chi_{16}+\chi_{17}+2\cdot\chi_{22}\,.$$
(Note that $B_{0}$ is the unique $5$-block with full defect.) It follows
from the possible values of trivial source modules given by
Lemma~\ref{lem:LandScott} and the $5$-decomposition matrix of $G$ that
$\Gamma_{G}(1_{5})$ affords either the character $\chi_{8}+\chi_{10}$
or the character $\chi_{8}+\chi_{22}$. In both cases we obtain that
$\Gamma_{G}(1_{5})$ is endotrivial by Theorem~\ref{lem:Greenchars}
and we conclude that $TT(G)\cong \IZ/4$.


\subsection{The group $G=\spor{J_{3}}$ in characteristic 3}\label{sec:j3}\
In this case $P$ has order $3^{5}$ and $N\cong 3^{2}.(3^{1+2}):8$ so
that $X(N)\cong \IZ/8$ (note that $N$ is maximal in $G$).  The linear
character $1_{4}$ has order $4$ in $X(N)$ and 
$$e_{0}\cdot \Indhg{N}{G}(1_{4})=2\chi_{10}+2\chi_{13}+\chi_{14}+\chi_{15}+\chi_{16}+\chi_{17}+\chi_{18}+\chi_{19}\,.$$
It follows from the character table of $J_{3}$ that if
$\dim_{k}\Gamma_{G}(1_{4})\equiv 1\pmod{|P|}$, then its character cannot
take value 1 on all non-trivial $3$-elements. Hence, by
Theorem~\ref{lem:Greenchars}, $\Gamma_{G}(1_{4})$ is not
endotrivial. This proves that $TT(G)\leq \IZ/2$. 

\subsection{The group $G=\spor{M_{24}}$ in characteristic
  $3$}\label{sec:m12tors} We show that $TT(G)=\{[k]\}$.

In this case $P$ is extraspecial of order $27$ and exponent $3$ and
$N\cong 3_{+}^{1+2}:D_{8}$ where $D_8$ is a dihedral group of
order $8$.
From \cite{ATLAS}, we see that $G$ has a maximal subgroup $H\cong
M_{12}:2$ containing $N$. Therefore $X(H)\cong \IZ/2$ and $X(N)\cong (\IZ/2)^2$. 
We denote by $1_{H}, 1_{a}$ the elements of $X(H)$
and by $1_{N}, 1_{1}, 1_{2}$ and $1_{3}$ the elements of $X(N)$. 
By Green correspondence, we may assume that
$\Gamma_{H}(1_{N})=1_{H}$ and  $\Gamma_{H}(1_{1})=1_{a}$.
Inducing the character $1_{3}$ to $H$ gives
$$e_{0}\cdot \Ind_{N}^{H}(1_{3})=\chi_{12}+\chi_{15}+\chi_{16}+\chi_{17}+\chi_{21}\,.$$ 
(Note that the principal block is the unique $3$-block of $kH$ with full defect).
Using
Theorem~\ref{lem:Greenchars} and Lemma~\ref{lem:LandScott} it is easy  to check that  for degree
reasons, or character values on $3$-elements, $\Gamma_{H}(1_{3})$ cannot
be endotrivial. Consequently $TT(H)=X(H)\cong\IZ/2$.

Finally $\Ind_{H}^{G}(1_{a})=\chi_{2}+\chi_{17}$, where $\chi_{2}(1)=23$ and
$\chi_{17}(1)=1265\equiv 23\pmod{|P|}$, so that again for degree reasons
the $kG$-Green correspondent of $1_{a}$ is not endotrivial either. The
claim follows.

\subsection{The group $G=\spor{McL}$ and its 3-fold cover.}\label{sec:mcltors5}\
 
\textbf{Characteristic 3}. In this case $P$ has order $3^{6}$ and
$N\cong 3^{4}:(3^{2}:Q_{8})$ so that $X(N)\cong (\IZ/2)^{2}$. In addition $G$
has a subgroup $H\cong 3^{1+4}:(2.\fS_{5})$. Since $|H:N|=10<|P|$, we have
$TT(H)\cong X(H)\cong\IZ/2$. So let $1_{a}$ be the non-trivial linear
character of $H$. Then
$\Indhg{H}{G}(1_{a})=\chi_{3}+\chi_{15}+\chi_{20}$. Now as
$\chi_{3},\chi_{15},\chi_{20}$ all take a value strictly greater than~$1$
on conjugacy class $3a$, $\Gamma_{G}(1_{a})$ cannot be endotrivial by
Theorem~\ref{lem:Greenchars}. Whence $TT(G)=\{[k]\}$, whereas the group
$TT(3.\nspor{McL})$ is trivial by Lemma~\ref{lem:cext}(2).  
 
\textbf{Characteristic 5}. 
In this case $P=5_{+}^{1+2}$ and $N\cong5_{+}^{1+2}:3:8$ is a maximal
subgroup of $G$. Moreover $P$ is a T.I. set by Theorem~\ref{thm:ti} and
therefore by Lemma~\ref{lem:omnibus}(4),(5) we get that $TT(G)\cong  X(N)\cong \IZ/8$.
The situation is similar for the $3$-fold cover $3.\nspor{McL}$,
i.e. $\wt P\cong 5^{1+2}_+$ and is a T.I. set as well. The normaliser
$\wt N$ is an extension of degree $3$ of $N$ and $X(\wt N)\cong
\IZ/24$. Whence $TT(3.\nspor{McL})\cong  X(\wt N)\cong \IZ/24$.

\subsection{The group $G=\spor{He}$}\label{sec:hetors5}\

\textbf{Characteristic 3}.  In this case $P\cong 3_{+}^{1+2}$ 
and  $N\cong 3_{+}^{1+2}:D_{8}$ so that $X(N)\cong (\IZ/2)^{2}$. 
The group $G$ has a maximal subgroup $H\cong 3.A_{7}.2$ 
containing $N$. It follows from Lemma~\ref{lem:omnibus}(3) that  
$TT(H)\cong X(H)$, where $X(H)=\{1_{H},1_{2}\}\cong\IZ/2$. Hence 
$TT(G)\leq TT(H)\cong \IZ/2$ via restriction and we only need 
to check whether $\Gamma_{G}(1_{2})$ is endotrivial. But  
$\Ind_{H}^{G}(1_{2})=\chi_{14}+\chi_{15}+\chi_{22}+2\cdot \chi_{29}+\chi_{30}+\chi_{31}$
so that it follows easily from the values of 
$\chi_{14},\chi_{15},\chi_{22},\chi_{29},\chi_{30},\chi_{31}\in\Irr(G)$ 
that the character afforded by $\Gamma_{G}(1_{2})$ cannot take 
value $1$ on conjugacy class $3b$. Whence $TT(G)\cong \{[k]\}$.

\textbf{Characteristic 5}.  
In this case $P$ is elementary abelian of order $25$ and
$N\cong5^{2}:(4.\fS_{4})$. So $X(N)\cong \IZ/6$.
By \cite[Thm. 7.1]{LMS}, $G$ has two simple trivial source endotrivial
modules $S_{51}$ and $S_{51}^{*}$, both of dimension $51$, affording the
characters $\chi_{2},\chi_{3}\in\Irr(G)$. Now $\chi_{1}\res{}{N}$ and
$\chi_{2}\res{}{N}$ have exactly two linear constituents $\chi_{4}$ and
$\chi_{6}$, both of which have order $3$ in $X(N)$. Therefore
$S_{51}$, $S_{51}^{*}$ have order $3$ in $TT(G)$, since they are the
$kG$-Green correspondents of $1$-dimensional modules of order $3$ in
$X(N)$, showing that $TT(G)$ has order at least $3$.
Finally a MAGMA computation shows that $TT(G)\cong \IZ/3$.

\subsection{The group $G=\spor{Ru}$ and its cover in
  characteristic $3$}\label{sec:hetors5}

In this case $P$ is extraspecial of order $27$ and exponent $3$ and
$N\cong 3_{+}^{1+2}:SD(16)$. So $X(N)\cong (\IZ/2)^2$. For $2.G$, we have $\wt
P\cong P$ and $X(\wt N)\cong\IZ/2\oplus\IZ/4$.

By \cite[Thm. 7.1]{LMS}, $G$ has a self-dual simple trivial source
endotrivial module $S_{406}$ of dimension $406$, affording the character
$\chi_{4}\in\Irr(G)$. Therefore $\IZ/2\cong \langle[S_{406}]\rangle\leq
TT(G)$.
Again by \cite[Thm. 7.1]{LMS}, $2.G$ has two faithful simple endotrivial
modules $S_{28}$ and $S_{28}^{*}$, both of dimension $28$, affording the
characters $\chi_{37},\chi_{38}\in\Irr(2.G)$. Moreover
$\chi_{37}\otimes\chi_{37}=\chi_{2}+\chi_{4}$ where $\chi_{2}$ has
defect zero. Reduction modulo $3$ yields
$S_{28}\otimes S_{28}\cong S_{406}\oplus\proj$ saying that $S_{28}$ is
a trivial source module of order $4$ in $TT(\wt G)$.
Therefore $\IZ/4\cong \langle[S_{28}]\rangle\leq TT(2.G)$.

Now $G$ has a maximal subgroup $H\cong (2^{6}:\PSU_{3}(3)):2$ containing $N$ with $X(H)\cong \IZ/2$.
If $1_{2}\in X(H)$ denotes the non-trivial character of $H$, then $e_{0}\cdot \Ind_{H}^{G}(1_{2})=\chi_{9}+\chi_{25}$. 
For degree reasons $\Gamma_{G}(1_{1})$ affords
$\chi_{9}+\chi_{25}\in\IZ\Irr(G)$, but $\chi_{9}+\chi_{25}$ takes value
$31$ on class $3a$, so that $\Gamma_{G}(1_{1})$ is not endotrivial by
Theorem~\ref{lem:Greenchars}. 
This yields $TT(H)\cong TT(G)\cong \IZ/2$ and $TT(2.G)\cong \IZ/4$.

\subsection{The group $G=\spor{Suz}$}\label{sec:suztors5} \

\textbf{Characteristic 3}. 
In this case $P\cong 3^{5}:3^{2}$, $N\cong 3^{5}:(3^{2}:SD_{16})$ and
$X(N)\cong (\IZ/2)^{2}$. Moreover $G$ has a subgroup $H\cong 3^{5}:M_{11}$
containing $N$, so that $TT(G)\leq TT(H)$ via restriction. Inducing the
three non-trivial linear characters $1_{1},1_{2}, 1_{4}$ of $N$ to $H$
we get: 
$$e_{0}\cdot \Ind_{N}^{H}(1_{1})=\chi_{2}+\chi_{9},\quad e_{0}\cdot \Ind_{N}^{H}(1_{2})=\chi_{5}+\chi_{8},\quad  e_{0}\cdot \Ind_{N}^{H}(1_{4})=\chi_{10}$$
(Note that $H$ has a unique block of full defect.) Since
$\chi_{2},\chi_{5},\chi_{8}, \chi_{9},\chi_{10}\in\Irr(H)$ all have
degree strictly greater than $1$ and class $3a$ in their kernel, it
follows that  the $kH$-Green correspondents
$\Gamma_{H}(1_{1}),\Gamma_{H}(1_{2}),\Gamma_{H}(1_{4})$ cannot be
endotrivial by Theorem~\ref{lem:Greenchars}. A fortiori
$TT(G)=TT(H)=\{[k]\}$. 

\textbf{Characteristic 5}. 
By \cite[Thm. 7.1]{LMS} the character $\chi_{5}\in\Irr(\spor{G})$ reduces modulo $5$ to a
  self-dual endotrivial $kG$-module $S_{1001}$ of dimension~1001. Therefore $\IZ/2\leq
  TT(\spor{Suz})\leq \IZ/2\oplus \IZ/4$ since  $X(N)\cong \IZ/2\oplus\IZ/4$.
  Moreover, $G$ has a maximal subgroup $H\cong \spor{J}_{2}:2$
  containing the normaliser $N\cong 5^{2}:(4\times\fS_{3})$. Similar
  computations as above show that $H$ has two trivial source endotrivial
  modules of order $4$, both affording the character
  $\chi_{18}+\chi_{19}$, with $\chi_{18},\chi_{19}\in\Irr(H)$. 
 The group $H$ also has a trivial source module of order 2 affording the
 character  $\chi_{18}+\chi_{23}$ of degree 666, hence not
 endotrivial. It follows that $\IZ/2\leq TT(G)\leq
 \IZ/4$. Finally a MAGMA computation shows that
 $TT(G)\cong\IZ/2$.    

\subsection{The group $G=\spor{O'N}$ and its three-fold cover in characteristic $7$.}\label{sec:ONtors7}
In this case $P=7_{+}^{1+2}$ and $N\cong7_{+}^{1+2}:(D_{8}\times 3)$ so that $X(N)\cong \IZ/2\oplus \IZ/6$. The principal block is the unique block with full defect.
Now $N$ is contained in a maximal subgroup $H\cong \PSL_{3}(7):2$.  
First, let $1_{a}\in X(H)\cong \IZ/2$ be the non-trivial linear character of
$H$. Then $e_{0}\cdot \Ind_{H}^{G}(1_{a})=\chi_{10}$, which does not
take value  $1$ on $7$-elements. Hence $\Gamma_{G}(1_{a})$ is not
endotrivial by Theorem~\ref{lem:Greenchars}. 
Since $TT(G)$ identifies with a subgroup of $TT(H)$ via restriction, it
remains to show that $TT(H)=X(H)$.  For this we let $1_{i}\in X(N)$,
$1\leq i\leq 12$ denote the non-trivial linear characters of $N$. 
The character  $1_{6}$ has order $3$ and
$\Ind_{N}^{H}(1_{6})=\chi_{7}+\chi_{9}$. The characters $1_{2}, 1_{4}$
have  order $2$, the $kH$-Green correspondent of $1_{4}$ is $1_{a}$ and
$\Ind_{N}^{H}(1_{2})=\chi_{6}+\chi_{19}$. Therefore the characters of
$\Gamma_{H}(1_{6})$ and $\Gamma_{H}(1_{2})$ cannot take value $1$ on
non-trivial $7$-elements, so that $\Gamma_{H}(1_{6})$ and
$\Gamma_{H}(1_{2})$ are not endotrivial  by
Theorem~\ref{lem:Greenchars}. It follows that $TT(H)=X(H)\cong\IZ/2$
and $TT(G)\cong \{[k]\}$.   
Similarly to the situation for $G$, the three-fold cover $3.G=3.\spor{O'N}$ has
a maximal subgroup of shape $3\times H$ containing $\wt N$. We have
$X(\wt N)\cong \IZ/3\oplus\IZ/6$ and $X(3\times H)\cong \IZ/6$. Again since
Green and Brauer correspondences commute, we only need to consider
faithful modules. Let $1_{c},1_{d}\in X(3\times H)$ be the two modules
of order~$6$. Inducing $1_{c}, 1_{d}$ to $3.G$ in GAP, we get that the
characters afforded by  $\Gamma_{G}(1_{c})$ and $\Gamma_{G}(1_{d})$ are
$\chi_{59},\chi_{60}\in\Irr(3.G)$. But both these characters
take the value $15$ on conjugacy class $7a$, so that $\Gamma_{G}(1_{c})$,
$\Gamma_{G}(1_{d})$ cannot be endotrivial by
Theorem~\ref{lem:Greenchars}. It follows that $TT(3.G)$ has no
faithful element of order $3$ and so $TT(3.G)\cong \{[k]\}$.

\subsection{The group $G=\spor{Co}_{3}$ in characteristic
  $3$}\label{sec:co33tors} 
In this case $|P|=3^{7}$. We choose maximal subgroups $H_{1}\cong
3^{5}:(2\times \spor{M}_{11})$ and $H_{2}\cong 3^{1+4}:(4.\fS_{6})$ of $G$
both containing $N$ so that $TT(G)\leq TT(H_{1})$ and $TT(G)\leq
TT(H_{2})$ via restriction. 
Since $|H_{1}:N|=55$ and $|H_{2}:N|=10$, trivial source endotrivial
$kH_{1}$-,\,$kH_{2}$-modules must be $1$-dimensional by
Lemma~\ref{basics}(7), that is $TT(H_{1})\cong X(H_{1})\cong \IZ/2$ and
$TT(H_{2})\cong X(H_{2})\cong (\IZ/2)^{2}$. Then inducing the linear
characters of $N$ in GAP, we find that there is a module $1_{a}\in X(N)$
such that $\Gamma_{H_{1}}(1_{a})\in X(H_{1})$ has order $2$, but
$\Gamma_{H_{2}}(1_{a})$ has  dimension $10$ and thus cannot be
endotrivial. Whence $\Gamma_{G}(1_{a})$ is not endotrivial either and $TT(G)=\{[k]\}$.

\subsection{The group $G=\spor{Co}_{2}$ in characteristic
  $3$}\label{sec:co33tors} 
In this case $|P|=3^{6}$. We choose maximal subgroups $H_{1}\cong
3^{5}:(2\times \spor{M}_{11})$ and $H_{2}\cong 3^{1+4}:(4.\fS_{6})$ of $G$
both containing $N$ so that $TT(G)\leq TT(H_{1})$ and $TT(G)\leq
TT(H_{2})$ via restriction.  
Since $|H_{2}:N|=40$, trivial source endotrivial $kH_{2}$-modules must be 1-dimensional by Lemma~\ref{basics}(7) so that  $TT(H_{2})\cong X(H_{2})\cong \IZ/2$. 
Let $1_{a}\in X(N)$ be the module such that $\Gamma_{H_{2}}\in X(H_{2})$ is 1-dimensional of order $2$. Then 
$$\Ind_{N}^{H_{1}}(1_{a})=\chi_{11}+\chi_{30}+\chi_{31}+\chi_{37}$$
where $\chi_{11},\chi_{30},\chi_{31},\chi_{37}\in\Irr(H_{1})$ all take values greater than $1$ on class $3b$. Thus, by Theorem~\ref{lem:Greenchars}, $\Gamma_{H_{1}}(1_{a})$ cannot be endotrivial, and neither can $\Gamma_{G}(1_{a})$. Hence $TT(G)=\{[k]\}$. 

\subsection{The group $G=\spor{Fi}_{22}$ and its
  covers}\label{sec:fi22tors5} \

\textbf{Characteristic 3}. In this case $P$ has order $3^9$, exponent
$9$ and is an extension of an elementary abelian $3$-group of order
$3^5$ with a Sylow $3$-subgroup of $\PSU_4(2)$, which is itself a wreath
product $3\wr3$. Then $N$ is contained in a subgroup $H\cong \spor
O_7(3)$ of $G$. By \cite[Theorem~6.2]{CMN1}, we have $TT(H)=\{[k]\}$,
and therefore $TT(G)=\{[k]\}$.
  
\textbf{Characteristic 5}.  In this case $P\cong 5^{2}$, $N$ has order $2^{5}\cdot 3\cdot 5^{2}$ and $X(N)\cong \IZ/4$.
  By \cite[Thm. 7.1]{LMS}, the character $\chi_{4}\in\Irr(G)$ reduces modulo $5$ to a
  self-dual endotrivial $kG$-module $S_{1001}$ of dimension~1001, and by \cite[Sec.2]{LM} we have $TT(G)=\left<[S_{1001}]\right>\cong \IZ/2$.
  Furthermore  \cite[Sec.2]{LM} also shows that $TT(2.G)\cong
  \IZ/2\oplus\IZ/2$, while $X(\wt N)\cong \IZ/2\oplus \IZ/4$, and $TT(3.G)\cong
  \IZ/6$, where by \cite[Thm. 7.1]{LMS}, the characters
  $\chi_{115}, \overline{\chi_{115}}\in\Irr(3.\nspor{Fi}_{22})$ reduce
  modulo $5$ to $k[3.G]$-modules $S_{351}$ and $(S_{351})^*$, both
  faithful simple trivial source endotrivial of dimension~$351$. It
  follows that $TT(6.G)\cong \IZ/6\oplus\IZ/2$.  
 
\subsection{The group $G=\spor{HN}$}\label{sec:hntors5} \

\textbf{Characteristic 3}. In this case $|P|=3^{6}$ and we can choose
maximal subgroups $H_{1}\cong 3^{4}:2.(\fA_{4}\times\fA_{4}).4$ and
$H_{2}\cong 3^{1+4}:(4.\fA_{5})$ of $G$, both containing $N$ so that $TT(G)\leq
TT(H_{1})$ and $TT(G)\leq TT(H_{2})$ via restriction. 
Since $|H_{1}:N|=16$ and $|H_{2}:N|=10$, trivial source
endotrivial $kH_{1}$- and $kH_{2}$-modules must be $1$-dimensional by
Lemma~\ref{basics}(7), that is  $TT(G)\leq TT(H_{1})\cong X(H_{1})\cong \IZ/4$ and $TT(G)\leq
TT(H_{2})\cong X(H_{2})\cong\IZ/2$. But if $1_{a}\in X(N)$ is of order $2$
such that $\Gamma_{H_{2}}(1_{a})$ is $1$-dimensional, then
$\Gamma_{H_{1}}(1_{a})$ affords the irreducible character
$\chi_{19}\in\Irr(H_{1})$ of degree $16$, hence is not endotrivial and
neither is $\Gamma_{G}(1_{a})$, whence $TT(G)=\{[k]\}$.  
 
\textbf{Characteristic 5}.
In this case $|P|=5^{6}$. Similarly as above, we choose
maximal subgroups $H_{1}\cong 5^{1+4}:2^{1+4}.5.4$ and $H_{2}\cong
5^{2}.5.5^{2}.(4.\fA_{5})$ of $G$, both containing $N$. 
Since $|H_{1}:N|=16$ and $|H_{2}:N|=6$, we get $TT(H_{1})\cong X(H_{1})\cong \IZ/4$ and
$TT(H_{2})\cong X(H_{2})\cong \IZ/2$. Now if  $1_{a}\in X(N)$ denotes the
$1$-dimensional module such that $\Gamma_{H_{1}}(1_{a})\in X(H_{1})$ has
order $2$, then inducing $1_{a}$ to $H_{2}$ at the level of characters
yields $\Ind_{N}^{H_{2}}(1_{a})=\chi_{7}+\chi_{8}$ where both
$\chi_{7},\chi_{8}\in\Irr(H_{2})$ have degree $3$. Therefore
$\Gamma_{H_{2}}(1_{a})=\Ind_{N}^{H_{2}}(1_{a})$ with dimension $6$ and
is not endotrivial, whence $TT(G)=\{[k]\}$.  

\subsection{The group $G=\spor{Ly}$}\label{sec:lytors5} \

\textbf{Characteristic 3}. In this case $|P|=3^{7}$. 
We choose maximal subgroups $H_{1}\cong 3^{5}:(2\times \spor{M}_{11})$  and $H_{2}\cong 3^{2+4}:2.\fA_{5}.D_{8}$ both containing $N$.
Since $|H_{1}:N|=55$ and $|H_{2}:N|=10$, we get $TT(G)\leq
TT(H_{1})\cong X(H_{1})\cong \IZ/4$ and $TT(G)\leq TT(H_{2})\cong
X(H_{2})\cong \IZ/2$. Thus an argument similar to that used for the simple group $HN$  applies  and it follows that $TT(G)=\{[k]\}$.

\textbf{Characteristic 5}.
In this case $G$ contains a maximal subgroup $H\cong \spor{G}_{2}(5)$, so that
$TT(G)$ identifies with a subgroup of $TT(H)$ via restriction. Since
$TT(H)=\{[k]\}$ by \cite[Thm. A]{CMN1}, we have $TT(G)=\{[k]\}$.

\subsection{The group $G=\spor{Fi}_{23}$ in characteristic
  $5$}\label{sec:fi23tors5}  
 In this case $P$ is elementary abelian of rank~2 and $X(N)\cong \IZ/2\oplus\IZ/4$.
 By \cite[Thm. 7.1]{LMS} the character  $\chi_{9}\in\Irr(G)$ reduces
  modulo $5$ to a self-dual endotrivial $kG$-module  $S_{111826}$ 
  of dimension $111826$. Therefore $\IZ/2\leq TT(G)\leq \IZ/2\oplus\IZ/4$.
Now a maximal subgroup of $G$ is $H=2.\nspor{Fi}_{22}$ and $H$ contains
$N$, so that $TT(G)\leq TT(2.\nspor{Fi}_{22})\cong (\IZ/2)^{2}$ via
restriction. Moreover, if $S_{1001}$ denotes the self-dual simple
endotrivial module of Subsection~\ref{sec:fi22tors5} of dimension~$1001$,  we
compute that $\Gamma_{G}(S_{1001})$ affords $\chi_{5}\in\Irr(G)$ with
degree~$25806\not\equiv 1\pmod{|P|}$. It follows that
$TT(G)\cong\IZ/2$.

\subsection{The group $G=\spor{Co}_{1}$}\label{sec:co1tors5} \

 \textbf{Characteristic 3}. 
 We choose maximal subgroups  $H_{1}\cong
3^{1+4}.2.\PSU_{4}(2).2$ and $H_{2}\cong 3^{6}:2.\nspor{M}_{12}$ both containing $N$ and of index smaller than $|P|=3^{9}$.
Then an argument similar to that used for the simple group $\spor{HN}$  applies  and it follows that $TT(G)=\{[k]\}$. 
 
 \textbf{Characteristic 5}. 
  In this case $|P|=5^{4}$ and $X(N)\cong \IZ/4\oplus\IZ/4$.   Moreover the
  group $G$ has a maximal subgroup $H\cong 5^{1+2}:\GL_{2}(5)$
  containing $N$ and   such that $|H:N|=5$. Thus by
  Lemma~\ref{basics}(7), $TT(G)\leq TT(H)\cong X(H)\cong \IZ/4$. 

\subsection{The group $G=\spor{J}_{4}$}\label{sec:j4tors11}\
 
 \textbf{Characteristic 3}.  In this case $P=3_{+}^{1+2}$, $N=(2\times
 3_{+}^{1+2}:8):2$  and $X(N)\cong \IZ/2\oplus\IZ/4$. We show that
 $TT(G)$ has no element of order $2$, hence is trivial. 
The group $G$ has a maximal subgroup $H\cong 2_{+}^{1+12}.3.M_{22}:2$
containing $N$ and such that $X(H)\cong\IZ/2$. 
Let $1_{1}, 1_{2}, 1_{3}$ be the three linear characters of $N$ of order
$2$. Then $\Gamma_{H}(1_{2})=1_{a}$ the non-trivial linear character of
$H$. 
The irreducible constituents of $\Ind_{N}^{H}(1_{1})$ and
$\Ind_{N}^{H}(1_{3})$ all take a value greater than $1$ on class $3a$,
so that by Theorem~\ref{lem:Greenchars} $\Gamma_{H}(1_{1})$ and
$\Gamma_{H}(1_{3})$ are not endotrivial. 
Finally $e_{0}\cdot \Ind_{H}^{G}(1_{a})=\chi_{40}+\chi_{52}$, where both
$\chi_{40}, \chi_{52}\in\Irr(G)$ take value greater than~$1$ on conjugacy
class $3a$. Hence $\Gamma_{G}(1_{2})$ is not endotrivial either. The
claim follows. 
 
\textbf{Characteristic 11}.  
The group $\spor J_4$ possesses a T.I. Sylow $11$-subgroup
 $P=11_{+}^{1+2}$ by Theorem~\ref{thm:ti}, and we know that $N\cong
 11_{+}^{1+2}:(5\times (2.\fS_{4}))$ (cf \cite{ATLAS}). 
Therefore $X(N)\cong \IZ/10$ and  it
 follows from Lemma~\ref{lem:omnibus}(4),(5) that 
$TT(G)\cong X(N)\cong \IZ/10$.

\subsection{The group $G=\spor{B}$ in characteristic~$5$}\label{sec:btors5} \
The group $\spor{B}$ has a Sylow $5$-subgroup of order $5^{6}$, whose
normaliser $N$ is contained in a maximal subgroup $H\cong \spor{HN:2}$. 
It follows directly from Subsection~\ref{sec:hntors5} that $TT(H)\cong \IZ/2$,
hence $TT(G)\leq \IZ/2$. (We note that in this case $X(N)\cong \IZ/4\oplus\IZ/4$.)


\section{Endotrivial modules for sporadic groups - summary of results}\label{sec:tables}

Table~\ref{tbl:sumup} below summarises the isomorphism types of the
groups $T(G)$ of endotrivial $kG$-modules that we could determine in the
case when the Sylow $p$-subgroups have $p$-rank at least $2$.
An empty entry means that the Sylow $p$-subgroups for the corresponding
prime are either cyclic or $p\nmid |G|$. An isomorphism ``\,$\cong T(G)$\,'' indicates that 
the isomorphism is obtained via inflation. A question mark indicates that only
a partial result for the structure of $TT(G)$ has been
obtained. In these cases we summarise the structure of the
group $X(N)$ in Table~\ref{tab:sporX(N)}, which we obtain using
\cite{GL,GLS3,CTblLib,MAGMA}. The motivation for such computations is
Lemma~\ref{lem:omnibus}(\ref{omni2}), asserting that
$TT(G)\leq X(N)$. A question mark bounded by a group,
e.g. $(?\leq \IZ/2)$ for the group $\spor{B}$ in characteristic~$5$,
indicates that we have found a sharper bound for $TT(G)$ than the one
given by $X(N)$.  

{\scriptsize
\begin{table}[htbp]  \caption{$T(G)$ for covering groups of sporadic groups with $p^{2}$ dividing $|G|$} \label{tbl:sumup}
\vskip -1pt
\[\begin{array}{|c|c|c|c|c|c|c|}
\hline
\hbox{Group $G$ }&p=2&p=3&p=5&p=7&p=11&p=13\\
\hline\hline
\spor M_{11}&\Z\oplus\Z/2&\IZ\oplus(\IZ/2)^{2}&&&&\\
\hline\hline
\spor M_{12}&\Z&\IZ^{3}&&&&\\   \cline{2-7}
2.\nspor M_{12}&\Z&\cong T(M_{12})&&&&\\  
\hline\hline
\spor J_{1}&\Z&&&&&\\
\hline\hline
\spor M_{22}&\Z&\IZ\oplus(\IZ/2)^2 &&&&\\   \cline{2-7}
2.\nspor M_{22}&\Z&\IZ\oplus\IZ/2\oplus \IZ/4&&&&\\  \cline{2-7}
3.\nspor M_{22}&\Z&\IZ&&&&\\   \cline{2-7}
4.\nspor M_{22}&\Z&\cong T(2.\nspor{M}_{22})&&&&\\   \cline{2-7}
6.\nspor M_{22}&\Z&\IZ&&&&\\   \cline{2-7}
12.\nspor M_{22}&\Z&\IZ&&&&\\   \cline{2-7}
\hline\hline
\spor J_{2}&\Z&\IZ\oplus \IZ/2&\IZ\oplus \IZ/2 &&&\\    \cline{2-7}
2.\nspor J_{2}&\Z&\cong T(\spor J_{2})&\cong T(\spor J_{2}) &&&\\
\hline\hline
\spor M_{23}&\Z&\IZ\oplus \IZ/2&&&&\\
\hline\hline
\tw2\spor F_4(2)'&\Z&\IZ^{2}&\IZ\oplus \IZ/6&&&\\
\hline\hline
\spor{HS}&\Z&\IZ\oplus (\IZ/2)^2&\IZ^{2}\oplus \IZ/4&&&\\     \cline{2-7}
2.\nspor{HS}&\Z&\cong T(\spor{HS})&\cong T(\spor{HS})&&&\\
\hline\hline
\spor J_{3}&\Z&\IZ\oplus (?\leq\IZ/2)&&&&\\    \cline{2-7}
3.\nspor J_{3}&\Z\oplus \Z/3 &\IZ&&&&\\
\hline\hline
\spor M_{24}&\Z&\IZ^{2}&&&&\\
\hline\hline
\spor{McL}&\Z&\IZ&\IZ\oplus \IZ/8&&&\\     \cline{2-7}
3.\nspor{McL}&\Z&\IZ&\IZ\oplus \IZ/24&&&\\
\hline\hline
\spor{He}&\Z&\IZ^{2}&\IZ\oplus \IZ/3 &\IZ^{3}\oplus ?&&\\
\hline\hline
\spor{Ru}&\Z&\IZ\oplus \IZ/2&\IZ^{2}\oplus ?&&&\\     \cline{2-7}
2.\nspor{Ru}&\Z&\IZ\oplus \IZ/4&\cong T(\spor{Ru})&&&\\
\hline\hline
\spor{Suz}&\Z&\IZ&\IZ\oplus \IZ/2&&&\\     \cline{2-7}
2.\nspor{Suz}&\Z&\cong T(\spor{Suz})&\cong T(\spor{Suz})&&&\\     \cline{2-7}
3.\nspor{Suz}&\Z& \IZ &\cong T(\spor{Suz})&&&\\     \cline{2-7}
6.\nspor{Suz}&\Z& \IZ &\cong T(\spor{Suz})&&&\\  
\hline\hline
\spor{O'N}&\Z&\IZ\oplus (?\leq \IZ/2)&&\IZ^{3}&&\\      \cline{2-7}
3.\nspor{O'N}&\Z&\IZ&&\IZ^{3}&&\\
\hline\hline
\spor{Co}_3&\Z&\IZ&\IZ\oplus ?&&&\\
\hline\hline
\spor{Co}_2&\Z&\IZ&\IZ\oplus ?&&&\\
\hline\hline
\spor{Fi}_{22}&\Z&\IZ&\IZ\oplus \IZ/2&&&\\     \cline{2-7}
2.\nspor{Fi}_{22}&\Z&\cong T(\spor{Fi}_{22})&\IZ\oplus (\IZ/2)^{2}&&&\\     \cline{2-7}
3.\nspor{Fi}_{22}&\Z&\IZ&\IZ\oplus \IZ/6&&&\\  \cline{2-7}
6.\nspor{Fi}_{22}&\Z&\IZ&\IZ\oplus \IZ/6\oplus\IZ/2&&&\\
\hline\hline
\spor F_5 = \spor{HN}&\Z&\IZ&\IZ&&&\\
\hline\hline
\spor{Ly}&\Z& \IZ &\IZ&&&\\
\hline\hline
\spor{Th}&\Z&\IZ\oplus ?&\IZ\oplus ?&\IZ\oplus ?&&\\
\hline\hline
\spor{Fi}_{23}&\Z&\IZ\oplus ?&\IZ\oplus \IZ/2&&&\\
\hline\hline
\spor{Co}_1&\Z&\IZ&\IZ^{2}\oplus( ?\leq\IZ/4)&\IZ\oplus ?&&\\      \cline{2-7}
2.\nspor{Co}_1&\Z&\cong T(\spor{Co}_1)&\cong T(\spor{Co}_1)&\cong T(\spor{Co}_1)&&\\
\hline\hline
\spor J_4&\Z&\IZ&&&\IZ\oplus \IZ/10&\\
\hline\hline
\spor{Fi}'_{24} &\Z&\IZ\oplus ?&\IZ\oplus ?&\IZ^{3}\oplus ?&&\\     \cline{2-7}
3.\nspor{Fi}'_{24} &\Z&\IZ&\cong T(\spor{Fi}'_{24})&\cong T(\spor{Fi}'_{24})&&\\
\hline\hline
\spor F_2 = \spor B&\Z&\IZ\oplus ?&\IZ\oplus(?\leq \IZ/2)&\IZ\oplus ?&&\\     \cline{2-7}
2.\nspor F_2 = 2.\nspor B&\Z&\cong T(\spor{B})&\cong T(\spor{B})&\cong T(\spor{B})&&\\
\hline\hline
\spor F_1 = \spor M&\Z&\IZ\oplus ?&\IZ\oplus?&\IZ\oplus?&\IZ\oplus ?&\IZ^{2}\oplus ?\\
\hline
\end{array}\]
\end{table}
}


Finally, we assume that $G$ is quasi-simple with $G/Z(G)$ sporadic simple and
a Sylow $p$-subgroup $P$ of $G$ is cyclic. Then $P\cong C_{p}$ and $p\neq 2$ (see
\cite{ATLAS}). 
We use Theorem~\ref{thm:cyclic} with, in our case, $H=N_{G}(P)$ and
${e=|N_{G}(P):C_{G}(P)|}$ is the inertial index of the principal block of
$G$. The results come down to a routine computation using the structure of
$H$ which can easily be computed using \cite{GLS3,GAP4}, and from 
which we get $X(H)$ and hence $T(G)$. We present them in Table~\ref{tbl:cyclic}. 

To compute $T(G)$ in this case, we recall that $T(G)\cong T(H)$, and the short exact sequence
$$\xymatrix{0\ar[r]&X(H)\ar[r]&T(H)\ar[r]^{\Res^H_P}&T(P)\ar[r]&0}$$
splits if and only if the class $[\Omega^{2}(k_{H})]$ is a square in $X(H)$. 
In particular the sequence splits if $e$ is odd
in which case $T(G)\cong X(H)\oplus\IZ/2$. Moreover $T(G)$ always
contains a subgroup $\left<\Omega\right>\cong\IZ/2e$. If $|X(H)|=e$,
then the principal block is the unique block containing
endotrivial modules so that $T(G)=\langle\Omega\rangle\cong\IZ/2e$. If
$e$ is even, we use the distribution of endotrivial modules into blocks 
as described in \cite[Table 7]{LMS}. Moreover, if $m.G$ is a covering group of $G$ and $p\,\nmid\,m$, then $T(G)$
identifies with a subgroup of $T(m.G)$ via inflation. This is enough to conclude in all cases. \\


\begin{table}[htbp]
\caption{The structure of $X(N)$ for the missing cases in Table \ref{tbl:sumup}}\label{tab:sporX(N)}
\[\begin{array}{|c|c|c||c|c|c||c|c|c|}
\hline
     G& p&  X(N) &   G& p& X(N)& G & p & X(N)   \cr
\hline
  \spor{He}  & 7& \IZ/6&  			    		\spor{Th}&7  &  \IZ/6  &			   	\spor{B}  & 7 &\IZ/2\oplus\IZ/6 \cr
  \spor{Ru} & 5&\IZ/2\oplus\IZ/4 &           		\spor{Fi}_{23} &3 &(\IZ/2)^{3}  &   	         \spor{M}  & 3 & (\IZ/2)^3                   \cr
  \spor{O'N}   & 3& \IZ/2&         		     	      	\spor{Co}_{1} &7 & \IZ/3\oplus\IZ/3 & 	\spor{M}  &  5& \IZ/2\oplus(\IZ/4)^{2}     \cr
\spor{Co}_{3} & 5& \IZ/2\oplus\IZ/4&     	    	\spor{Fi}_{24}' &3 &(\IZ/2)^{3}&	      		\spor{M}  & 7 &  \IZ/2\oplus\IZ/6		   \cr
\spor{Co}_{2}  & 5&\IZ/4 &     			     	\spor{Fi}_{24}'  &5  & \IZ/4&      	         		 \spor{M}  &11  & \IZ/5     \cr
\spor{Th} &3 &  (\IZ/2)^2  & 				\spor{Fi}_{24}' &7 &  \IZ/2\oplus\IZ/6 &         \spor{M} & 13& \IZ/12   \cr
\spor{Th} & 5&  \IZ/4  &					\spor{B}  &3  & (\IZ/2)^3   &			& & \cr	
\hline
\end{array}\]
\end{table}


{\scriptsize
\begin{table}[htbp]  \caption{$T(G)$ for covering groups of sporadic groups with cyclic Sylow $p$-subgroups}\label{tbl:cyclic}
\vskip -3pt
\[\begin{array}{|r|r|r|c|c||r|r|r|c|c|}
\hline
 G& p& X(H)& e &T(G) & G& p&  X(H) &e &T(G)  \\
\hline \hline
\tw2F_4(2)'& 13&\IZ/6 & 6 &\IZ/12 & \spor{He}&      17&  \IZ/8& 8&  \IZ/16  \\   \cline{1-10} \cline{1-10}

\spor{M}_{11}&    5& \IZ/4 &4  &   \IZ/8& \spor{Ru}&       7&  \IZ/6& 6&   \IZ/12   \\ \cline{2-5}
\spor{M}_{11}&   11&  \IZ/5& 5&     \IZ/10 &  2.\nspor{Ru}&       7&   \IZ/6&6 &   \cong T(\spor{Ru})  \\ \cline{1-5} \cline{1-5} \cline{7-10}

\spor{M}_{12}&    5&  \IZ/2\oplus \IZ/4& 4&\textcolor{black}{\IZ/2\oplus \IZ/8}& \spor{Ru}&      13& \IZ/12& 12&  \IZ/24   \\
2.\nspor{M}_{12}& 5&  \IZ/2\oplus \IZ/4& 4& \cong T(\spor{M}_{12})  & 2.\nspor{Ru}&       13& \IZ/12 &12 &   \cong T(\spor{Ru})    \\ \cline{2-5} \cline{7-10}
\spor{M}_{12}&   11&   \IZ/5& 5&   \IZ/10 & \spor{Ru}&      29& \IZ/14& 14&  \IZ/28 \\
2.\nspor{M}_{12}& 11&  \IZ/10& 5&   \IZ/2\oplus \IZ/10& 2.\nspor{Ru}&    29& \IZ/28& 14&   \textcolor{black}{\IZ/2\oplus\IZ/28}  \\  \cline{1-10} \cline{1-10}

\spor{J}_{1}&     3&  \IZ/2\oplus \IZ/2& 2 &  \textcolor{black}{\IZ/2\oplus \IZ/4} & \spor{Suz}&      7&  \IZ/6& 6&    \IZ/12    \\   \cline{2-5}
\spor{J}_{1}&     5&  \IZ/2\oplus \IZ/2& 2 &   \textcolor{black}{\IZ/2\oplus \IZ/4}  & 2.\nspor{Suz}&    7&   \IZ/6 & 6  &      \cong T(\spor{Suz})    \\  \cline{2-5}
\spor{J}_{1}&     7&   \IZ/6&  6&    \IZ/12 &3.\nspor{Suz}&    7&    \IZ/3\oplus  \IZ/6& 6 &    \textcolor{black}{\IZ/3\oplus \IZ/12}   \\     \cline{2-5}
\spor{J}_{1}&    11&  \IZ/10&  10&     \IZ/20 &  6.\nspor{Suz}&    7& \IZ/3\oplus  \IZ/6 & 6 &     \cong T(3.\nspor{Suz})  \\     \cline{2-5} \cline{7-10}
\spor{J}_{1}&    19&   \IZ/6&  6&     \IZ/12 & \spor{Suz}&     11& \IZ/10& 10&    \IZ/20   \\    \cline{1-5} \cline{1-5}

\spor{M}_{22}&    5& \IZ/4&  4&   \IZ/8     & 2.\nspor{Suz}&   11& \IZ/20& 10&   \textcolor{black}{\IZ/2\oplus \IZ/20} \\
2.\nspor{M}_{22}&  5& \IZ/2\oplus \IZ/4&  4&   \IZ/2\oplus \IZ/8  & 3.\nspor{Suz}&   11& \IZ/30& 10&  \IZ/60        \\
3.\nspor{M}_{22}&  5& \IZ/12&  4&  \IZ/24  & 6.\nspor{Suz}&   11& \IZ/60& 10&     \textcolor{black}{\IZ/2\oplus  \IZ/60}    \\  \cline{7-10}
4.\nspor{M}_{22}&  5& \IZ/2\oplus \IZ/8&  4& \textcolor{black}{\IZ/4\oplus \IZ/8}  & \spor{Suz}&     13&  \IZ/6& 6&     \IZ/12      \\
6.\nspor{M}_{22}&  5& \IZ/2\oplus \IZ/12&  4&   \textcolor{black}{\IZ/2\oplus \IZ/24} & 2.\nspor{Suz}&   13& \IZ/12& 6&     \textcolor{black}{\IZ/2\oplus \IZ/12}         \\
12.\nspor{M}_{22}& 5&  \IZ/2\oplus \IZ/24& 4&  \textcolor{black}{\IZ/4\oplus \IZ/24} & 3.\nspor{Suz}&   13&  \IZ/3\oplus  \IZ/6& 6 &     \textcolor{black}{\IZ/3\oplus \IZ/12}     \\  \cline{2-5}
\spor{M}_{22}&    7&   \IZ/3&  3&     \IZ/6 & 6.\nspor{Suz}&   13& \IZ/3\oplus\IZ/12& 6&     \textcolor{black}{\IZ/6\oplus  \IZ/12}   \\  \cline{6-10} \cline{6-10}
2.\nspor{M}_{22}&  7&   \IZ/6 & 3&   \IZ/2\oplus  \IZ/6 & \spor{ON}&       5&  \IZ/2\oplus\IZ/4& 4&     \textcolor{black}{ \IZ/2\oplus\IZ/8 }  \\  
3.\nspor{M}_{22}&  7&   \IZ/3\oplus\IZ/3&  3&    \IZ/3\oplus\IZ/6  & 3.\nspor{ON}&       5&   \IZ/2\oplus\IZ/4 & 4 &         \cong T(\spor{ON})     \\  \cline{7-10}
4.\nspor{M}_{22}&  7& \IZ/12&  3&       \IZ/2\oplus  \IZ/12 &  \spor{ON}&      11& \IZ/10& 10&        \IZ/20 \\ 
6.\nspor{M}_{22}&  7& \IZ/3\oplus\IZ/6&  3&  \IZ/6\oplus\IZ/6  & 3.\nspor{ON}&    11& \IZ/30& 10 &   \IZ/60   \\   \cline{7-10}
12.\nspor{M}_{22}& 7& \IZ/3\oplus\IZ/12& 3&    \IZ/6\oplus\IZ/12 &\spor{ON}&      19&  \IZ/6& 6&        \IZ/12   \\  \cline{2-5}
\spor{M}_{22}&   11&  \IZ/5&  5&        \IZ/10 &3.\nspor{ON}&    19& \IZ/3\oplus\IZ/6& 6&    \textcolor{black}{\IZ/3\oplus \IZ/12}    \\ \cline{7-10}
2.\nspor{M}_{22}& 11&  \IZ/10&  5&    \IZ/2\oplus  \IZ/10 & \spor{ON}&      31& \IZ/15& 15&    \IZ/30  \\
3.\nspor{M}_{22}& 11&  \IZ/15&  5&       \IZ/30  & 3.\nspor{ON}&    31& \IZ/3\oplus\IZ/15& 15&    \IZ/3\oplus\IZ/30      \\  \cline{6-10} \cline{6-10}
4.\nspor{M}_{22}& 11& \IZ/20&  5&      \IZ/2\oplus \IZ/20   &\spor{Co}_{3}&     7& \IZ/2\oplus\IZ/6& 6&    \textcolor{black}{\IZ/2\oplus \IZ/12}  \\    \cline{7-10}
6.\nspor{M}_{22}& 11& \IZ/30&  5&        \IZ/2\oplus  \IZ/30 & \spor{Co}_{3}&    11& \IZ/10& 5&     \IZ/2\oplus \IZ/10 \\     \cline{7-10}
12.\nspor{M}_{22}& 11& \IZ/60& 5&         \IZ/2\oplus  \IZ/60 & \spor{Co}_{3}&    23& \IZ/11& 11&       \IZ/22\\   \cline{1-10} \cline{1-10}

\spor{J}_{2}&    7&    \IZ/6&  6&      \IZ/12  & \spor{Co}_{2}&     7&  \IZ/2\oplus\IZ/6& 6&  \textcolor{black}{\IZ/2\oplus \IZ/12}   \\    \cline{7-10}
2.\nspor{J}_{2}&  7&  \IZ/12&  6&     \textcolor{black}{\IZ/2\oplus \IZ/12} &  \spor{Co}_{2}&    11& \IZ/10& 10&      \IZ/20    \\  \cline{1-5} \cline{1-5}   \cline{7-10}

\spor{M}_{23}&   5&  \IZ/4& 4&       \IZ/8 & \spor{Co}_{2}&    23& \IZ/11& 11&        \IZ/22  \\   \cline{2-5}  \cline{6-10}\cline{6-10}
\spor{M}_{23}&   7&  \IZ/6& 3&   \IZ/2\oplus\IZ/6 & \spor{Fi}_{22}&    7& \IZ/2\oplus\IZ/6& 6&   \textcolor{black}{\IZ/2\oplus \IZ/12}    \\   \cline{2-5}
\spor{M}_{23}&  11&   \IZ/5& 5&     \IZ/10 & 2.\nspor{Fi}_{22}&  7&\IZ/2\oplus \IZ/2\oplus\IZ/6& 6&    \textcolor{black}{\IZ/2\oplus \IZ/2\oplus \IZ/12}       \\   \cline{2-5}
\spor{M}_{23}&  23&  \IZ/11& 11&      \IZ/22  & 3.\nspor{Fi}_{22}&  7&  \IZ/6\oplus\IZ/6& 6&   \textcolor{black}{\IZ/6\oplus\IZ/12}   \\    \cline{1-5} \cline{1-5} 

\spor{HS}&       7&  \IZ/6 & 6&    \IZ/12  & 6.\nspor{Fi}_{22}&  7&  \IZ/2\oplus \IZ/6\oplus\IZ/6&6&   \textcolor{black}{\IZ/2\oplus \IZ/6\oplus\IZ/12}       \\    \cline{7-10}
2.\nspor{HS}&     7& \IZ/12& 6&    \textcolor{black}{\IZ/2\oplus \IZ/12}   & \spor{Fi}_{22}&   11& \IZ/10& 5&    \IZ/2\oplus\IZ/10      \\  \cline{2-5}
\spor{HS}&      11&   \IZ/5& 5&     \IZ/10  & 2.\nspor{Fi}_{22}& 11&  \IZ/2\oplus\IZ/10& 5&      \IZ/2\oplus\IZ/2\oplus\IZ/10   \\
2.\nspor{HS}&    11& \IZ/10 & 5&  \IZ/2\oplus  \IZ/10 &3.\nspor{Fi}_{22}& 11& \IZ/30& 5&     \IZ/2\oplus \IZ/30    \\     \cline{1-5} \cline{1-5} 

\spor{J}_{3}&    5&  \IZ/2\oplus\IZ/2& 2&     \textcolor{black}{\IZ/2\oplus \IZ/4} &  6.\nspor{Fi}_{22}& 11& \IZ/2\oplus\IZ/30& 5&   \IZ/2\oplus \IZ/2\oplus\IZ/30     \\    \cline{7-10}
3.\nspor{J}_{3}&  5& \IZ/2\oplus\IZ/6& 2&    \textcolor{black}{\IZ/2\oplus \IZ/12}  &\spor{Fi}_{22}&   13&  \IZ/6& 6&   \IZ/12     \\   \cline{2-5}
\spor{J}_{3}&   17&  \IZ/8& 8&       \IZ/16  &2.\nspor{Fi}_{22}& 13& \IZ/2\oplus\IZ/6& 6  & \textcolor{black}{\IZ/2\oplus \IZ/12}   \\
3.\nspor{J}_{3}& 17& \IZ/24& 8&        \IZ/48 & 3.\nspor{Fi}_{22}& 13&  \IZ/3\oplus\IZ/6&  6&     \textcolor{black}{\IZ/3\oplus \IZ/12}   \\  \cline{2-5}
\spor{J}_{3}&   19&  \IZ/9& 9&       \IZ/18 & 6.\nspor{Fi}_{22}& 13&\IZ/6\oplus\IZ/6& 6 &   \textcolor{black}{\IZ/6\oplus \IZ/12}   \\   \cline{6-10} \cline{6-10}
3.\nspor{J}_{3}& 19& \IZ/3\oplus\IZ/9& 9&      \IZ/3\oplus\IZ/18 &  \spor{HN}&         7&  \IZ/6& 6&   \IZ/12      \\     \cline{1-5} \cline{1-5}  \cline{7-10}

\spor{M}_{24}&   5&  \IZ/4& 4&     \IZ/8   &\spor{HN}&        11& \IZ/2\oplus\IZ/10 & 10&    \textcolor{black}{\IZ/2\oplus \IZ/20}     \\ \cline{2-5}  \cline{7-10}
\spor{M}_{24}&   7&  \IZ/6& 3&     \IZ/2\oplus\IZ/6  &\spor{HN}&        19&  \IZ/9& 9&        \IZ/18  \\   \cline{2-5}    \cline{6-10} \cline{6-10}
\spor{M}_{24}&  11& \IZ/10& 10&       \IZ/20 & \spor{Ly}&         7&  \IZ/6& 6&       \IZ/12  \\  \cline{2-5}   \cline{7-10}
\spor{M}_{24}&  23& \IZ/11& 11&       \IZ/22  & \spor{Ly}&        11& \IZ/10& 5&  \IZ/2\oplus\IZ/10  \\   \cline{1-5} \cline{1-5} \cline{7-10}

\spor{McL}&      7&   \IZ/6& 3&     \IZ/2\oplus  \IZ/6 &\spor{Ly}&        31&  \IZ/6& 6&          \IZ/12   \\     \cline{7-10}
3.\nspor{McL}&    7& \IZ/3\oplus \IZ/6& 3 &     \IZ/6\oplus \IZ/6  &\spor{Ly}&        37& \IZ/18& 18&         \IZ/36  \\  \cline{2-5}   \cline{7-10}
\spor{McL}&     11&  \IZ/5& 5&     \IZ/10   & \spor{Ly}&        67& \IZ/22& 22&         \IZ/44  \\   \cline{6-10} \cline{6-10}
3.\nspor{McL}&   11&  \IZ/15& 5&  \IZ/30  & \spor{Th}&        13& \IZ/12& 12&      \IZ/24    \\
\hline
\end{array}\]
\end{table}

\begin{table}[htbp]
\vskip -1pt
\[\begin{array}{|r|r|r|c|c||r|r|r|c|c|}
\hline
 G& p& X(H)& e &T(G) & G& p&  X(H) &e &T(G)  \\
\hline \hline

\spor{Th}&        19& \IZ/18& 18&      \IZ/36  &                                                                                                      	 \spor{Fi}_{24}'&   23&  \IZ/11& 11&          \IZ/22     \\   \cline{2-5}
\spor{Th}&        31& \IZ/15& 15&      \IZ/30 & 												 3.\nspor{Fi}_{24}'& 23& \IZ/33& 11&    \IZ/66 \\   \cline{1-5} \cline{1-5} \cline{7-10}

\spor{Fi}_{23}&     7& \IZ/2\oplus\IZ/6& 6&   \textcolor{black}{\IZ/2\oplus \IZ/12}& 					 \spor{Fi}_{24}'&   29& \IZ/14& 14&       \IZ/28   \\ \cline{2-5}  \cline{7-10}
\spor{Fi}_{23}&    11& \IZ/2\oplus\IZ/10& 10&   \textcolor{black}{\IZ/2\oplus \IZ/20}& 				 3.\nspor{Fi}_{24}'& 29& \IZ/42& 14&        \IZ/84   \\ \cline{2-5}    \cline{6-10} \cline{6-10}
\spor{Fi}_{23}&    13& \IZ/2\oplus\IZ/6& 6&   \textcolor{black}{\IZ/2\oplus \IZ/12}& 					 \spor{B}&          11& \IZ/2\oplus\IZ/10& 10&     \textcolor{black}{\IZ/2\oplus \IZ/20}    \\ \cline{2-5}
\spor{Fi}_{23}&    17& \IZ/16& 16&    \IZ/32&											 2.\nspor{B}&       11&   \IZ/2\oplus\IZ/10& 10 &    \cong T(\spor{B})     \\ \cline{2-5}    \cline{7-10}
\spor{Fi}_{23}&    23& \IZ/11& 11&    \IZ/22& 											 \spor{B}&          13& \IZ/2\oplus\IZ/12& 12&     \textcolor{black}{\IZ/2\oplus \IZ/24}  \\   \cline{1-5} \cline{1-5}

\spor{Co}_{1}&       11&  \IZ/2\oplus\IZ/10& 10&   \textcolor{black}{\IZ/2\oplus \IZ/20}& 				 2.\nspor{B}&       13& \IZ/2\oplus\IZ/12  & 12 &   \cong T(\spor{B})   \\  \cline{7-10}
2.\nspor{Co}_{1}& 11&  \IZ/2\oplus\IZ/10& 10&  \cong T(\spor{Co}_{1})&								\spor{B}&          17& \IZ/2\oplus\IZ/16& 16&       \textcolor{black}{\IZ/2\oplus \IZ/32}   \\ \cline{2-5}  
\spor{Co}_{1}&       13& \IZ/12& 12&     \IZ/24&											 2.\nspor{B}&       17&   \IZ/2\oplus\IZ/16 & 16 &    \cong T(\spor{B})     \\   \cline{7-10}
2.\nspor{Co}_{1}& 13&\IZ/12 &12 &  \cong T(\spor{Co}_{1})&										\spor{B}&          19& \IZ/2\oplus\IZ/18& 18&      \textcolor{black}{\IZ/2\oplus \IZ/36}   \\ \cline{2-5}
\spor{Co}_{1}&       23& \IZ/11& 11&     \IZ/22&											 2.\nspor{B}&       19&  \IZ/2\oplus\IZ/18& 18 &    \cong T(\spor{B})    \\   \cline{7-10}
2.\nspor{Co}_{1}&     23& \IZ/22& 11&   \IZ/2\oplus\IZ/22 &									 \spor{B}&          23& \IZ/22& 11&   \IZ/2\oplus\IZ/22    \\   \cline{1-5} \cline{1-5}

\spor{J}_4&         5&  \IZ/4& 4&    \IZ/8&													2.\nspor{B}&       23& \IZ/2\oplus\IZ/22& 11&     \IZ/2\oplus \IZ/2\oplus\IZ/22    \\ \cline{2-5}   \cline{7-10}  \cline{7-10}
\spor{J}_4&         7&  \IZ/6& 3&    \IZ/2\oplus\IZ/6 &   										\spor{B}&          31& \IZ/15& 15&           \IZ/30    \\ \cline{2-5}
\spor{J}_4&        23& \IZ/22& 22&        \IZ/44& 											2.\nspor{B}&       31&  \IZ/30& 15&            \IZ/2\oplus  \IZ/30 \\ \cline{2-5}   \cline{7-10}
\spor{J}_4&        29& \IZ/28& 28&        \IZ/56& 											\spor{B}&          47&  \IZ/23& 23&           \IZ/46 \\ \cline{2-5}
\spor{J}_4&        31& \IZ/10& 10&        \IZ/20& 											2.\nspor{B}&       47&    \IZ/46& 23&          \IZ/2\oplus  \IZ/46 \\ \cline{2-5}   \cline{6-10} \cline{6-10}
\spor{J}_4&        37& \IZ/12& 12&        \IZ/24&											\spor{M}&          17& \IZ/16& 16&       \IZ/32    \\ \cline{2-5}   \cline{7-10}
\spor{J}_4&        43& \IZ/14& 14&        \IZ/28&											\spor{M}&          19& \IZ/18& 18&            \IZ/36   \\ \cline{1-5} \cline{1-5}   \cline{7-10}

\spor{Fi}_{24}'&   11& \IZ/10& 10&           \IZ/20&											\spor{M}&          23&  \IZ/22& 11&       \IZ/2\oplus  \IZ/22  \\   \cline{7-10}
3.\nspor{Fi}_{24}'&  11&  \IZ/30& 10&     \IZ/60&  							\spor{M}&          29&  \IZ/28& 28&        \IZ/56    \\   \cline{2-5}   \cline{7-10}
\spor{Fi}_{24}'&   13& \IZ/2\oplus\IZ/12& 12&       \textcolor{black}{\IZ/2\oplus \IZ/24}& 				\spor{M}&          31& \IZ/30& 15&         \IZ/2\oplus \IZ/30   \\     \cline{7-10}
3.\nspor{Fi}_{24}'& 13& \IZ/2\oplus\IZ/12 & 12 &      \cong T(\spor{Fi}_{24}')& 							\spor{M}&          41&  \IZ/40& 40&        \IZ/80 \\  \cline{2-5}   \cline{7-10}
\spor{Fi}_{24}'&   17&  \IZ/16& 16&           \IZ/32     & 										\spor{M}&          47&  \IZ/46& 23&     \IZ/2\oplus \IZ/46      \\   \cline{7-10}
3.\nspor{Fi}_{24}'& 17&  \IZ/48& 16&          \IZ/96  &  						\spor{M}&          59&  \IZ/29& 29&      \IZ/58  \\   \cline{7-10}
& & & & & 																	\spor{M}&          71& \IZ/35& 35&      \IZ/70    \\   \cline{7-10}
 \hline
\end{array}\]
\end{table}
}


\bibliographystyle{alpha}
\bibliography{biblio.bib}


\end{document}